\documentclass[a4paper, notitlepage, oneside, reqno]{amsart}
\usepackage{lmodern} 
\usepackage{amsmath,amsfonts,amssymb}
\usepackage{amsthm} 
\usepackage{graphicx} 
\usepackage{subfig}
\usepackage{listings}
\usepackage{geometry}
\usepackage{hyperref}
\usepackage{wrapfig}
\usepackage{url}
\usepackage{tikz}
\usetikzlibrary{shapes, positioning, arrows}
\usepackage{eso-pic}
\usepackage{mathrsfs}
\usepackage{xcolor}
\usepackage{mathtools}
\usepackage{multirow}

\newcommand{\dx}{\, \mathrm{d}x}

\newcommand{\dxi}{\, \mathrm{d}\xi}

\newcommand{\R}{\mathbb{R}}
\newcommand{\Z}{\mathbb{Z}}
\newcommand{\N}{\mathbb{N}}

\newcommand{\Q}{\mathcal{Q}}

\newcommand{\E}{\mathcal{E}}
\newcommand{\J}{\mathcal{J}}

\newcommand{\U}{\mathcal{U}}

\newtheorem{theorem}{Theorem}[section]
\newtheorem{lemma}[theorem]{Lemma}
\newtheorem{corollary}[theorem]{Corollary}

\theoremstyle{definition}

\newtheorem{remark}[theorem]{Remark}
\numberwithin{equation}{section}

\title{Existence of solitary-wave solutions to nonlocal equations}
\author{Mathias Nikolai Arnesen}
\date{}
\thanks{The author gratefully acknowledges the support of the project Nonlinear water waves (Grant No. 231668) from the Research Council of Norway.}

\begin{document}

\begin{abstract}
We prove existence and conditional energetic stability of solitary-wave solutions for the two classes of pseudodifferential equations 
\begin{equation*}
u_t+\left(f(u)\right)_x-\left(L u\right)_x=0
\end{equation*}
and
\begin{equation*}
u_t+\left(f(u)\right)_x+\left(L u\right)_t=0,
\end{equation*}
where $f$ is a nonlinear term, typically of the form $c|u|^p$ or $cu|u|^{p-1}$, and $L$ is a Fourier multiplier operator of positive order. The former class includes for instance the Whitham equation with capillary effects and the generalized Korteweg-de Vries equation, and the latter the Benjamin-Bona-Mahony equation. Existence and conditional energetic stability results have earlier been established using the method of concentration-compactness for a class of operators with symbol of order $s\geq 1$. We extend these results to symbols of order $0<s<1$, thereby improving upon the results for general operators with symbol of order $s\geq 1$ by enlarging both the class of linear operators and nonlinearities admitting existence of solitary waves. Instead of using abstract operator theory, the new results are obtained by direct calculations involving the nonlocal operator $L$, something that gives us the bounds and estimates needed for the method of concentration-compactness.
\end{abstract}

\maketitle

\section{Introduction}
\label{introduction}
In this paper we discuss solitary-wave solutions of pseudodifferential equations of the form
\begin{equation}
\label{eq: main equation}
u_t+\left(f(u)\right)_x-\left(L u\right)_x=0
\end{equation}
or
\begin{equation}
\label{eq: main equation 2}
u_t+\left(f(u)\right)_x+\left(L u\right)_t=0,
\end{equation}
where $u$ and $f$ are real-valued functions, and $L$ is a Fourier multiplier operator with symbol $m$ of order $s>0$. That is,
\begin{equation*}
\widehat{L u}(\xi)=m(\xi)\widehat{u}(\xi),
\end{equation*}
where the hat denotes the Fourier transform $\widehat{f}(\xi)=\int_{\R} \mathrm{e}^{-2\pi ix\xi} f(x) \dx$ with respect to the spatial coordinate, and $m$ is a function satisfying
\begin{align*}
& A_1 |\xi|^s\leq m(\xi)\leq A_2|\xi|^s, \quad |\xi|\geq 1, \\
& 0\leq m(\xi)\leq A_2, \quad |\xi|\leq 1,
\end{align*}
for some constants $A_1,A_2>0$. Our inspiration comes from \cite{EGW} and a series of recent papers on nonlinear dispersive equations with weak \cite{Linares2014dpo} or very weak \cite{Lannes2014rot} dispersion. This includes investigations into existence \cite{EEP}, stability \cite{Johnson2014mii,Klein2008ama} and travelling waves \cite{Ehrnstrom2013gbf} of the Whitham equation. We mention here that our results yield existence of solitary-waves to the capillary Whitham equation (see \cite{Lannes2014rot}), a case not earlier covered in the literature \cite{albert,zeng,guo}.

A solitary-wave is a travelling wave of the form $u(x,t)=u(x-ct)$, where $c>0$ is the speed of the wave moving from left to right, that vanishes as $x-ct\rightarrow \pm \infty$. Assuming that $u$ is a solitary-wave solution of \eqref{eq: main equation} or \eqref{eq: main equation 2}, we obtain the following equations by integrating \eqref{eq: main equation} or \eqref{eq: main equation 2}, respectively, with respect to the spatial variable:
\begin{equation}
\label{eq: main equation2}
L u+cu-f(u)=0
\end{equation}
and
\begin{equation}
\label{eq: main equationZ}
c(L u+u)-f(u)=0.
\end{equation}
For studying existence and stability of solutions to \eqref{eq: main equation2} and \eqref{eq: main equationZ} by variational methods one can consider constrained variational problems (see equations \eqref{eq: constrained var. prob. 1} and \eqref{eq: Gamma}). The loss of compactness that results from working in the unbounded domain $\R$ is overcome by the method of concentration-compactness as introduced in \cite{lions}. The main challenge in applying the concentration-compactness method is usually, particularly in the nonlocal case, to preclude dichotomy (cf. Lemma \ref{Lions}), for which one needs a result like Theorem \ref{thm: dichotomy convergence} to hold for the operator $L$.


Albert, Bona and Saut \cite{abs} prove existence of solitary-wave solutions to the Kubota-Ko-Dobbs equation, which belongs to the class of equations \eqref{eq: main equation} with an operator of order $s=1$, and their approach is presented in a more general form in \cite{albert}. The results are remarked to hold for any nonlinearity $f(u)=|u|^p$, $p\in (1,2s+1)$ or $f(u)=u^p$, $p\in \N\cap (1,2s+1)$ and any operator $L$ with symbol $m$ of order $s\geq 1$ satisfying 
\begin{equation}
\label{eq: commutator}
\|L (\theta f)-\theta L (f)\|_{L^2}\leq C\|\theta'\|_{L^\infty}\|f\|_{L^2},
\end{equation}
for any function $\theta$ and $f\in C_0^\infty$. Using general commutator estimates (\cite[Theorem 35]{coifman}), \eqref{eq: commutator} leads the authors of \cite{abs} and \cite{albert} to impose the condition
\begin{equation}
\label{eq: abs condition on m}
\left|\left(\frac{\mathrm{d}}{\mathrm{d}\xi}\right)^n\left( \frac{m(\xi)}{\xi}\right) \right|\leq C|\xi|^{-n} \quad \text{for all} \,\, \xi\in \R \,\, \text{and} \,\, n\in \N,
\end{equation}
for some constant $C>0$. This condition is never satisfied when $s>1$ or $0<s<1$. By a splitting argument, Zeng \cite{zeng} establishes a similar inequality to \eqref{eq: commutator} for all operators with symbol $m$ such that \eqref{eq: abs condition on m} is satisfied for $n\in \lbrace 0,1,2,3,4\rbrace$ with $m(\xi)/\xi$ replaced by $(m(\xi)-m(0))/\xi$ when $|\xi|\leq 1$ and by $\sqrt{m(\xi)}/|\xi|^{s/2}$ when $|\xi|\geq 1$. This also excludes symbols of order $0<s<1$, but allows one to consider operators of order $s>1$, for instance the fractional Laplace operator $(-\Delta)^{s/2}$ where $m(\xi)=|\xi|^s$. Zeng \cite{zeng} does this for equations of the form \eqref{eq: main equationZ} for nonlinearities satisfying Assumption (B) (see the Assumptions below), but this argument can easily be implement in the method of \cite{albert} to extend the results of that paper for \eqref{eq: main equation2} to operators satisfying the assumptions of \cite{zeng}.

For pseudodifferential operators of order $0<s<1$, however, the only known result is, to the author's knowledge, the recent publication \cite{guo}, which proves existence of solitary-wave solutions to \eqref{eq: main equation} for $L=(-\Delta)^{s/2}$ ($m(\xi)=|\xi|^s$) and $f(u)=c_p u|u|^{p-1}$, where $p\in (1,2s+1)$. That result was achieved by using a commutator estimate that has only been established for the fractional Laplace operator. The authors of \cite{lenzmann} remark that the method of Weinstein \cite{weinstein}, used to prove solitary-wave solutions of \eqref{eq: main equation} and \eqref{eq: main equation 2} when $s\geq 1$, holds equally well when $0<s<1$. While it is true that the method in \cite{weinstein} can be modified to prove the existence for $s\in (0,1)$ (which is one of the results in this paper), as noted in \cite{abs} and proved in the Appendix, further care needs to be taken to the nonlocal part of the problem than what is done in \cite{weinstein}; in particular, equation (3.20) in \cite{weinstein} does not hold in general. In this paper we will establish Theorem \ref{thm: dichotomy convergence} by direct calculation without any reference to general results on commutator estimates. This allows us to treat all operators of any order $s>0$ that satisfies natural and easy to check assumptions (see Assumption (A)). Moreover, in the Appendix we prove that our assumptions are, almost if not completely, as weak as they can be: under weaker assumptions, the method of concentration compactness cannot be applied.

The structure of the paper is as follows: In Section \ref{results} we state and describe our assumptions and results in detail. The main result on the existence of solitary-wave solutions, Theorem \ref{thm: existence}, will be proved in three parts, using the method of concentration-compactness, in Sections \ref{existence}, \ref{existence2} and \ref{different nonlinearities}. In Section \ref{stable regular}, Theorem \ref{thm: stability} concerning the stability of the sets of solutions is proved, as well as a result on the regularity of solutions. Lastly, in the Appendix, we prove by counter-example the necessity of a continuity assumption on the symbol $m$ in order to obtain compactness from the concentration-compactness method. The general outline of the procedure in Sections \ref{existence}, \ref{existence2} and \ref{different nonlinearities} is inspired primarily by \cite{zeng}, and also by \cite{albert}. While \cite{zeng} works only with \eqref{eq: main equation 2} and \cite{albert} with \eqref{eq: main equation}, we will relate the variational formulation \eqref{eq: Gamma} of \eqref{eq: main equationZ} to solutions of \eqref{eq: main equation2} using a scaling argument from \cite{weinstein}. This allows us to extend the range of nonlinearities for which we have existence of solutions to \eqref{eq: main equation2}.

For $1\leq p\leq \infty$ and measurable sets $\Omega\subseteq \R$ we will write $L^p(\Omega)$ for the usual Banach spaces with norm $\|f\|_{L^p(\Omega)}=\left( \int_{\Omega}|f|^p\dx\right)^{1/p}$ if $1\leq p<\infty$, and $\|f\|_{L^\infty (\Omega)}=\mathrm{ess} \,\mathrm{supp}_{x\in \Omega}|f(x)|$. The ambient space is always $\R$ and we will for convenience write $L^p$ for $L^p(\R)$. Similarly, we denote by $H^s$ the Hilbert space $H^s(\R)$ with norm $\|f\|_{H^s}=\left( \int_{\R}(1+|\xi|^2)^s|\widehat{f}(\xi)|^2\dxi\right)^{1/2}$.

\section{Assumptions and main results}
\label{results}
In this section we will state our fundamental assumptions and describe our results. The precise, technical details of our results are contained in Theorems \ref{thm: existence} and \ref{thm: stability}, while a simpler summary of which nonlinearities we have existence and stability of solitary-wave solutions for is given in Table \ref{table 1}.
\begin{itemize}
\item[(A)] The operator $L$ is a Fourier multiplier with symbol $m$ of order $s>0$. That is,
\begin{equation*}
\widehat{L u}(\xi)=m(\xi)\widehat{u}(\xi),
\end{equation*}
where $m$ satisfies
\begin{align}
\label{assumption on m}
& A_1|\xi|^s\leq m(\xi)\leq A_2|\xi|^s \,\, \text{for} \,\, |\xi|\geq 1, \nonumber \\
& 0\leq m(\xi)\leq A_2 \,\, \text{for} \,\, |\xi|\leq 1,
\end{align}
for some constants $A_1,A_2>0$. Furthermore, we assume that $m$ is piecewise continuous with finitely many discontinuities and that there exists a $K>0$ such that for all $|\xi|>K$ and $|t|\ll 1$ such that $m$ is continuous on $(\xi-t,\xi)$,
\begin{equation}
\label{assumption on m 2}
|m(\xi)-m(\xi-t)|\leq |k(t)||\xi|^s,
\end{equation}
where $\lim_{t\rightarrow 0} k(t)=0$.
\item[(B)] The nonlinearity $f$ is of one of the forms:
\begin{itemize}
\item[(B1)] $f(u)=c_p u|u|^{p-1}$ where $c_p> 0$,
\item[(B2)] $f(u)=c_p |u|^p$ where $c_p\neq 0$,
\end{itemize}
where either $p\in (1,2s+1)$ or $p\in (1,\frac{1+s}{1-s})$. When $s\geq 1$, $p\in (1,\frac{1+s}{1-s})$ should be interpreted as $p\in (1,\infty)$.
\end{itemize}
The assumption \eqref{assumption on m} in (A) is to ensure that $\left(\int_{\R} u L u +u^2\dx\right)^{1/2}$ is an equivalent norm to the standard norm on $H^{s/2}$. The continuity assumption is essential for proving Theorem \ref{thm: dichotomy convergence} which is necessary in order to exclude dichotomy, and in the Appendix we will show that a continuity assumption is necessary. The two different forms (B1) and (B2) of the nonlinearity are considered to cover both the case when the sign of $u$ does affect the sign of $f$ and when it does not, generalizing the cases when $p$ is, respectively, an odd or an even integer. The two ranges of $p$ are related to stability and existence. For e.g. the generalized Korteweg-de Vries equation (where $s=1$), it is known that $p=2s+1$ is the critical exponent beyond which one loses stability, while one has existence for all $p\in (1,\infty)$ (see, for instance, \cite{Angulo2009nde}).

To state our results, let $F$ be the primitive of $f$. That is,
\begin{equation}
\label{def: F}
F(x):=\left\{
	\begin{array}{l l}
		c_p\frac{|x|^{p+1}}{p+1}, & \quad \text{if}\,\, f(x)=c_px|x|^{p-1},\\
		c_p\frac{x|x|^p}{p+1}, & \quad \text{if}\,\, f(x)=c_p|x|^p.
	\end{array} \right.
\end{equation}
As one can check (or see e.g. Lemma 1 in \cite{abh}), if $u$ solves equation \eqref{eq: main equation} with initial condition $u(x,0)=\psi(x)$ for all $x\in \R$ where $\psi\in H^r$, $r\geq s/2$, the functionals
\begin{equation}
\label{eq: E}
\E(u)=\frac{1}{2}\int_{\R}uL u\dx-\int_{\R}F(u)\dx
\end{equation}
and
\begin{equation*}
\Q(u)=\frac{1}{2}\int_{\R}u^2\dx
\end{equation*}
are independent of $t$. Likewise, the functionals
\begin{equation*}
\J(u)=\frac{1}{2}\int_{\R} \left(uL u+u^2\right)\dx
\end{equation*}
and
\begin{equation*}
\U(u)=\int_{\R}F(u)\dx
\end{equation*}
are invariant in time for solutions of equation \eqref{eq: main equation 2}. Furthermore, the Lagrange multiplier principle (cf. \cite{zeidler}) implies that, for every $q>0$, minimizers of the constrained variational problem
\begin{equation}
\label{eq: constrained var. prob. 1}
I_q:=\inf\lbrace \E(w) : w\in H^{s/2} \,\, \text{and}\,\, \mathcal{Q}(w)=q\rbrace
\end{equation}
solve equation \eqref{eq: main equation2} with $c$ being the Lagrange multiplier. We denote by $D_q$ the set of minimizers of $I_q$. Equation \eqref{eq: constrained var. prob. 1} is the variational problem studied in \cite{albert} for a class of symbols with $s=1$, and as we shall show the results of \cite{albert} can be extended to hold for all operators satisfying assumption (A). This formulation, however, has the disadvantage that $I_q$ is unbounded below when $p\geq 2s+1$, and so the range of $p$ for which one can find minimizers is restricted to $(1,2s+1)$. One would expect a change in behaviour at the critical exponent $p=2s+1$, as with the GKdV equation as mentioned above, but one would also expect existence, if not stability, beyond the critical exponent. This is indeed the case, as we will show. For any $\lambda>0$, equation \eqref{eq: main equationZ} is the Euler-Lagrange equation of the constrained variational problem
\begin{equation}
\label{eq: Gamma}
\Gamma_\lambda= \inf \lbrace \J(w) : w\in H^{s/2} \,\, \text{and} \,\, \U(w)=\lambda\rbrace.
\end{equation}
For all $p\in (1, \frac{1+s}{1-s})$, one can show that $\Gamma_{\lambda}$ is well defined. The Lagrange multiplier principle implies that if $u$ is a minimizer of $\Gamma_\lambda$, then there exists a $\gamma$ such that
\begin{equation*}
L u+u -\gamma f(u)=0
\end{equation*}
in a weak sense, here meaning that
\begin{equation*}
\int_{\R} \left(L u+u+\gamma f(u)\right)\varphi\dx=0
\end{equation*}
for all $\varphi\in H^{s/2}$. Hence $u$ solves equation \eqref{eq: main equationZ} with $c=1/\gamma$. If we define
\begin{equation}
\label{eq: J_kappa}
\J_{\kappa}=\frac{1}{2}\int_{\R} uL u + \kappa u^2\dx=\frac{1}{2}\int_\R (m(\xi)+\kappa)|\widehat{u}(\xi)|^2\dxi,
\end{equation}
where $\kappa\in \R$, we get that minimizers of $\Gamma_\lambda=\Gamma_\lambda(\kappa)$, which now depends on $\kappa$ (we will generally omit this from the notation where it is clear from the context), solve
\begin{equation*}
L u+ \kappa u -\gamma f(u)=0.
\end{equation*}
In order for $\J_\kappa$ to be non-negative on $H^{s/2}$, we need $\kappa>-\inf_{\xi\in \R} m(\xi)$.
Letting
\begin{equation}
\label{eq: scale 1}
\beta^{-1}v=u,  \quad \beta^{p-1}=\gamma
\end{equation}
one gets that $v$ solves  \eqref{eq: main equation2} with wave-speed $c=\kappa$. We will denote by $G_\lambda(\kappa)$ the set of minimizers of $\Gamma_\lambda(\kappa)$. For equation \eqref{eq: main equationZ} one can consider $\J(=\J_1)$ again and let
\begin{equation}
\label{eq: scale 2}
\beta^{-1}v=u,  \quad \beta^{p-1}=\kappa\gamma
\end{equation}
for $\kappa>0$. Then $v$ will be a solution to \eqref{eq: main equationZ} with wave speed $c=\kappa$. This is equivalent to consider $\kappa\J$ instead of $\J_\kappa$ in \eqref{eq: Gamma}, which in turn is equivalent to scaling $\lambda$ by some factor. Thus every wave speed $c>0$ can be attained as (the reciprocal of) the Lagrange multiplier by varying $\lambda$.

As in \cite{zeng}, we will also consider inhomogeneous nonlinearities of the form $g(u)=u+f(u)$, where $f$ satisfies Assumption (B). For solitary-wave solutions of \eqref{eq: main equation}, the difference between homogeneous nonlinearities $f$ and inhomogeneous nonlinearities $g(u)=u+f(u)$ is trivial. A variational formulation in terms of conserved quantities is given by minimizing $\E-\Q$ in place of $\E$ in \eqref{eq: constrained var. prob. 1}, which clearly makes no difference for the existence of minimizers. And every element of $G_\lambda(\kappa)$ will be a solitary-wave solution with wave speed $c=\kappa+1$ upon scaling as in \eqref{eq: scale 1}. For \eqref{eq: main equation 2} it is more complicated. Equation \eqref{eq: main equation 2} in this case becomes
\begin{equation}
\label{eq: secondary eq}
u_t+u_x+f(u)_x+\left(L u\right)_t=0
\end{equation}
and solitary-wave solutions satisfy
\begin{equation}
\label{eq: secondary eq 2}
c L u+(c-1)u-f(u)=0.
\end{equation}
For $\kappa>0$ such that $1-1/\kappa>-\inf_{\xi\in \R} m(\xi)$, every element of $G_\lambda(1-1/\kappa)$ will be a solution to \eqref{eq: secondary eq 2} with wave speed $c=\kappa$ upon scaling as in \eqref{eq: scale 2}. The functional $\J_{1-1/\kappa}$ is, however, not a preserved quantity for \eqref{eq: secondary eq}, nor is the functional $\U$, and we are therefore not able to prove stability of the set of minimizers. We consider instead $\J(=\J_1)$, set
\begin{equation*}
\tilde{\U}(u)=\int_{\R}\left(\frac{u^2}{2}+F(u)\right)\dx
\end{equation*}
and look for minimizers of
\begin{equation*}
\tilde{\Gamma}_{\lambda}=\inf \lbrace \J(w) : w\in H^{s/2} \,\, \text{and} \,\, \tilde{\U}(w)=\lambda\rbrace.
\end{equation*}
We denote by $\tilde{G}_\lambda$ the set of minimizers of $\tilde{\Gamma}_{\lambda}$. By the Lagrange multiplier principle, any element of $\tilde{G}_\lambda$ will be a solution of \eqref{eq: secondary eq 2} with ${c=1/\gamma}$, where $\gamma$ is the Lagrange multiplier. Furthermore, the functionals $\J$ and $\tilde{\U}$ are preserved quantities for \eqref{eq: secondary eq} and we can therefore prove stability for the set of minimizers of $\tilde{\Gamma}_\lambda$ (cf. Theorem \ref{thm: stability} and Section \ref{stable regular}). Note that since $\tilde{\U}$ is inhomogeneous, the scaling arguments performed on the elements of $G_{\lambda}$ in order to choose the wave speed cannot be performed for the minimizers of $\tilde{\Gamma}_{\lambda}$; we will only get the wave speeds given by the Lagrange multiplier principle. Moreover, existence of minimizers of $\tilde{\Gamma}_\lambda$ can only be established for $\lambda>\lambda_0$ for some $\lambda_0\geq 0$ whose precise value is unknown. Equation \eqref{eq: secondary eq} can thus be said to have more in common with \eqref{eq: main equation} than \eqref{eq: main equation 2} in that fixing the wave speed comes at the cost of stability. The precise details of our main results on existence and stability of solitary-wave solutions are contained in Theorem \ref{thm: existence} and Theorem \ref{thm: stability} below.

\begin{theorem}[Existence of solitary-wave solutions]
\label{thm: existence}
Assume $L$ satisfies Assumption (A) and $f$ satisfies Assumption (B). Then:

\begin{itemize}
	\item[(i)] If $p\in(1,2s+1)$, there is a number $q_0\geq 0$ such that set $D_q$ of minimizers of $I_q$ is non-empty for any $q>q_0$, and every element of $D_q$ is a solution to \eqref{eq: main equation2} with the wave speed $c$ being the Lagrange multiplier in this constrained variational problem. If, in addition to (A), $m(\xi)$ satisfies $0\leq m(\xi)\leq A_2 |\xi|^s$ for $|\xi|\leq 1$, then $q_0=0$.
	
	\item[(ii)] If $p\in (1,\frac{1+s}{1-s})$, the set $G_\lambda=G_\lambda(\kappa)$ of minimizers of $\Gamma_\lambda=\Gamma_\lambda(\kappa)$ is non-empty for any $\lambda>0,\kappa>-\inf_{\xi\in \R} m(\xi)$, and if $f$ satisfies (B2), then this is true also for $\lambda<0$. If $\kappa=1$, then every element of $G_\lambda$ solves \eqref{eq: main equationZ} with the wave speed $c$ being the reciprocal of the Lagrange multiplier in this constrained variational problem, and by varying the parameter $\lambda$ one can get any wave speed $c>0$. Moreover, scaling the set $G_\lambda(\kappa)$ as in \eqref{eq: scale 1}, or the set $G_\lambda(1-1/\kappa)$ (for $k>0$ such that $1-1/\kappa>-\inf_{\xi\in \R}m(\xi)$) as in \eqref{eq: scale 2}, every element will be a solution to \eqref{eq: main equation2} or \eqref{eq: secondary eq 2}, respectively, with wave speed $c=\kappa$.

	\item[(iii)] If $p\in (1,\frac{1+s}{1-s})$, there exists a $\lambda_0\geq 0$ such that the set $\tilde{G}_\lambda$ of minimizers of $\tilde{\Gamma}_\lambda$ is non-empty for any $\lambda>\lambda_0$, and every element of $\tilde{G}_\lambda$ is a solution to \eqref{eq: secondary eq 2} with the wave speed $c$ being the reciprocal of the Lagrange multiplier in this constrained variational problem. If, in addition to (A), $m(\xi)$ satisfies $0\leq m(\xi)\leq A_2 |\xi|^s$ for $|\xi|\leq 1$, then $\lambda_0=0$ for $p\in(1,2s+1)$.
\end{itemize}
Moreover, if $\lbrace u_n\rbrace_n\subset H^{s/2}$ is a minimizing sequence of $I_q$,  $\Gamma_\lambda(\kappa)$ or $\tilde{\Gamma}_\lambda$, under the conditions of (i), (ii) or (iii), respectively, then there exists a sequence $\lbrace y_n\rbrace\subset \R$ such that a subsequence of $\lbrace u_n(\cdot +y_n)\rbrace_n$ converges in $H^{s/2}$ to an element of $D_q$, $G_\lambda(\kappa)$ or $\tilde{G}_\lambda$, respectively. Furthermore, $D_q,G_\lambda(\kappa),\tilde{G}_\lambda\subset H^{s}$.
\end{theorem}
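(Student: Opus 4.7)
The plan is to apply the concentration-compactness principle of Lions to a minimizing sequence for each of the three variational problems. I describe the strategy for $I_q$ in detail; the arguments for $\Gamma_\lambda$ and $\tilde\Gamma_\lambda$ proceed by the same scheme with modifications.

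First I would establish that $I_q$ is finite, negative, and that every minimizing sequence $\{u_n\}$ is bounded in $H^{s/2}$. The upper bound $I_q<0$ follows by inserting a suitably rescaled test function (where, in case (B2), the sign of $u$ is chosen so that $\int F(u)>0$). The lower bound and the boundedness of $\{u_n\}$ come from a Gagliardo--Nirenberg interpolation estimating $\int F(u)$ in terms of $\|u\|_{L^2}$ and $\|u\|_{H^{s/2}}$, using the norm equivalence $\|u\|_{H^{s/2}}^2\asymp \int uLu\,dx+\int u^2\,dx$ provided by Assumption (A). The subcriticality hypothesis $p<2s+1$ is precisely what makes this interpolation controllable by Young's inequality. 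Setting $\rho_n:=|u_n|^2$, Lemma \ref{Lions} then yields three alternatives for $\{\rho_n\}$: vanishing, dichotomy, or compactness up to translation. Vanishing is excluded because a Lions-type concentration lemma would force $\|u_n\|_{L^{p+1}}\to 0$, contradicting $I_q<0$.

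The heart of the matter is excluding dichotomy. If $u_n$ splits as $u_n^{(1)}+u_n^{(2)}$ with supports at growing distance and $L^2$-masses approaching a partition $q=q_1+q_2$, then Theorem \ref{thm: dichotomy convergence} gives
\[
\int (u_n^{(1)}+u_n^{(2)})\, L(u_n^{(1)}+u_n^{(2)})\,dx
=\int u_n^{(1)} L u_n^{(1)}\,dx + \int u_n^{(2)} L u_n^{(2)}\,dx + o(1),
\]
so the quadratic part of $\E$ splits additively in the limit, yielding $I_q\geq I_{q_1}+I_{q_2}$. Strict subadditivity $I_q<I_{q_1}+I_{q_2}$, on the other hand, follows from the scaling $w\mapsto \tau^{1/2}w$: since $I_q<0$, the map $q\mapsto I_q$ satisfies $I_{\tau q}<\tau I_q$ for $\tau>1$, which implies strict subadditivity of $q\mapsto I_q$. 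These two inequalities are incompatible, ruling out dichotomy. This is the principal obstacle, and it is where the piecewise continuity of $m$ is essential: in the nonlocal setting the splitting of $\int uLu\,dx$ across well-separated pieces is far from automatic, which is the content of Theorem \ref{thm: dichotomy convergence}.

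Compactness then provides a translation sequence $\{y_n\}$ for which $u_n(\cdot+y_n)$ is tight in $L^2$ and converges weakly in $H^{s/2}$ to some $u$. Tightness combined with local Rellich--Kondrachov compactness upgrades this to strong convergence in $L^{p+1}$, so $\int F(u_n(\cdot+y_n))\to \int F(u)$; weak lower semicontinuity of the quadratic form then forces $u\in D_q$, and the convergence of all terms upgrades to strong convergence in $H^{s/2}$. The Lagrange multiplier principle produces \eqref{eq: main equation2}. Parts (ii) and (iii) run analogously on $(\J,\U)$ and $(\J,\tilde{\U})$: the enlarged range $p<\tfrac{1+s}{1-s}$ is the Sobolev threshold at which $H^{s/2}\hookrightarrow L^{p+1}$ still gives finiteness of $\Gamma_\lambda$, and the scalings \eqref{eq: scale 1}--\eqref{eq: scale 2} convert minimizers into solitary waves of any prescribed speed. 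The inhomogeneity of $\tilde{\U}$ obstructs the scaling argument for strict subadditivity when $p>2s+1$, producing the threshold $\lambda_0\geq 0$, which vanishes when $p<2s+1$ since then the homogeneous scaling already suffices. Finally, the regularity $D_q,G_\lambda,\tilde G_\lambda\subset H^s$ follows from a bootstrap on the Euler--Lagrange equation $(L+c)u=f(u)$: the symbol bound $m(\xi)+c\gtrsim 1+|\xi|^s$ makes $(L+c)^{-1}$ gain $s$ derivatives, so iterating from the embedding $H^{s/2}\hookrightarrow L^{p+1}$ successively improves the integrability of $f(u)$ and delivers $u\in H^s$.
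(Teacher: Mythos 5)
Your proposal follows essentially the same concentration-compactness route as the paper, correctly identifying Theorem \ref{thm: dichotomy convergence} as the key nonlocal input, the scaling argument for strict subadditivity, and the bootstrap for $H^s$-regularity. One technical caveat on the dichotomy step: the paper does not prove that the cross term $\int u_n^{(1)} L u_n^{(2)}\,dx$ vanishes, as your displayed formula would require; instead it takes $u_n^{(1)}=\varphi_n u_n$, $u_n^{(2)}=\psi_n u_n$ with $\varphi^2+\psi^2\equiv 1$ (so $u_n\neq u_n^{(1)}+u_n^{(2)}$), writes $\int u_n^{(1)} L u_n^{(1)}\,dx+\int u_n^{(2)} L u_n^{(2)}\,dx=\int u_n L u_n\,dx+(\text{two commutators})$, and uses Theorem \ref{thm: dichotomy convergence} only as a bilinear commutator pairing estimate, never touching the cross term. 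Similarly, for the $\Gamma_\lambda$ and $\tilde\Gamma_\lambda$ problems the constraint fixes $\U$ rather than $\Q$, so the sequence fed to Lions' lemma is $\rho_n=\kappa u_n^2+(L^{1/2}u_n)^2$ rather than $u_n^2$; your ``same scheme with modifications'' covers this, but the choice of $\rho_n$ is where the modification actually bites.
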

The theorem above covers for instance the Capillary Whitham equation, for which solitary waves moving to the right with speed $c>0$ are described by the equation
\begin{equation}
\label{eq: capillary whitham}
	Lu-cu+u^2=0,
\end{equation}
where $m(\xi)=\sqrt{\frac{(1+\beta \xi^2)\tanh(\xi)}{\xi}}$, with $\beta>0$ being the strength of the surface tension. For any $\beta>0$, $m$ satisfies (A) with $s=1/2$, and $\inf_{\xi\in \R} m(\xi)=\min_{\xi \in \R}m(\xi)>0$ and $f(u)=-u^2$ satisfies (B2). Hence the set $G_\lambda(\kappa)$ is non-empty for any $\kappa>-\inf_{\xi\in \R} m(\xi)$, and in particular, for $-\inf_{\xi\in \R} m(\xi)<\kappa<0$, we get solutions to \eqref{eq: capillary whitham} with wave-speed $c=-\kappa>0$.

\begin{remark}
\label{rem: remark on scaling}
Scaling elements of $G_\lambda(\kappa)$ in order to choose the wave speed comes at the cost of losing information about the quantity $\U(u)$ for solutions $u$. For given energy $\U(u)=\lambda$ one faces the opposite problem, that the wave speed $c$ is given as the reciprocal of the Lagrange multiplier which one cannot directly control. However, the Lagrange multiplier $\gamma$, which is the unknown factor in the scalings \eqref{eq: scale 1} and \eqref{eq: scale 2}, can be expressed in terms of the quantities $\lambda$, $p$ and $\Gamma_\lambda$ as follows (see Section \ref{existence}):
\begin{equation}
\label{eq: lagrange multiplier}
\gamma=\frac{2\Gamma_\lambda}{(p+1)\lambda}.
\end{equation}
This expression illustrates at least the relationship between the different quantities $c$, $\lambda$ and $\Gamma_\lambda$.
\end{remark}

\begin{theorem}[Conditional energetic stability]
\label{thm: stability}
The sets $D_q$, (any positive scaling of) $G_\lambda(1)$ with $\kappa=1$ and $\tilde{G}_\lambda$ are, under the conditions in Theorem \ref{thm: existence} (i), (ii) and (iii) respectively, stable sets for the initial value problems of \eqref{eq: main equation}, \eqref{eq: main equation 2} and \eqref{eq: secondary eq}, respectively, in the following sense as described for $D_q$: For every $\varepsilon>0$ there exists $\delta>0$ such that if
\begin{equation*}
\inf_{w\in D_q}\|u_0-w\|_{H^{s/2}}<\delta,
\end{equation*}
where $u(x,t)$ solves \eqref{eq: main equation} with $u(x,0)=u_0(x)$, then
\begin{equation*}
\inf_{w\in D_q}\|u(\cdot,t)-w\|_{H^{s/2}}<\varepsilon
\end{equation*}
for all $t\in \R$.
\end{theorem}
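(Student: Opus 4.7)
The proof plan is the standard Cazenave--Lions stability argument, leveraging the compactness-up-to-translation property of minimizing sequences established in Theorem~\ref{thm: existence}. I will work out the $D_q$ case in detail and indicate the (essentially identical) modifications for $G_\lambda$ and $\tilde{G}_\lambda$.

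Argue by contradiction. Suppose stability of $D_q$ fails: then there exist $\varepsilon_0 > 0$, a sequence of initial data $\{u_{0,n}\}\subset H^{s/2}$ with $\inf_{w\in D_q}\|u_{0,n}-w\|_{H^{s/2}}\to 0$, and times $t_n\in\R$ such that
\begin{equation*}
\inf_{w\in D_q}\|u_n(\cdot,t_n)-w\|_{H^{s/2}}\geq \varepsilon_0,
\end{equation*}
where $u_n$ solves \eqref{eq: main equation} with initial datum $u_{0,n}$ (well-posedness in $H^{s/2}$ is the conditional hypothesis). Pick $w_n\in D_q$ with $\|u_{0,n}-w_n\|_{H^{s/2}}\to 0$. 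Since $\E$ and $\Q$ are continuous on $H^{s/2}$ (using the $H^{s/2}$-boundedness of $L^{1/2}$ and the Sobolev embedding $H^{s/2}\hookrightarrow L^{p+1}$ which controls $F(u)$), one has $\Q(u_{0,n})\to \Q(w_n)=q$ and $\E(u_{0,n})\to \E(w_n)=I_q$. By the conservation laws along \eqref{eq: main equation},
\begin{equation*}
\Q(u_n(\cdot,t_n))=\Q(u_{0,n})\to q,\qquad \E(u_n(\cdot,t_n))=\E(u_{0,n})\to I_q.
\end{equation*}

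Now set $v_n := \alpha_n u_n(\cdot,t_n)$ with $\alpha_n := \sqrt{q/\Q(u_n(\cdot,t_n))} \to 1$, so that $\Q(v_n)=q$ exactly. Continuity of $\E$ and the estimate $\|v_n - u_n(\cdot,t_n)\|_{H^{s/2}}=|\alpha_n-1|\,\|u_n(\cdot,t_n)\|_{H^{s/2}}\to 0$ (the $H^{s/2}$-norms are uniformly bounded thanks to $\E+\Q$ control combined with the Sobolev interpolation of $\int F(u)$ against $\|u\|_{H^{s/2}}$, which is exactly the coercivity estimate used in Section~\ref{existence}) give $\E(v_n)\to I_q$. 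Hence $\{v_n\}$ is a minimizing sequence for $I_q$, and Theorem~\ref{thm: existence} produces a sequence of translations $\{y_n\}\subset\R$ and a subsequence with $v_n(\cdot+y_n)\to v_\ast$ in $H^{s/2}$ for some $v_\ast\in D_q$. Since $\|u_n(\cdot,t_n)-v_n\|_{H^{s/2}}\to 0$ and translations are $H^{s/2}$-isometries,
\begin{equation*}
\inf_{w\in D_q}\|u_n(\cdot,t_n)-w\|_{H^{s/2}} \leq \|u_n(\cdot,t_n+\,\cdot\,)\text{ translated}-v_\ast\|_{H^{s/2}}\longrightarrow 0,
\end{equation*}
contradicting $\inf_{w\in D_q}\|u_n(\cdot,t_n)-w\|_{H^{s/2}}\geq \varepsilon_0$.

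For $G_\lambda(1)$ with \eqref{eq: main equation 2}, I repeat the argument with the conserved functionals $\J$ and $\U$ in place of $\E$ and $\Q$, rescaling by a factor close to $1$ (the $(p+1)$-th root of $\lambda/\U(u_n(\cdot,t_n))$) to land on the constraint $\U = \lambda$; the positive-scaling versions $G_{\lambda}(\kappa)$ follow because the scalings in \eqref{eq: scale 1} are $H^{s/2}$-bicontinuous bijections of the sets in question. For $\tilde{G}_\lambda$, the same reasoning applies using the conservation of $\J$ and $\tilde{\U}$ along \eqref{eq: secondary eq}. The main obstacle is the rescaling step for $\tilde{G}_\lambda$: because $\tilde{\U}$ is inhomogeneous, one cannot restore the constraint $\tilde{\U}(v_n) = \lambda$ by a simple multiplicative rescaling, but this is remedied by scaling in the spatial variable (replacing $v_n(x)$ by $v_n(\mu_n x)$ with $\mu_n\to 1$) and showing that the resulting change in $\J$ is $o(1)$; this uses continuity of $\xi\mapsto m(\xi)$ at almost every point (guaranteed by Assumption (A)) and dominated convergence in the Fourier-side expression for $\J$. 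Once the minimizing-sequence property is established, Theorem~\ref{thm: existence} closes the argument exactly as before.
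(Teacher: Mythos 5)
Your proof is correct and follows essentially the same Cazenave--Lions contradiction argument as the paper: use conservation of $\Q$ and $\E$ (resp.\ $\J$ and $\U$, $\J$ and $\tilde\U$), rescale to land exactly on the constraint, and invoke the compactness-up-to-translation of minimizing sequences from Theorem~\ref{thm: existence}. The only difference is that the paper chooses the normalizing factor $\alpha_n$ from the initial data $\varphi_n$ rather than from $u_n(\cdot,t_n)$ (the two coincide by conservation of $\Q$), and the paper leaves the rescaling step for $G_\lambda$ and $\tilde G_\lambda$ implicit (``the proofs \dots are equivalent''); your observation that the inhomogeneity of $\tilde\U$ precludes a simple multiplicative rescaling, and your fix via spatial dilation together with piecewise continuity of $m$, is a genuine and welcome refinement that the paper glosses over.
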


\begin{remark}
While the upper bounds $2s+1$ and $\frac{1+s}{1-s}$ on $p$ appear in the proof of Theorem \ref{thm: existence} by appealing to Sobolev embedding and interpolation theorems rather than from the equations themselves, they are, in fact, strictly related to existence and stability of solitary-wave solutions. In \cite{Linares2014dpo}, it is proven that for $m(\xi)=|\xi|^s$ and $p=2$, there are no non-trivial solutions to \eqref{eq: main equation2} if $s<1/3$. If $s=1/3$, then $p=2$  is the upper bound $\frac{1+s}{1-s}$. Their arguments can easily be generalized to show that if $p>\frac{1+s}{1-s}$, for any $s>0$, there are no solutions to \eqref{eq: main equation2}. Similarly, as already mentioned one has instability for $p>2s+1$ for equations like the GKdV, and this limitation on $p$ is therefore also not due to any limitations of the proofs presented in this paper.
\end{remark}

The following table summarizes the essential content of Theorems \ref{thm: existence} and \ref{thm: stability} in terms of which nonlinearities one has existence for, and for which one has stability, for equations \eqref{eq: main equation}, \eqref{eq: main equation 2} and \eqref{eq: secondary eq} (here L. multiplier is short for Lagrange multiplier).
\begin{table}[h]
\begin{tabular}{|l|l|l|l|}
\hline
Equation & Wave speed & Existence & Stability \\ \hline
\multirow{2}{*}{\eqref{eq: main equation}} & any $c>0$ & $p\in (1,\frac{1+s}{1-s})$ & \\ \cline{2-4}
& L. multiplier & $p\in (1,2s+1)$ & $p\in (1,2s+1)$ \\ \hline
\eqref{eq: main equation 2} & any $c>0$/ & $p\in (1, \frac{1+s}{1-s})$ & $p\in (1, \frac{1+s}{1-s})$ \\ 
 & L. multiplier & & \\ \hline
\multirow{2}{*}{\eqref{eq: secondary eq}} & any $c>1$ & $p\in (1, \frac{1+s}{1-s})$ & \\ \cline{2-4}
& L. multiplier & $p\in (1, \frac{1+s}{1-s})$ & $p\in (1, \frac{1+s}{1-s})$ \\
\hline
\end{tabular}
\caption{Ranges of existence and stability of solitary-wave solutions of \eqref{eq: main equation}, \eqref{eq: main equation 2} and \eqref{eq: secondary eq} in terms of the exponent $p$ of the nonlinearity. The Lagrange multipliers come from variational formulations in terms of conserved quantities, while "any" $c$ is obtained through scaling arguments.} 
\label{table 1}
\end{table}

\begin{remark}
Recall that existence and stability of solitary-waves for equation $u_t+u_x+\left(f(u)\right)_x-\left(L u\right)_x=0$ is equivalent to that of $u_t+\left(f(u)\right)_x-\left(L u\right)_x=0$ (see the discussion leading up to Theorem \ref{thm: existence}).
\end{remark}

We end the section by stating the concentration-compactness lemma that will be the main ingredient in the sequel:

\begin{lemma}[Lions \cite{lions}]
\label{Lions}
Let $\lbrace \rho_n\rbrace_n\subset L^1$ be a sequence that satisfies
\begin{align*}
\rho_n & \geq 0 \,\, \text{a.e. on} \,\, \R,\\
\int_\R \rho_n\dx &=\mu
\end{align*}
for a fixed $\mu>0$ and all $n\in \N$. Then there exists a subsequence $\lbrace \rho_{n_k}\rbrace_k$ that satisfies one of the three following properties:
\begin{itemize}
\item[(1)] (Compactness). There exists a sequence $\lbrace y_k\rbrace_k\subset \R$ such that for every $\varepsilon>0$, there exists $r<\infty$ satisfying for all $k\in \N$:
\begin{equation*}
\int_{y_k-r}^{y_k+r}\rho_{n_k}(x)\dx\geq \mu-\varepsilon.
\end{equation*}
\item[(2)] (Vanishing). For all $r<\infty$,
\begin{equation*}
\lim_{k\rightarrow\infty}\sup_{y\in \R}\int_{y-r}^{y+r}\rho_{n_k}\dx=0
\end{equation*}
\item[(3)] (Dichotomy). There exists $\bar{\mu}\in (0,\mu)$ such that for every $\varepsilon>0$ there exists a natural number $k_0\geq 1$ and two sequences of positive $L^1$ functions $\lbrace \rho_k^{(1)}\rbrace_k, \lbrace \rho_k^{(2)}\rbrace_k$ satisfying for $k\geq k_0$,
\begin{align}
\label{eq: dichotomy}
&\|\rho_{n_k}-(\rho_k^{(1)}+\rho_k^{(2)})\|_{L^1}\leq \varepsilon, \nonumber \\
&|\int_\R \rho_k^{(1)}\dx-\bar{\mu}|\leq \varepsilon, \\
&|\int_\R \rho_k^{(2)}\dx-(\mu-\bar{\mu})|\leq \varepsilon, \nonumber \\
& \mathrm{dist}(\mathrm{supp}(\rho_k^{(1)}),\mathrm{supp}(\rho_k^{(2)}))\rightarrow \infty. \nonumber
\end{align}
\end{itemize}
\end{lemma}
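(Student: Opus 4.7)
The plan is to follow Lions's original argument via a Lévy-style concentration function. For each $n$ and $t \geq 0$ I define
\begin{equation*}
Q_n(t) := \sup_{y \in \R} \int_{y-t}^{y+t} \rho_n(x) \dx,
\end{equation*}
which is nondecreasing in $t$, bounded above by $\mu$, and satisfies $Q_n(t) \to \mu$ as $t \to \infty$ by monotone convergence. Since $\{Q_n\}$ is a uniformly bounded sequence of nondecreasing functions on $[0, \infty)$, Helly's selection theorem extracts a subsequence $\{Q_{n_k}\}$ converging pointwise to a nondecreasing function $Q : [0, \infty) \to [0, \mu]$. Setting $\bar{\mu} := \lim_{t \to \infty} Q(t) \in [0, \mu]$, the three alternatives will correspond to $\bar{\mu} = 0$, $\bar{\mu} = \mu$, and $0 < \bar{\mu} < \mu$ respectively.

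If $\bar{\mu} = 0$, then $Q \equiv 0$ forces $Q_{n_k}(t) \to 0$ for every $t$, which is precisely the vanishing alternative (2). If $\bar{\mu} = \mu$, then given $\varepsilon > 0$ I pick $r$ with $Q(r) > \mu - \varepsilon/2$; for $k$ large enough $Q_{n_k}(r) > \mu - \varepsilon$, and some center $y_k$ then realises $\int_{y_k - r}^{y_k + r} \rho_{n_k} \geq \mu - \varepsilon$. A diagonal construction over a sequence $\varepsilon_j \downarrow 0$ yields a single sequence $\{y_k\}$ and, for each $\varepsilon$, an admissible radius $r$, giving the compactness alternative (1).

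The dichotomy case $0 < \bar{\mu} < \mu$ is the main obstacle. Fix $\varepsilon \in (0, \min(\bar{\mu}, \mu - \bar{\mu}))$. I first choose $R$ so that $Q(R) > \bar{\mu} - \varepsilon/2$, then use $Q_{n_k}(R) \to Q(R)$ together with $Q(t) \leq \bar{\mu}$ for all $t$ to find $k_0$ with $Q_{n_k}(R) \in (\bar{\mu} - \varepsilon, \bar{\mu} + \varepsilon/2)$ for $k \geq k_0$, and pick $y_k$ so that $\int_{y_k - R}^{y_k + R} \rho_{n_k} \geq \bar{\mu} - \varepsilon$. The crucial step is to select a divergent scale $R_k \to \infty$ with
\begin{equation*}
\int_{y_k - R_k}^{y_k + R_k} \rho_{n_k}(x) \dx \leq \bar{\mu} + \varepsilon,
\end{equation*}
which I achieve by fixing an increasing sequence $T_j \to \infty$, noting that for each $j$ one has $Q_{n_k}(T_j) \to Q(T_j) \leq \bar{\mu}$, and setting $R_k := T_{j(k)}$ with $j(k) \to \infty$ slowly enough to preserve $Q_{n_k}(R_k) \leq \bar{\mu} + \varepsilon$. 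Defining
\begin{equation*}
\rho_k^{(1)} := \rho_{n_k}\,\mathbf{1}_{[y_k - R,\, y_k + R]}, \qquad \rho_k^{(2)} := \rho_{n_k}\,\mathbf{1}_{\R \setminus [y_k - R_k,\, y_k + R_k]},
\end{equation*}
the annular leftover mass is at most $(\bar{\mu} + \varepsilon) - (\bar{\mu} - \varepsilon) = 2\varepsilon$, the supports satisfy $\dist(\supp \rho_k^{(1)}, \supp \rho_k^{(2)}) \geq R_k - R \to \infty$, and the three mass estimates in \eqref{eq: dichotomy} follow up to universal multiples of $\varepsilon$ which are absorbed by replacing $\varepsilon$ by $\varepsilon/C$ at the start. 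The delicate point is coordinating the Helly-subsequence, the centers $y_k$, and the radii $R_k$ through carefully nested diagonal arguments so that previously extracted convergences are not destroyed.
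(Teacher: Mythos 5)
The paper does not prove this lemma; it is stated as a cited result from Lions \cite{lions}, so there is no in-paper proof to compare against. Your proposal reconstructs the standard proof from Lions' original work via the concentration (L\'evy) function $Q_n(t)=\sup_y\int_{y-t}^{y+t}\rho_n$, Helly's selection theorem, and the trichotomy on $\bar\mu=\lim_{t\to\infty}Q(t)$. That is the right approach and the dichotomy estimates you write down do close correctly: the annular leftover is bounded by $Q_{n_k}(R_k)-Q_{n_k}(R)\le 2\varepsilon$, the masses of $\rho_k^{(1)}$, $\rho_k^{(2)}$ land within $\varepsilon$ of $\bar\mu$, $\mu-\bar\mu$, and the support separation $R_k-R\to\infty$ is clear.

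One place where the write-up is a bit looser than the standard argument deserves flagging: in the compactness case ($\bar\mu=\mu$), producing a \emph{single} sequence of centers $\{y_k\}$ that works for all $\varepsilon$ is not achieved by a generic diagonal over $\varepsilon_j\downarrow 0$. The actual mechanism is a mass-overlap observation. Fix $R_0$ with $Q(R_0)>\mu/2$ and let $y_k$ (nearly) realise $Q_{n_k}(R_0)$. For $\varepsilon<\mu/2$ and $R$ with $Q_{n_k}(R)>\mu-\varepsilon$ realised at some center $y_k'$, the intervals $[y_k-R_0,y_k+R_0]$ and $[y_k'-R,y_k'+R]$ must intersect, since otherwise the total mass would exceed $\mu$; hence $|y_k-y_k'|\le R+R_0$ and the enlarged interval $[y_k-(2R+R_0),y_k+(2R+R_0)]$ already captures mass $\ge\mu-\varepsilon$. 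This pins the centers uniformly in $\varepsilon$ without any further subsequence extraction. Apart from that one point the proposal is correct, and since the paper simply invokes Lions, your proof is supplying detail the paper itself omits.
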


\begin{remark}
\label{rem: Lions}
The condition $\int_{\R}\rho_n\dx=\mu$ can be replaced by $\int_{\R}\rho_n\dx=\mu_n$ where $\mu_n\rightarrow\mu$ (see \cite{chen}).
\end{remark}
\section{Concentration-compactness for \eqref{eq: Gamma}}
\label{existence}
The variational problem
\begin{equation}
\Gamma_\lambda= \inf \lbrace \J_\kappa(w) : w\in H^{s/2} \,\, \text{and} \,\, \U(w)=\lambda\rbrace.
\end{equation}
is equivalent to the one considered in \cite{weinstein}, where it was arrived at by first considering the functional
\begin{equation*}
J(u) =\frac{\frac{1}{2}\int_{\R}\left( uL u+\kappa u^2\right)\dx}{\left(\int_{\R}F(u)\dx\right)^{\frac{2}{p+1}}},
\end{equation*}
for some constant $\kappa >0$ and noting that it is invariant under the scaling $u\mapsto \theta u$ for $\theta\neq 0$. As minimizers of the constrained variational problem then also minimize the unconstrained functional over $H^{s/2}$, one can ascertain some a-priori information about the sign and size of the wave speed $1/\gamma$ in terms of the quantities $p$, $\lambda$ and $\Gamma_\lambda$. We henceforth assume $p\in (1,\frac{1+s}{1-s})$, so that by the Sobolev embedding theorem, $\int_\R F(u)\dx$ is finite for all $u\in H^{s/2}$.

Assume now that $u$ is a minimizer of $\Gamma_\lambda$. Then
\begin{equation*}
\frac{\mathrm{d}}{\mathrm{d}t} J(u+t\varphi)\vert_{t=0}=0,
\end{equation*}
for all $\varphi\in H^{s/2}$. Calculating the derivative, we get
\begin{align*}
\int_{\R} & \varphi L u + \kappa \varphi u\dx \left(\int_{\R} F(u)\dx\right)^{\frac{2}{p+1}} \\
&-\frac{1}{p+1}\int_{\R} uL u+\kappa u^2\dx\left( \int_{\R} F(u)\dx\right)^{\frac{1-p}{p+1}} \int_{\R}\varphi f(u)\dx=0.
\end{align*}
That is,
\begin{equation*}
L u+\kappa u-\frac{1}{p+1}\int_{\R} uL u+\kappa u^2\dx \left(\int_{\R} F(u)\dx\right)^{-1} f(u)=0.
\end{equation*}
Thus, if $\kappa =1$ and $\lambda>0$, minimizers of $\Gamma_\lambda$ will be solutions of equation \eqref{eq: main equationZ} with wave speed $\frac{(p+1)\lambda}{2 \Gamma_\lambda}>0$. Moreover, this establishes the expression for $\gamma$ given in Remark \ref{rem: remark on scaling}.

Now we turn to the existence of minimizers of \eqref{eq: Gamma}, which we will prove using concentration-compactness arguments. As $u\in H^{s/2}$ implies $F(u)\in L^1$ and we fix $\U(u)=\lambda$, it would be natural for a minimizing sequence $\lbrace u_n\rbrace_n$ of $\Gamma_\lambda$ to apply Lemma \ref{Lions} to $\lbrace F(u_n)\rbrace_n$ as in \cite{weinstein}. Unfortunately, $F(u)$ does not satisfy the non-negativity criterion for all nonlinearities $f$ we would like to consider. Nor does the other natural candidate $uL u+\kappa u^2$. We therefore replace $u L u$ with a non-negative term the integral of which (over $\R$) is equal to that of $u L u$. We define the operator $L^{\frac{1}{2}}$ by replacing $m$ with $\sqrt{m}$ in Assumption (A), and let
\begin{align*}
\rho_n&=\kappa u_n^2+\left(L^\frac{1}{2}u_n\right)^2, \\ \mu_n&=\int_{\R}\rho_n\dx.
\end{align*}
Thus $\rho_n\geq0$ and there exist $k_1, k_2>0$ depending on $\kappa$ such that
\begin{equation}
\label{eq: upper and lower bound on mu}
k_1\|u_n\|_{H^{s/2}}^2\leq \int_{\R}\rho_n\dx\leq k_2\|u_n\|_{H^{s/2}}^2.
\end{equation}
In order to apply Lemma \ref{Lions} we will need the following lemma.
\begin{lemma}
\label{lem: lower bound on Hs norm}
If $\lbrace u_n\rbrace_n$ is a minimizing sequence of $\Gamma_\lambda$, then there exists $M>0$ and $N>0$ such that $N\leq \|u_n\|_{H^{s/2}}\leq M$ for all $n$. Furthermore, $\Gamma_\lambda>0$.
\end{lemma}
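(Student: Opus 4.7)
The plan is to get the upper bound on $\|u_n\|_{H^{s/2}}$ from the norm equivalence between $\J_\kappa$ and $\|\cdot\|_{H^{s/2}}^2$, and the strictly positive lower bound (along with $\Gamma_\lambda>0$) from a Sobolev embedding applied to the constraint $\U(u_n)=\lambda\neq 0$.

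For the upper bound, Assumption (A) together with $\kappa>0$ yields the pointwise two-sided estimate
\begin{equation*}
c_1(1+|\xi|^2)^{s/2}\le m(\xi)+\kappa\le c_2(1+|\xi|^2)^{s/2},
\end{equation*}
obtained by checking the two regimes $|\xi|\le 1$ (where $\kappa$ provides a positive lower bound) and $|\xi|\ge 1$ (where the order-$s$ bounds on $m$ take over). By Plancherel this translates to $c_1\|u\|_{H^{s/2}}^2\le 2\J_\kappa(u)\le c_2\|u\|_{H^{s/2}}^2$ for every $u\in H^{s/2}$. Since the minimizing sequence has $\J_\kappa(u_n)\to\Gamma_\lambda<\infty$, the upper bound $M$ follows at once.

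For the lower bound, use $|F(x)|\le\tfrac{|c_p|}{p+1}|x|^{p+1}$ together with the Sobolev embedding $H^{s/2}(\R)\hookrightarrow L^{p+1}(\R)$, which holds throughout the admissible range $p\in(1,\tfrac{1+s}{1-s})$: for $s<1$ the endpoint $p+1=\tfrac{2}{1-s}$ is exactly the subcritical cutoff, while for $s\ge 1$ one has $H^{s/2}\hookrightarrow L^2\cap L^\infty$ and interpolates. The constraint then gives
\begin{equation*}
0<|\lambda|=\Big|\int_{\R}F(u_n)\dx\Big|\le\frac{|c_p|}{p+1}\|u_n\|_{L^{p+1}}^{p+1}\le C\|u_n\|_{H^{s/2}}^{p+1},
\end{equation*}
so $\|u_n\|_{H^{s/2}}\ge N:=(|\lambda|/C)^{1/(p+1)}>0$.

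The same chain of estimates applies to every admissible $u$, not only to members of a minimizing sequence, so $\J_\kappa(u)\ge\tfrac{c_1}{2}\|u\|_{H^{s/2}}^2\ge\tfrac{c_1}{2}N^2>0$ uniformly on the constraint set, which yields $\Gamma_\lambda\ge\tfrac{c_1}{2}N^2>0$. I do not anticipate any genuine obstacle here; the only mildly delicate point is verifying the Sobolev embedding across the full range of $p$, and noting that $\lambda\neq 0$ in every case (including $\lambda<0$, which is granted only under (B2) by Theorem \ref{thm: existence}(ii)) so the lower-bound argument is not vacuous.
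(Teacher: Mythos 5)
Your proposal is correct and follows essentially the same route as the paper: upper bound from the norm equivalence of $\left(\J_\kappa(\cdot)\right)^{1/2}$ with $\|\cdot\|_{H^{s/2}}$ (which you verify by the two-regime pointwise comparison of $m(\xi)+\kappa$ with $(1+|\xi|^2)^{s/2}$, exactly what Assumption (A) plus $\kappa>0$ yields), lower bound from the constraint $\U(u_n)=\lambda\neq 0$ combined with the Sobolev embedding $H^{s/2}\hookrightarrow L^{p+1}$, and $\Gamma_\lambda>0$ as an immediate consequence of applying the lower bound uniformly on the constraint set. The one small improvement worth keeping is that you write $|\lambda|\le\int_{\R}|F(u_n)|\dx\le\frac{|c_p|}{p+1}\|u_n\|_{L^{p+1}}^{p+1}$ rather than the paper's terse equality $\int_{\R}|F(u)|\dx=\lambda$, which is exact only in case (B1) where $F\geq 0$; your inequality covers both (B1) and the sign-changing case (B2), including $\lambda<0$.
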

\begin{proof}
Noting that Assumption (A) implies that for any $\kappa>-\inf_{\xi\in \R} m(\xi)$, $\left(\J_\kappa(\cdot)\right)^{1/2}$ defines a norm on $H^{s/2}$ equivalent to the standard norm, the upper bound follows trivially from the boundedness of $\lbrace \J_\kappa(u_n)\rbrace_n\subset \R$. Similarly, the lower bound is a consequence of $\int_{\R} |F(u)|\dx= \lambda$ and the Sobolev embedding theorem. That $\Gamma_\lambda>0$ is an immediate consequence of the lower bound.
\end{proof}
By \eqref{eq: upper and lower bound on mu} and Lemma \ref{lem: lower bound on Hs norm}, for any minimizing sequence $\lbrace u_n\rbrace_n\subset H^{s/2}$ of $\Gamma_\lambda$, the sequence $\lbrace \mu_n\rbrace_n\subset \R$ as defined above will be bounded. Moreover, $\mu_n>0$ for all $n$. Thus there exists a number $\mu>0$ and a subsequence of $\lbrace \rho_n\rbrace_n$, still denoted by $\lbrace\rho_n\rbrace_n$, such that ${\int_\R \rho_n\rightarrow \mu}$. By Remark \ref{rem: Lions}, Lemma \ref{Lions} then applies to the sequence $\lbrace \rho_n\rbrace_n$ and there exists a subsequence, still denoted by $\lbrace \rho_n\rbrace_n$, for which either compactness, vanishing or dichotomy holds. In what follows we will eliminate vanishing and dichotomy. To this purpose, we will first establish some structural properties of $\Gamma_\lambda$ considered as a function of $\lambda$, as well as some general properties of minimizing sequences for $\Gamma_\lambda$.

We start with the following Lemma from \cite{zeng} (Lemma 2.9).
\begin{lemma}
\label{lem: Zeng 3}
If $\lambda_2>\lambda_1>0$, then $\Gamma_{\lambda_2}\geq \Gamma_{\lambda_1}$.
\end{lemma}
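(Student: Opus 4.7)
The plan is to exploit the homogeneity of the two functionals under the pointwise rescaling $u \mapsto \theta u$ with $\theta > 0$. Since $L$ is linear, $\J_\kappa$ is quadratic, so $\J_\kappa(\theta u) = \theta^2 \J_\kappa(u)$. A direct inspection of the definition \eqref{def: F} shows that in both cases (B1) and (B2) one has $F(\theta x) = \theta^{p+1} F(x)$ for $\theta > 0$, and hence $\U(\theta u) = \theta^{p+1} \U(u)$ for every $u \in H^{s/2}$ and every $\theta > 0$.

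Given this, I pick a minimizing sequence $\{u_n\}_n \subset H^{s/2}$ for $\Gamma_{\lambda_2}$, so that $\U(u_n) = \lambda_2$ and $\J_\kappa(u_n) \to \Gamma_{\lambda_2}$. Setting
\[
\theta := \left(\frac{\lambda_1}{\lambda_2}\right)^{\frac{1}{p+1}} \in (0,1), \qquad v_n := \theta u_n \in H^{s/2},
\]
the homogeneity identities yield $\U(v_n) = \theta^{p+1}\lambda_2 = \lambda_1$, so each $v_n$ is admissible for $\Gamma_{\lambda_1}$, and $\J_\kappa(v_n) = \theta^2 \J_\kappa(u_n)$. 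Passing to the limit gives
\[
\Gamma_{\lambda_1} \leq \theta^2 \Gamma_{\lambda_2} = \left(\frac{\lambda_1}{\lambda_2}\right)^{\frac{2}{p+1}} \Gamma_{\lambda_2} < \Gamma_{\lambda_2},
\]
where the strict inequality uses $\lambda_1 < \lambda_2$ together with $\Gamma_{\lambda_2} > 0$, which is Lemma \ref{lem: lower bound on Hs norm}. Running the argument with $\theta' = (\lambda_2/\lambda_1)^{1/(p+1)} > 1$ in the other direction actually gives the matching upper bound and hence the exact scaling relation $\Gamma_\lambda = C\lambda^{2/(p+1)}$, but for the present lemma only the one-sided inequality is needed.

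There is no genuine obstacle in this argument; the only point requiring verification is the homogeneity $F(\theta x) = \theta^{p+1} F(x)$ for $\theta > 0$ in both cases. In (B1) this is immediate since $F$ is a positive multiple of $|x|^{p+1}$, while in (B2) one uses $(\theta x)|\theta x|^p = \theta^{p+1} x|x|^p$ for $\theta > 0$. Positivity of $\lambda_1$ and $\lambda_2$ is what lets us restrict to $\theta > 0$ throughout, so no sign bookkeeping is required.
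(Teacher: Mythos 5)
Your proof is correct, and it takes a cleaner, more explicit route than the paper's. The paper does not compute the scaling factor: it picks a near-minimizer $\varphi$ for $\Gamma_{\lambda_2}$ and invokes the intermediate value theorem (using only continuity of $a\mapsto\U(a\varphi)$ and $\U(0)=0$) to produce some unspecified $C\in(0,1)$ with $\U(C\varphi)=\lambda_1$, then uses $\J_\kappa(C\varphi)=C^2\J_\kappa(\varphi)<\J_\kappa(\varphi)\leq\Gamma_{\lambda_2}+\varepsilon$ and sends $\varepsilon\to 0$. You instead exploit the exact $(p{+}1)$-homogeneity of $F$ for $\theta>0$ (which you correctly verify in both cases (B1) and (B2)), so $\theta=(\lambda_1/\lambda_2)^{1/(p+1)}$ is computed explicitly and the constant $C^2$ is pinned down as $(\lambda_1/\lambda_2)^{2/(p+1)}$. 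What the paper's route buys is robustness: it would go through verbatim if $\U$ were merely continuous with $\U(0)=0$ rather than exactly homogeneous, which is the relevant generality in, e.g., Section \ref{different nonlinearities}, where $\tilde\U$ is inhomogeneous. What your route buys is a sharper conclusion: the two-sided version of your argument yields the exact scaling law $\Gamma_\lambda=\Gamma_1\,\lambda^{2/(p+1)}$, and since $2/(p+1)\in(0,1)$ the map $\lambda\mapsto\lambda^{2/(p+1)}$ is strictly subadditive on $(0,\infty)$, so Lemma \ref{lem: Zeng 4} would in fact follow immediately from your observation without the separate argument the paper gives. One minor point: for the present lemma only $\Gamma_{\lambda_1}\leq\Gamma_{\lambda_2}$ is asserted, so invoking $\Gamma_{\lambda_2}>0$ (Lemma \ref{lem: lower bound on Hs norm}) to upgrade to strict inequality is unnecessary, though harmless.
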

\begin{proof}
For any $\varepsilon>0$, there exists a function $\varphi\in H^{s/2}$ such that $\U(\varphi)=\lambda_2$ and $\J_{\kappa}(\varphi)\leq \Gamma_{\lambda_2}+\varepsilon$. Since $\U(a\varphi)$ is a continuous function of $a\in \R$, then by the intermediate value theorem we can find $C\in (0,1)$ such that $\U(C\varphi)=\lambda_1$. Hence
\begin{equation*}
\Gamma_{\lambda_1}\leq \J_{\kappa}(C\varphi)=C^2\J_{\kappa}(\varphi)< \J_{\kappa}(\varphi)<\Gamma_{\lambda_2}+\varepsilon.
\end{equation*}
This proves the result.
\end{proof}

\begin{lemma}
\label{lem: Zeng 4}
For $\lambda>0$ and any $\alpha\in (0,\lambda)$,
\begin{equation*}
\Gamma_\lambda<\Gamma_{\lambda-\alpha}+\Gamma_\alpha.
\end{equation*}
\end{lemma}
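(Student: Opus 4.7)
The plan is to show that $\Gamma_\lambda$ is a pure power of $\lambda$ and then deduce strict subadditivity from the strict concavity of that power. Both functionals $\J_\kappa$ and $\U$ scale nicely: $\J_\kappa(a u)=a^2\J_\kappa(u)$ for every $a\in\R$ because $\J_\kappa$ is quadratic, and from the explicit form \eqref{def: F} one checks that $F(au)=a^{p+1}F(u)$ for every $a>0$ under either (B1) or (B2), so $\U(au)=a^{p+1}\U(u)$ for $a>0$.

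Given $\lambda,\mu>0$ and any $v\in H^{s/2}$ with $\U(v)=\mu$, set $u=(\lambda/\mu)^{1/(p+1)}v$. Then $\U(u)=\lambda$ and $\J_\kappa(u)=(\lambda/\mu)^{2/(p+1)}\J_\kappa(v)$, so taking infima gives $\Gamma_\lambda\leq(\lambda/\mu)^{2/(p+1)}\Gamma_\mu$. Swapping $\lambda$ and $\mu$ yields the reverse inequality, hence
\begin{equation*}
\Gamma_\lambda=(\lambda/\mu)^{2/(p+1)}\Gamma_\mu\quad\text{for all }\lambda,\mu>0.
\end{equation*}
Equivalently, $\Gamma_\lambda=K\lambda^{2/(p+1)}$ for some constant $K$, and $K>0$ by Lemma \ref{lem: lower bound on Hs norm}.

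Since $p>1$, the exponent $q:=2/(p+1)$ lies in $(0,1)$, so $t\mapsto t^q$ is strictly concave on $[0,\infty)$ and vanishes at $0$; this immediately gives strict subadditivity, i.e. $(a+b)^q<a^q+b^q$ for $a,b>0$. Applying this with $a=\alpha$ and $b=\lambda-\alpha$ and multiplying by $K>0$ yields $\Gamma_\lambda<\Gamma_\alpha+\Gamma_{\lambda-\alpha}$, as claimed. The only point requiring any checking is the homogeneity $\U(au)=a^{p+1}\U(u)$ under positive scalars for both nonlinearity types (B1) and (B2); this is a direct computation from \eqref{def: F}, so no real obstacle arises.
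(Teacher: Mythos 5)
Your proof is correct, and it streamlines the paper's argument. Both proofs rest on the same scaling observation: $\J_\kappa$ is homogeneous of degree $2$ and $\U$ of degree $p+1$ under positive scalar multiplication, so the substitution $u=(\lambda/\mu)^{1/(p+1)}v$ is a bijection between the constraint sets, giving $\Gamma_{\lambda}=(\lambda/\mu)^{2/(p+1)}\Gamma_{\mu}$. You push this all the way to the explicit power law $\Gamma_{\lambda}=K\lambda^{2/(p+1)}$ with $K=\Gamma_1>0$ (by Lemma~\ref{lem: lower bound on Hs norm}), after which strict subadditivity is an immediate consequence of the strict concavity of $t\mapsto t^{q}$ for $q=2/(p+1)\in(0,1)$. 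The paper, by contrast, extracts only the one-sided inequality $\Gamma_{\theta\alpha}<\theta\Gamma_{\alpha}$ for $\theta>1$ from the same scaling identity and then runs a case analysis on whether $\alpha\geq\lambda-\alpha$, applying the inequality twice and folding in monotonicity; that version is a bit more roundabout and must handle the boundary case $\alpha=\lambda-\alpha$ with care. What your version buys is transparency: once the exact power law is on the table, strict subadditivity is a standard fact and no case splitting is needed. The one step you flag as worth checking — homogeneity $\U(au)=a^{p+1}\U(u)$ for $a>0$ under both (B1) and (B2) — is indeed a direct computation from \eqref{def: F} and holds as you state, so there is no gap.
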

\begin{proof}
Let $\theta\in (1, \lambda \alpha^{-1})$. Then,
\begin{align*}
\Gamma_{\theta\alpha} &=\inf \lbrace \J_{\kappa}(u) : u\in H^{s/2}, \,\, \int_{\R}F(u)\dx=\alpha\theta\rbrace \nonumber \\
&=\inf \lbrace \J_{\kappa}(\theta^\frac{1}{p+1}v) : v\in H^{s/2}, \,\,\int_{\R}F(v)\dx=\alpha \rbrace \nonumber \\
&=\theta^\frac{2}{p+1}\Gamma_\alpha \nonumber \\
&<\theta \Gamma_\alpha,
\end{align*}
where the last inequality follows from that $\theta>1$ and, by Assumption (B), $p>1$.
Now if $\alpha\geq \lambda-\alpha$,
\begin{align}
\label{eq: Gamma_a<Gamma_b+Gamma_(a-b)}
\Gamma_{\lambda} &=\Gamma_{\lambda-\alpha+\alpha}=\Gamma_{\alpha\left(1+ \frac{\lambda-\alpha}{\alpha}\right)}<\left(1+ \frac{\lambda-\alpha}{\alpha}\right)\Gamma_\alpha \nonumber \\
&=\Gamma_\alpha+\frac{\lambda-\alpha}{\alpha}\Gamma_{\frac{\alpha}{\lambda-\alpha}(\lambda-\alpha)}<\Gamma_\alpha+\Gamma_{\lambda-\alpha}.
\end{align}
For $\alpha\leq \lambda-\alpha$ one can derive the same inequality in a similar manner.
\end{proof}

To exclude vanishing, we will need the following result from \cite{albert}:
\begin{lemma}
\label{lem: lower bound on p+1 norm}
Given $K>0$ and $\delta>0$, there exists $\eta=\eta(K,\delta)>0$ such that if $v\in H^{s/2}$ with $\|v\|_{H^{s/2}}\leq K$ and $\|v\|_{L^{p+1}}\geq \delta$, then
\begin{equation*}
\sup_{y\in\R}\int_{y-2}^{y+2}|v(x)|^{p+1}\dx\geq \eta.
\end{equation*}
\end{lemma}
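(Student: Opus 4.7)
The plan is to prove the contrapositive of a slightly stronger quantitative statement. Under the bound $\|v\|_{H^{s/2}} \leq K$, I will establish
\[
\|v\|_{L^{p+1}}^{p+1} \leq C K^{2}\Bigl(\sup_{y\in\R}\int_{y-2}^{y+2}|v|^{p+1}\dx\Bigr)^{(p-1)/(p+1)},
\]
from which the lemma follows at once by choosing $\eta=\bigl(\delta^{p+1}/(CK^{2})\bigr)^{(p+1)/(p-1)}$.

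The argument is a standard covering estimate. Let $\eta_{0}:=\sup_{y}\int_{y-2}^{y+2}|v|^{p+1}\dx$ and cover $\R$ by the intervals $I_{j}:=[2j-2,2j+2]$, $j\in\Z$, which have bounded overlap. Splitting the exponent as $p+1=(p-1)+2$, the trivial identity
\[
\int_{I_{j}}|v|^{p+1}\dx=\Bigl(\int_{I_{j}}|v|^{p+1}\dx\Bigr)^{\!(p-1)/(p+1)}\cdot\Bigl(\int_{I_{j}}|v|^{p+1}\dx\Bigr)^{\!2/(p+1)},
\]
combined with the assumption $\int_{I_{j}}|v|^{p+1}\dx\leq \eta_{0}$, yields $\int_{I_{j}}|v|^{p+1}\dx\leq \eta_{0}^{(p-1)/(p+1)}\|v\|_{L^{p+1}(I_{j})}^{2}$. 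Summing over $j$ and exploiting the finite overlap gives
\[
\|v\|_{L^{p+1}(\R)}^{p+1}\leq C\,\eta_{0}^{(p-1)/(p+1)}\sum_{j}\|v\|_{L^{p+1}(I_{j})}^{2}.
\]

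It then remains to show $\sum_{j}\|v\|_{L^{p+1}(I_{j})}^{2}\leq CK^{2}$. By the Sobolev embedding $H^{s/2}(I)\hookrightarrow L^{p+1}(I)$ on a bounded interval $I$ --- valid since the admissible range of $p$ ensures $p+1\leq 2/(1-s)$ when $s\in(0,1)$ and any $p+1<\infty$ when $s\geq 1$ --- together with translation invariance, one has $\|v\|_{L^{p+1}(I_{j})}^{2}\leq C\|v\|_{H^{s/2}(I_{j})}^{2}$ uniformly in $j$. The remaining inequality $\sum_{j}\|v\|_{H^{s/2}(I_{j})}^{2}\leq C\|v\|_{H^{s/2}}^{2}$ follows from the bounded overlap of $\{I_{j}\}$ applied to the Gagliardo double-integral representation of the fractional seminorm, since any pair $(x,y)$ with $|x-y|\leq 4$ appears in only a bounded number of the local contributions. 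This subadditivity of the fractional seminorm under the cover is the only step demanding more than a line of justification and is, in my view, the main technical point of the proof; the rest is purely algebraic.
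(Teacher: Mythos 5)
Your proof is correct but takes a genuinely different route from the paper's. The paper uses a smooth partition of unity $\zeta_j(x)=\zeta(x-j)$ with $\sum_j\zeta_j\equiv 1$, shows that $v\mapsto\{\zeta_jv\}_j$ is bounded $H^r\to l_2(H^r)$ for $r=0,1$ by direct computation, and interpolates to $r=s/2$; it then locates, by a contradiction-and-summation argument, a single index $j_0$ for which $\|\zeta_{j_0}v\|_{H^{s/2}}^2$ is controlled by $\|\zeta_{j_0}v\|_{L^{p+1}}^{p+1}$, and finishes with Sobolev embedding on that one piece. You replace the smooth cutoffs by the raw intervals $I_j$, replace operator interpolation by the Gagliardo double-integral representation (using that every pair $(x,y)$ with $|x-y|\leq 4$ lies in boundedly many $I_j\times I_j$), and replace the pigeonhole step by the algebraic splitting $p+1=(p-1)+2$ applied to every piece simultaneously and then summed. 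The payoff is that you avoid interpolation theory altogether and obtain the clean quantitative inequality $\|v\|_{L^{p+1}}^{p+1}\leq CK^2\eta_0^{(p-1)/(p+1)}$, from which $\eta$ drops out explicitly rather than appearing through a contradiction.

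One point to make explicit: the Gagliardo-based localization $\sum_j\|v\|_{H^{s/2}(I_j)}^2\leq C\|v\|_{H^{s/2}}^2$ applies only when $s/2\in(0,1)$, since the double-integral seminorm is the fractional-order object. For $s\geq 2$ you should, exactly as the paper does at the outset of its proof, reduce to $H^1$ via $\|v\|_{H^1}\leq\|v\|_{H^{s/2}}\leq K$ and run the (now elementary, integer-order) localization in $H^1$ together with $H^1(I)\hookrightarrow L^\infty(I)\hookrightarrow L^{p+1}(I)$. With that reduction stated, the argument is complete.
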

\begin{proof}
Without loss of generality we may assume $s/2\leq 1$; if $s/2>1$ then $\|v\|_{H^1}\leq K$ and the argumentation that follows can be carried out for $H^1$. Choose a smooth function $\zeta: \R\rightarrow [0,1]$ with support in $[-2,2]$ and satisfying ${\sum_{j\in\Z}\zeta(x-j)=1}$ for all $x\in \R$, and define $\zeta_j(x)=\zeta(x-j)$ for $j\in \Z$. The map $T: H^r\rightarrow l_2(H^r)$ defined by
\begin{equation*}
Tv=\lbrace\zeta_jv\rbrace_{j\in\Z}
\end{equation*}
is easily seen to be bounded for $r=0$ and $r=1$. For $r=0$, 
\begin{equation*}
\|Tv\|_{l_2(L^2)}^2=\sum_{j\in \Z} \|\zeta_j v\|_{L^2}^2\leq \sum_{j\in \Z} \int_{-2+j}^{2+j}v^2\dx=4\|v\|_{L^2}^2,
\end{equation*}
and one can argue similarly when $r=1$, recalling that $\zeta$ is a smooth function. By interpolation the map $T$ is therefore also bounded for $r=s/2$. That is, there exists a constant $C_0$ such that for all $v\in H^{s/2}$,
\begin{equation*}
\sum_{j\in \Z}\|\zeta_jv\|_{H^{s/2}}^2\leq C_0\|v\|_{H^{s/2}}^2.
\end{equation*}
Since $l_{p+1}\hookrightarrow l_1$, there exists a positive number $C_1$ such that $\sum_{j\in\Z}|\zeta(x-j)|^{p+1}\geq C_1$ for all $x\in \R$. We claim that for every $v\in H^{s/2}$ that is not identically zero, there exist an integer $j_0$ such that
\begin{equation}
\label{eq: lower bound on p+1 norm}
\|\zeta_{j_0}v\|_{H^{s/2}}^2\leq \left(1+C_2|\|v\|_{L^{p+1}}^{-p-1}\right)\|\zeta_{j_0}v\|_{L^{p+1}}^{p+1},
\end{equation}
where $C_2=C_0K^2/C_1$. To see this, assume to the contrary that
\begin{equation*}
\|\zeta_{j}v\|_{H^{s/2}}^2> \left(1+C_2|\|v\|_{L^{p+1}}^{-p-1}\right)\|\zeta_{j}v\|_{L^{p+1}}^{p+1},
\end{equation*}
for every $j\in \Z$. Summing over $j$, we obtain
\begin{equation*}
C_0\|v\|_{H^{s/2}}^2>\left(1+C_2|\|v\|_{L^{p+1}}^{-p-1}\right)\sum_{j\in\Z}\|\zeta_{j}v\|_{L^{p+1}}^{p+1}
\end{equation*}
and hence by our choice of $C_2$
\begin{equation*}
C_0K^2>\left(1+C_2|\|v\|_{L^{p+1}}^{-p-1}\right)C_1\|v\|_{L^{p+1}}^{p+1}=C_1\|v\|_{L^{p+1}}^{p+1}+C_0K^2,
\end{equation*}
for $\|v\|_{H^{s/2}}\leq K$, which is a contradiction. This proves \eqref{eq: lower bound on p+1 norm}.

Observe now that from \eqref{eq: lower bound on p+1 norm} and the assumptions of the lemma it follows that
\begin{equation*}
\|\zeta_{j_0}v\|_{H^{s/2}}^2\leq \left(1+C_2/\delta^{p+1}\right)\|\zeta_{j_0}v\|_{L^{p+1}}^{p+1}.
\end{equation*}

For $p\leq \frac{1+s}{1-s}$, we have by the Sobolev embedding theorem that
\begin{equation*}
\|\zeta_{j_0}v\|_{L^{p+1}}\leq C\|\zeta_{j_0}v\|_{H^{s/2}}
\end{equation*}
where $C$ is independent of $v$. Combining the above two inequalities we get that
\begin{equation*}
\|\zeta_{j_0}v\|_{L^{p+1}}\geq \left[C^2\left(1+C_2/\delta^3\right)\right]^{1/(1-p)},
\end{equation*}
and since
\begin{equation*}
\int_{j_0-2}^{j_0+2}|v|^{p+1}\dx\geq \|\zeta_{j_0}v\|_{L^{p+1}}^{p+1}
\end{equation*}
the result follows, with $\eta=\left[C^2\left(1+C_2/\delta^3\right)\right]^{(p+1)/(1-p)}$.
\end{proof}
We may now exclude vanishing.
\begin{lemma}
\label{lem: no vanishing Z}
Vanishing does not occur.
\end{lemma}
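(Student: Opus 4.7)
The plan is to rule out vanishing by contradiction. If vanishing occurs, then since $\kappa u_n^2 \le \rho_n$, we have $\sup_{y\in\R} \int_{y-r}^{y+r} u_n^2\dx \to 0$ for every $r > 0$. The goal is to use this to force $|\U(u_n)| \to 0$, contradicting the constraint $\U(u_n) = \lambda \neq 0$.

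First, I would extract a uniform lower bound on $\|u_n\|_{L^{p+1}}$. Since $|F(u)| \le \tfrac{|c_p|}{p+1}|u|^{p+1}$ under both (B1) and (B2),
\begin{equation*}
|\lambda| = \left|\int_\R F(u_n)\dx\right| \le \tfrac{|c_p|}{p+1} \|u_n\|_{L^{p+1}}^{p+1},
\end{equation*}
so $\|u_n\|_{L^{p+1}} \ge \delta$ for some $\delta > 0$; the case $\lambda<0$, permitted only under (B2), is handled identically. Combined with the uniform bound $\|u_n\|_{H^{s/2}} \le M$ from Lemma \ref{lem: lower bound on Hs norm}, Lemma \ref{lem: lower bound on p+1 norm} then furnishes $\eta > 0$ and a sequence $\{y_n\} \subset \R$ with
\begin{equation*}
\int_{y_n-2}^{y_n+2} |u_n|^{p+1}\dx \ge \eta \quad \text{for all } n.
\end{equation*}

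The key step is to convert this local $L^{p+1}$ lower bound into a local $L^2$ lower bound, which directly contradicts vanishing. Fix $\phi\in C_0^\infty(\R)$ with $\phi\equiv 1$ on $[-2,2]$ and $\supp\phi\subset[-3,3]$, and set $\phi_n(x):=\phi(x-y_n)$. The Gagliardo--Nirenberg interpolation on $\R$, valid precisely in the range $p \in (1, \tfrac{1+s}{1-s})$ of Assumption (B), gives
\begin{equation*}
\eta \le \|\phi_n u_n\|_{L^{p+1}}^{p+1} \le C\,\|\phi_n u_n\|_{H^{s/2}}^{\theta(p+1)}\,\|\phi_n u_n\|_{L^2}^{(1-\theta)(p+1)}
\end{equation*}
with $\theta = (p-1)/(s(p+1)) \in (0,1]$. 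Since $\phi_n u_n$ is supported in $[y_n-3, y_n+3]$, one has
\begin{equation*}
\|\phi_n u_n\|_{L^2}^2 \le \int_{y_n-3}^{y_n+3} u_n^2\dx \le \kappa^{-1}\int_{y_n-3}^{y_n+3}\rho_n\dx,
\end{equation*}
which tends to zero under vanishing, while $\|\phi_n u_n\|_{H^{s/2}} \le C\|u_n\|_{H^{s/2}} \le CM$ since the $H^{s/2}$-multiplier norm of $\phi_n$ is independent of the translation $y_n$. These two estimates force the right-hand side to vanish, contradicting $\eta > 0$.

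The main technical point is the uniform-in-$n$ boundedness of multiplication by the cutoffs $\phi_n$ on $H^{s/2}$ when $s$ is non-integer. This is standard (via a paraproduct decomposition or a direct Fourier-side estimate), and translation invariance of the multiplier norm is what keeps the constant independent of $n$. With that settled, the Gagliardo--Nirenberg step closes the argument cleanly.
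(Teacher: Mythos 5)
Your proof follows essentially the same route as the paper's: derive the uniform lower bound $\|u_n\|_{L^{p+1}}\geq\delta$ from the constraint, invoke Lemma \ref{lem: lower bound on p+1 norm} together with the $H^{s/2}$ bound from Lemma \ref{lem: lower bound on Hs norm} to get a local $L^{p+1}$ lower bound, and then conclude via Sobolev/interpolation. The only difference is in the final step: the paper invokes the Sobolev embedding directly to bound $\bigl(\int_{y-2}^{y+2}|u_n|^{p+1}\,\mathrm{d}x\bigr)^{2/(p+1)}$ by $\int_{y-2}^{y+2}\rho_n\,\mathrm{d}x$, whereas you introduce the translated cutoff $\phi_n$ and apply Gagliardo--Nirenberg to separate a bounded $H^{s/2}$ factor from an $L^2$ factor that tends to zero under vanishing. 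Your version handles the nonlocal nature of $\rho_n$ more explicitly (the exponent $\theta=(p-1)/(s(p+1))$ is strictly less than $1$ precisely because $p<\tfrac{1+s}{1-s}$, so the $L^2$ factor genuinely appears), but it is the same idea.
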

\begin{proof}
Let $\lbrace u_n\rbrace_n$ be a minimizing sequence for $\Gamma_\lambda$. By Assumption (B), the constraint $\int_{\R} F(u_n)\dx=\lambda>0$ for all $n\in \N$ implies that $\|u_n\|_{L^{p+1}}\geq \delta$ for $\delta=(|c_p^{-1}|\lambda)^{1/(p+1)}$. By Lemma \ref{lem: lower bound on Hs norm}, we additionally have that there is a $K$ such that $\|u_n\|_{H^{s/2}}\leq K$ for all $n\in \N$. The criteria for Lemma \ref{lem: lower bound on p+1 norm} are therefore satisfied for all $n\in \N$ and there exists an $\eta(K, \delta)>0$ such that $\sup_{y\in \R} \int_{y-2}^{y+2}|u_n|^{p+1}\dx\geq \eta$ for all $n\in \N$. The result now follows from the Sobolev embedding theorem:
\begin{equation*}
C \eta^{2/p+1}\leq \sup_{y\in \R} C\left(\int_{y-2}^{y+2} |u_n|^{p+1}\dx\right)^{2/p+1}\leq \sup_{y\in \R} \int_{y-2}^{y+2} \rho_n \dx,
\end{equation*}
for all $n\in \N$, where $C>0$ is an embedding constant.
\end{proof}

Now it only remains to preclude dichotomy. The following theorem is the key result in order to do so.
\begin{theorem}
\label{thm: dichotomy convergence}
Assume that $L$ satisfies Assumption (A). Let $u\in H^{s/2}$ and $\varphi, \psi\in C^\infty$ satisfy $0\leq \varphi\leq 1$, $0\leq \psi\leq 1$,
\begin{equation*}
\varphi(x)=\left\{
	\begin{array}{l l}
		1, & \quad \text{if}\,\, |x|<1,\\
		0, & \quad \text{if}\,\, |x|>2,
	\end{array} \right.
\end{equation*} 
and
\begin{equation*}
\psi(x)=\left\{
	\begin{array}{l l}
		0, & \quad \text{if}\,\, |x|<1,\\
		1, & \quad \text{if}\,\, |x|>2.
	\end{array} \right.
\end{equation*}
Define $\varphi_r(x)=\varphi(x/r)$ and $\psi_r(x)=\psi(x/r)$ for all $x\in \R$. Then for all $r>0$ sufficiently large,
\begin{equation*}
\left|\int_{\R}\varphi_r u(L(\varphi_ru)-\varphi_rL u)\dx\right|\leq \beta(r) \|u\|_{H^{s/2}}^2
\end{equation*}
and
\begin{equation*}
\left|\int_{\R}\psi_r u(L(\psi_ru)-\psi_rL u)\dx\right|\leq \beta(r) \|u\|_{H^{s/2}}^2,
\end{equation*}
where $\beta(r)\rightarrow 0$ as $r\rightarrow \infty$. In particular, the integrals above converge to $0$ as $r\rightarrow \infty$ uniformly in $u\in H^{s/2}$.
\end{theorem}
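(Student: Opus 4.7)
The plan is to reduce the integral to inner products involving the square-root operator $L^{1/2}$ (the Fourier multiplier with symbol $\sqrt{m}$, as defined just above) and then invoke standard continuity of cut-off multiplication on Sobolev spaces. Since $m$ is real and non-negative, Plancherel's identity gives
\begin{equation*}
\int_\R f\, Lg\dx=\langle L^{1/2}f,L^{1/2}g\rangle_{L^2}\quad\text{for all}\ f,g\in H^{s/2},
\end{equation*}
and the bound $m(\xi)\le A_2(1+|\xi|^s)$ from Assumption (A) ensures that $L^{1/2}:H^{s/2}\to L^2$ is bounded. Using $\int \varphi_r u\cdot\varphi_r Lu\dx=\int \varphi_r^2 u\cdot Lu\dx$ and applying the above identity to both resulting terms,
\begin{equation*}
I_r:=\int_\R \varphi_r u\bigl(L(\varphi_r u)-\varphi_r Lu\bigr)\dx=\|L^{1/2}(\varphi_r u)\|_{L^2}^2-\langle L^{1/2}(\varphi_r^2 u),L^{1/2}u\rangle_{L^2},
\end{equation*}
which, after adding and subtracting $\langle L^{1/2}(\varphi_r u),L^{1/2}u\rangle$, rewrites as
\begin{equation*}
I_r=\langle L^{1/2}(\varphi_r u),L^{1/2}((\varphi_r-1)u)\rangle+\langle L^{1/2}(\varphi_r(1-\varphi_r)u),L^{1/2}u\rangle.
\end{equation*}

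By Cauchy--Schwarz and the boundedness of $L^{1/2}$, $|I_r|$ is controlled by a constant multiple of $(\|\varphi_r u\|_{H^{s/2}}+\|u\|_{H^{s/2}})(\|(\varphi_r-1)u\|_{H^{s/2}}+\|\varphi_r(1-\varphi_r)u\|_{H^{s/2}})$. The first factor is uniformly bounded in $r\ge 1$, because multiplication by $\varphi_r$ is a bounded operator on $H^{s/2}$ with operator norm independent of $r$: for integer Sobolev indices this follows from the product rule together with $\|\varphi_r\|_{L^\infty}\le 1$ and $\|\varphi_r^{(k)}\|_{L^\infty}=O(r^{-k})$, and complex interpolation extends the bound to arbitrary $s\ge 0$. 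Hence it suffices to prove that $(\varphi_r-1)u\to 0$ and $\varphi_r(1-\varphi_r)u\to 0$ in $H^{s/2}$ as $r\to\infty$.

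For $u\in\Schwartz$ this convergence is direct. Both cut-offs are bounded by $1$ and converge to $0$ pointwise, so their products with $u$ vanish in $L^2$ by dominated convergence. The Leibniz rule yields convergence to $0$ in every $H^k$ for integer $k$, since higher derivatives of the cut-offs come with factors of $r^{-j}$ for $j\ge 1$, and interpolation between $L^2$ and $H^{\lceil s/2\rceil}$ then provides the $H^{s/2}$-convergence. A density argument, combined with the uniform boundedness of the multiplication operators just established, extends the conclusion to arbitrary $u\in H^{s/2}$. The $\psi_r$ statement is handled identically; in that case, $\psi(0)=0$ gives $\psi_r u\to 0$ and $\psi_r^2 u\to 0$ in $H^{s/2}$, so both summands in the analogous decomposition vanish separately.

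The main technical delicacy is the non-integer interpolation step used to pass from convergence in $L^2$ and $H^{\lceil s/2\rceil}$ to convergence in $H^{s/2}$; for $s\in(0,2)$ a self-contained alternative is to estimate the Gagliardo seminorm of the cut-off products directly, splitting the double integral over $|x-y|<1$ (using $\|\varphi_r'\|_{L^\infty}=O(r^{-1})$) and $|x-y|\ge 1$ to obtain a quantitative $r^{-s}$ decay rate. Once the $H^{s/2}$-convergence of the cut-off products is in place, the operator-theoretic reduction through $L^{1/2}$ makes both displayed limits routine.
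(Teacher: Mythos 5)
Your proof is correct, but it follows a genuinely different route than the paper's. The paper's argument is entirely Fourier-side: after applying Plancherel it writes the commutator integral as a double integral in $(\xi,t)$, splits into $|\xi|>R$, $|\xi|\le R$ with $|t|\le k$, and $|\xi|\le R$ with $|t|>k$, and crucially invokes the \emph{uniform continuity of $m$ on compacta} to make $|m(\xi)-m(\xi-t)|$ small in the middle regime; the decay of $\widehat{\varphi_r}$ handles the last regime. Your argument instead rewrites the commutator expression as
\begin{equation*}
I_r=\langle L^{1/2}(\varphi_r u),L^{1/2}((\varphi_r-1)u)\rangle+\langle L^{1/2}(\varphi_r(1-\varphi_r)u),L^{1/2}u\rangle,
\end{equation*}
and reduces the whole matter to the elementary fact that cut-off errors $(\varphi_r-1)u$ and $\varphi_r(1-\varphi_r)u$ vanish in $H^{s/2}$, which is proved by uniform boundedness of the multiplication operators $M_{\varphi_r}$ together with density of $\Schwartz$ (or, for $s\in(0,2)$, the Gagliardo-seminorm estimate you sketch). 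Both approaches are valid; yours is shorter, avoids the frequency splitting, and treats the $\varphi_r$ and $\psi_r$ cases by nearly the same mechanism.

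One thing worth flagging is that your proof uses only the growth bound $m(\xi)\lesssim(1+|\xi|^2)^{s/2}$ needed for the boundedness of $L^{1/2}\colon H^{s/2}\to L^2$; it never invokes the piecewise-continuity hypothesis in Assumption (A). The paper, by contrast, builds its entire argument around that continuity, emphasizes its necessity in Section 2, and devotes the Appendix to constructing a discontinuous symbol that purportedly defeats the method. Your argument therefore appears to establish Theorem~\ref{thm: dichotomy convergence} under weaker hypotheses than the paper's. This is not an error in your proof; rather it points to the fact that the Appendix's construction operates in a different regime (a \emph{fixed} cut-off scale $R_1$ against a \emph{varying} sequence $u_n$, where no uniformity over bounded sets of $H^{s/2}$ is available) and is aimed at the applicability of Lemma~\ref{lem: dichotomy Z} rather than at the literal statement of Theorem~\ref{thm: dichotomy convergence}, which fixes $u$. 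It would be worth reconciling this point: if one wants the overall concentration-compactness machine to run, what matters is the uniform-in-$n$ version, and neither your proof nor the paper's establishes that in terms of $\|u_n\|_{H^{s/2}}$ alone, so the question of whether continuity of $m$ is genuinely needed shifts to the application rather than the theorem.
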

\begin{proof}
By Plancherel's theorem, basic properties of the Fourier transform, and Fubini's theorem,
\begin{align}
\int_{\R}\varphi_r u(L & (\varphi_ru)-\varphi_rL u)\dx \nonumber \\
& =\int_{\R}\overline{\widehat{\varphi_r u}}(\xi)\left[m(\xi)(\widehat{\varphi_r}\ast \widehat{u})(\xi)-(\widehat{\varphi_r}\ast (m\widehat{u}))(\xi)\right]\dxi \nonumber \nonumber \\
& =\int_{\R} \overline{\widehat{\varphi_r u}}(\xi)\int_{\R}\widehat{\varphi_r}(t)\widehat{u}(\xi-t)(m(\xi)-m(\xi-t))\, \mathrm{d}t\dxi \nonumber\\
& =\int_\R \int_\R \overline{\widehat{\varphi_r u}}(\xi)\widehat{\varphi_r}(t)\widehat{u}(\xi-t)(m(\xi)-m(\xi-t))\, \mathrm{d}t\dxi \nonumber\\
& =\int_\R \widehat{\varphi_r}(t)\int_\R \overline{\widehat{\varphi_r u}}(\xi)\widehat{u}(\xi-t)(m(\xi)-m(\xi-t))\dxi\, \mathrm{d}t. \label{eq: rewriting on fourier side}
\end{align}
We can write $m(\xi-t)=(1+\sqrt{m(\xi)})\sqrt{m(\xi-t)}\frac{\sqrt{m(\xi-t)}}{1+\sqrt{m(\xi)}}$. By assumption,
\begin{equation*}
\frac{\sqrt{m(\xi-t)}}{1+\sqrt{m(\xi)}}\leq \frac{A_2^{1/2}|\xi-t|^{s/2}}{1+A_1^{1/2}|\xi|^{s/2}}\leq C(1+|t|^{s/2})
\end{equation*}
for some $C$ independent of $t$ and $\xi$. Hence, by H\"older's inequality,
\begin{align*}
\Big|\int_\R &\overline{\widehat{\varphi_r u}}(\xi)\widehat{u}(\xi-t)m(\xi-t)\dxi\Big| \\
= & \left|\int_\R \overline{\widehat{\varphi_r u}}(\xi)(1+\sqrt{m(\xi)})\widehat{u}(\xi-t)\sqrt{m(\xi-t)}\frac{\sqrt{m(\xi-t)}}{1+\sqrt{m(\xi)}}\dxi\right| \\
\leq & C(1+|t|^{s/2})\|\overline{\widehat{\varphi_r u}}(1+\sqrt{m})\|_{L^2}\|\widehat{u}\sqrt{m}\|_{L^2} \\
\leq & C_1(1+|t|^{s/2})\|u\|_{H^{s/2}}^2,
\end{align*}
where $C_1$ is independent of $t$ and $r$. Arguing in the same way, we find that
\begin{equation*}
\left|\int_\R \overline{\widehat{\varphi_r u}}(\xi)\widehat{u}(\xi-t)m(\xi)\dxi\right|\leq C_2 (1+|t|^{s/2})\|u\|_{H^{s/2}}^2,
\end{equation*}
where $C_2$ is independent of $t$ and $r$. Hence 
\begin{equation*}
\left|\int_\R \overline{\widehat{\varphi_r u}}(\xi)\widehat{u}(\xi-t)(m(\xi)-m(\xi-t))\dxi\right|\leq C (1+|t|^{s/2}) \|u\|_{H^{s/2}}^2
\end{equation*}
for some $C>0$ independently of $r$. For any $\alpha<1$, we have that
\begin{align*}
\Big|\int_{|t|>r^{-\alpha}} \widehat{\varphi_r}(t)\int_\R \overline{\widehat{\varphi_r u}}(\xi) & \widehat{u}(\xi-t)(m(\xi)-m(\xi-t))\dxi\, \mathrm{d}t\Big| \\
& \leq C\|u\|_{H^{s/2}}^2\int_{|t|>r^{-\alpha}}|\widehat{\varphi_r}(t)|(1+|t|^{s/2})\, \mathrm{d}t
\end{align*}
As $\varphi$ is a Schwartz function $\widehat{\varphi_r}$ approximates unity as $r\rightarrow \infty$ and
\begin{equation*}
\int_{|t|>r^{-\alpha}}|\widehat{\varphi_r}(t)|(1+|t|^{s/2})\, \mathrm{d}t\rightarrow 0
\end{equation*}
as $r\rightarrow \infty$ for any $\alpha<1$. It remains to consider
\begin{equation*}
\int_{|t|<r^{-\alpha}} \widehat{\varphi_r}(t)\int_\R \overline{\widehat{\varphi_r u}}(\xi)\widehat{u}(\xi-t)(m(\xi)-m(\xi-t))\dxi\, \mathrm{d}t.
\end{equation*}
Let $\varepsilon>0$ be given, and let us first assume that $m(\xi)$ has no discontinuities. Then $m(\xi)$ is uniformly continuous on any bounded domain and hence there exists a number $R=R(r)$ such that $\lim_{r\rightarrow \infty} R(r)=\infty$ and $|m(\xi)-m(\xi-t)|<\varepsilon$ for all $|\xi|\leq R$ and $|t|<r^{-\alpha}$. Thus
\begin{align}
&\int_{|t|<r^{-\alpha}} \widehat{\varphi_r}(t) \left|\int_\R \overline{\widehat{\varphi_r u}}(\xi)\widehat{u}(\xi-t)(m(\xi)-m(\xi-t))\dxi\right|\, \mathrm{d}t \leq \varepsilon C\|u\|_{L^2}^2 \nonumber \\
& +\int_{|t|<r^{-\alpha}} \widehat{\varphi_r}(t) \left|\int_{|\xi|>R} \overline{\widehat{\varphi_r u}}(\xi)\widehat{u}(\xi-t)(m(\xi)-m(\xi-t))\dxi\right|\, \mathrm{d}t. \label{eq: bound when t<r}
\end{align}
Note that $\varepsilon$ can be made arbitrarily small by taking $r$ sufficiently large independently of $\|u\|_{H^{s/2}}$. Hence it remains only to show that the second term also converges to $0$ uniformly in $u\in H^{s/2}$. Assumption \eqref{assumption on m 2} implies that
\begin{align}
\int_{|t|<r^{-\alpha}} & \widehat{\varphi_r}(t) \left|\int_{|\xi|>R} \overline{\widehat{\varphi_r u}}(\xi)\widehat{u}(\xi-t)(m(\xi)-m(\xi-t))\dxi\right|\, \mathrm{d}t \nonumber \\
& \leq \int_{|t|<r^{-\alpha}} \widehat{\varphi_r}(t)\int_{|\xi|>R} |\overline{\widehat{\varphi_r u}}(\xi)\widehat{u}(\xi-t)||m(\xi)-m(\xi-t)|\dxi\, \mathrm{d}t \nonumber \\
& \leq \int_{|t|<r^{-\alpha}} \widehat{\varphi_r}(t) k(t)\int_{|\xi|>R} |\overline{\widehat{\varphi_r u}}(\xi)\widehat{u}(\xi-t)||\xi|^s\dxi\, \mathrm{d}t \nonumber \\
& \leq \|u\|_{H^{s/2}}^2\int_{|t|<r^{-\alpha}} \widehat{\varphi_r}(t) k(t)\, \mathrm{d}t \nonumber \\
& \leq \left(\sup_{|t|<r^{-\alpha}}k(t)\right) \|\widehat{\varphi}\|_{L^1} \|u\|_{H^{s/2}}^2. \label{eq: bound t something}
\end{align}
By assumption, $\lim_{r\rightarrow \infty} \sup_{|t|<r^{-\alpha}}k(t)=0$. This proves the first part when $m$ is continuous. Now let $m$ have a finite number of discontinuities. The inequalities \eqref{eq: bound when t<r} and \eqref{eq: bound t something} fail in an interval of length $2r^{-\alpha}$ around each discontinuity. As the number of discontinuities is finite, the total measure of the set where the inequalities fail therefore goes to $0$ as $r\rightarrow \infty$. Hence the result holds also in this case.

To prove the result for $\left|\int_{\R}\psi_r u(L(\psi_ru)-\psi_rL u)\dx\right|$, note that without loss of generality it can be assumed that $\psi_r=1-\varphi_r$. Then
\begin{align*}
\int_{\R}\psi_r u(L(\psi_ru)-\psi_rL u)\dx & =\int_{\R}\varphi_r u(L(\varphi_ru)-\varphi_rL u)\dx \\
& -\int_{\R}u(L(\varphi_ru)-\varphi_rL u)\dx.
\end{align*}
The first integral on the right-hand side is exactly what we had above, while the second integral can be written as in \eqref{eq: rewriting on fourier side} with $\overline{\widehat{\varphi_r u}}(\xi)$ replaced by $\overline{\widehat{u}}(\xi)$, which does not change the estimates.
\end{proof}
Using Theorem \ref{thm: dichotomy convergence} we are able to prove the equivalent of Lemma 2.15 in \cite{zeng} for a much larger class of operators $L$, in particular extending the result to operators of order $0<s<1$.
\begin{lemma}
\label{lem: dichotomy Z}
Assume the dichotomy alternative holds for $\rho_n$. Then for each $\varepsilon>0$ there is a subsequence of $\lbrace u_n\rbrace_n$, still denoted $\lbrace u_n\rbrace_n$, a real number $\bar{\lambda}=\bar{\lambda}(\varepsilon)$, $N\in \N$ and two sequences $\lbrace u_n^{(1)}\rbrace_n, \lbrace u_n^{(2)}\rbrace_n\subset H^{s/2}$ satisfying for all $n\geq N$:
\begin{subequations}
\begin{align}
&|\U(u_n^{(1)})-\bar{\lambda}|\leq\varepsilon, \label{eq: a Z} \\
& |\U(u_n^{(2)})-(\lambda-\bar{\lambda})|<\varepsilon, \label{eq: b Z} \\
&|\J_{\kappa}(u_n)-\J_{\kappa}(u_n^{(1)})-\J_{\kappa}(u_n^{(2)})|<\varepsilon \label{eq: c Z}.
\end{align}
\end{subequations}
Furthermore,
\begin{equation}
\label{eq: G(u^1)=mu}
| \J_{\kappa}(u_n^{(1)})-\bar{\mu}| \leq\varepsilon
\end{equation}
and
\begin{equation}
\label{eq: G(u^2)}
| \J_{\kappa}(u_n^{(2)})-(\mu-\bar{\mu})|\leq \varepsilon,
\end{equation}
where $\bar{\mu}$ is defined as in Lemma \ref{Lions}.
\end{lemma}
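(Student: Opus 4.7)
My strategy is to use the dichotomy conclusion of Lemma \ref{Lions} to produce translation points $\{y_n\}\subset\R$ and a scale $r_n \to \infty$, then decompose $u_n$ using the cutoffs $\varphi_{r_n}(\cdot - y_n)$ and $\psi_{r_n}(\cdot - y_n)$ from Theorem \ref{thm: dichotomy convergence}. Since $\dist(\supp\rho_n^{(1)},\supp\rho_n^{(2)})\to\infty$, I can pass to a subsequence on which $y_n$ and $r_n$ may be chosen so that
\[
\int_{|x-y_n|<r_n}\rho_n\dx \to \bar\mu,\quad \int_{|x-y_n|>2r_n}\rho_n\dx \to \mu-\bar\mu,\quad \int_{r_n<|x-y_n|<2r_n}\rho_n\dx \to 0.
\]
I then set $u_n^{(1)}(x) := \varphi_{r_n}(x-y_n)u_n(x)$ and $u_n^{(2)}(x) := \psi_{r_n}(x-y_n)u_n(x)$, and it remains to verify \eqref{eq: a Z}--\eqref{eq: G(u^2)}.

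For the nonlinear estimates \eqref{eq: a Z} and \eqref{eq: b Z}, I split $\U(u_n^{(1)}) = \int F(\varphi_{r_n}(\cdot-y_n)u_n)\dx$ over the three regions. On $\{|x-y_n|<r_n\}$ the cutoff is $1$, so $F(u_n^{(1)}) = F(u_n)$; outside $\{|x-y_n|\le 2r_n\}$ it vanishes; and on the transition annulus $|F(\varphi_{r_n}u_n)|\le C|u_n|^{p+1}$. The annular integral $\int_{r_n<|x-y_n|<2r_n}|u_n|^{p+1}\dx$ vanishes by Gagliardo--Nirenberg interpolation between the $L^2$-norm on the annulus, which is $o(1)$ since $\int_{\text{annulus}}\rho_n\dx\to 0$, and the uniformly bounded $H^{s/2}$-norm from Lemma \ref{lem: lower bound on Hs norm}; the hypothesis $p<(1+s)/(1-s)$ keeps the relevant interpolation exponent in $(0,1)$. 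After extracting a further subsequence so that $\int_{|x-y_n|<r_n}F(u_n)\dx$ converges to a limit $\bar\lambda$, \eqref{eq: a Z} follows, and \eqref{eq: b Z} follows from the analogous decomposition of $\U(u_n^{(2)})$ together with $\int F(u_n)\dx=\lambda$.

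The energy estimates \eqref{eq: c Z}, \eqref{eq: G(u^1)=mu} and \eqref{eq: G(u^2)} rest on Theorem \ref{thm: dichotomy convergence}. Applied with $r=r_n$ after translating by $y_n$, it yields $\int u_n^{(1)}L u_n^{(1)}\dx - \int\varphi_{r_n}^2 u_n L u_n\dx \to 0$, and likewise for $u_n^{(2)}$ with $\psi_{r_n}$. Combined with the trivial identities $\int\kappa(u_n^{(i)})^2\dx = \int(\text{cutoff})^2\kappa u_n^2\dx$, this gives
\[
2\J_\kappa(u_n^{(1)}) + 2\J_\kappa(u_n^{(2)}) - 2\J_\kappa(u_n) = -\int(1-\varphi_{r_n}^2-\psi_{r_n}^2)(u_n L u_n + \kappa u_n^2)\dx + o(1).
\]
The integrand on the right is supported in the transition annulus: the $\kappa u_n^2$ contribution is dominated by the vanishing annular $\rho_n$-mass, and the $u_n L u_n$ contribution is itself controlled by Theorem \ref{thm: dichotomy convergence} applied to a smooth cutoff of the annulus whose action on $u_n$ has vanishing $L^2$-norm. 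This yields \eqref{eq: c Z}. Finally, $\int\varphi_{r_n}^2\rho_n\dx\to\bar\mu$ by Step 1 (the annular discrepancy being $o(1)$), and a further use of Theorem \ref{thm: dichotomy convergence} identifies $\int\varphi_{r_n}^2 u_n L u_n\dx$ with $\int\varphi_{r_n}^2(L^{1/2}u_n)^2\dx$ modulo a vanishing commutator; combining these gives \eqref{eq: G(u^1)=mu}, and \eqref{eq: G(u^2)} is analogous.

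The principal obstacle at every step is the nonlocality of $L$: each spatial localization introduces a commutator that must be shown to be $o(1)$ as $r_n\to\infty$. Theorem \ref{thm: dichotomy convergence} is precisely the tool that controls these commutators by direct Fourier analysis, and it is what enables the method to treat all symbol orders $s>0$, in particular the range $s\in(0,1)$ previously inaccessible.
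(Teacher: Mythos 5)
Your overall architecture matches the paper's: the same cutoffs $\varphi$, $\psi$ from Theorem~\ref{thm: dichotomy convergence} translated by $y_n$ and dilated by $R_n$, the same splitting of $u_n$ into $u_n^{(1)}=\varphi_n u_n$ and $u_n^{(2)}=\psi_n u_n$, the same annulus argument for the nonlinear functional $\U$, and the same reliance on Theorem~\ref{thm: dichotomy convergence} (and its $L^{1/2}$ variant) for the commutator terms in $\J_\kappa$. However, there is a genuine gap in your treatment of \eqref{eq: c Z}. You stop short of imposing the pointwise identity $\varphi^2+\psi^2=1$ on the cutoffs, and as a result you are left with the residual annular term
\[
\int_{\R}\bigl(1-\varphi_{r_n}^2-\psi_{r_n}^2\bigr)\bigl(u_n L u_n + \kappa u_n^2\bigr)\dx,
\]
which you then claim to control by another application of Theorem~\ref{thm: dichotomy convergence}. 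That theorem gives no such estimate: it bounds the commutators $\int\varphi_r u\bigl(L(\varphi_r u)-\varphi_r L u\bigr)\dx$ and $\int\psi_r u\bigl(L(\psi_r u)-\psi_r L u\bigr)\dx$, but it says nothing about $\int\eta\, u_n L u_n\dx$ for a bump $\eta$ supported on the annulus $r_n\le|x-y_n|\le 2r_n$. The $\psi$-part of its proof relies on the tail of a fixed $u\in H^{s/2}$ being small in $H^{s/2}$, which is not available for the varying sequence $u_n$; and the dichotomy hypothesis only yields smallness of the annular $\rho_n$-mass, i.e.\ $L^2$- and $L^{1/2}$-type control, not control of $\int\eta\, u_n L u_n\dx$, since $L u_n$ is not in $L^2$ for a general $u_n\in H^{s/2}$. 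The paper avoids this entirely by requiring $\varphi^2+\psi^2=1$ from the outset (which is consistent with the hypotheses on $\varphi,\psi$ in Theorem~\ref{thm: dichotomy convergence}), so that the residual term vanishes identically and only the commutators remain. Once you make that choice, your argument for \eqref{eq: c Z}--\eqref{eq: G(u^2)} closes, and then your proposal coincides with the paper's proof.
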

\begin{proof}
By assumption we can for every $\varepsilon>0$ find a number $N\in \N$ and sequences of positive functions $\lbrace \rho_n^{(1)}\rbrace_n$ and $\lbrace \rho_n^{(2)}\rbrace_n$ satisfying the properties \eqref{eq: dichotomy}. In addition, we may assume {(see \cite{lions})} that $\lbrace \rho_n^{(1)}\rbrace_n$ and $\lbrace \rho_n^{(2)}\rbrace_n$ satisfy
\begin{align*}
& \mathrm{supp} \, \rho_n^{(1)}\subset (y_n-R_n,y_n+R_n), \\
&\mathrm{supp} \, \rho_n^{(2)}\subset (-\infty, y_n-2R_n)\cup (y_n+2R_n, \infty),
\end{align*}
where $y_n\in \R$ and $R_n\rightarrow \infty$. Then
\begin{equation}
\label{eq: some bound on rho}
\int_{R_n\leq |x-y_n|\leq 2R_n}\rho_n\dx\leq \varepsilon.
\end{equation}

Choose $\varphi,\psi$ as in Theorem \ref{thm: dichotomy convergence}, satisfying $\varphi^2+\psi^2=1$ in addition, and define
\begin{align*}
 \varphi_n(x) & =\varphi((x-y_n)/R_n), \\
 \psi_n(x) & =\psi((x-y_n)/R_n)
\end{align*}
and set $u_n^{(1)}=\varphi_nu_n$, $u_n^{(2)}=\psi_n u_n$. Since $\U(u_n^{(1)})$ is uniformly bounded for all $n\in \N$, there exists a subsequence of $\lbrace u_n^{(1)}\rbrace_n$, still denoted $\lbrace u_n^{(1)}\rbrace_n$, and a real number $\bar{\lambda}=\bar{\lambda}(\varepsilon)$ such that $\U(u_n^{(1)})\rightarrow\bar{\lambda}$. This implies that \eqref{eq: a Z} holds for sufficiently large $n$.

To prove \eqref{eq: b Z}, we write
\begin{align}
\label{eq: proof of dichotomy Z}
\int_{\R}F(u_n)\dx &=\int_{|x-y_n|\leq R_n}F(u_n)\dx+\int_{|x-y_n|\geq 2R_n}F(u_n)\dx \nonumber \\
&\quad +\int_{R_n\leq|x-y_n|\leq 2R_n}F(u_n)\dx \nonumber \\
&=\int_{|x-y_n|\leq R_n}F(u_n^{(1)})\dx+\int_{|x-y_n|\geq 2R_n}F(u_n^{(2)})\dx \nonumber \\
&\quad +\int_{R_n\leq|x-y_n|\leq 2R_n}F(u_n)\dx \\
&=\int_{\R}F(u_n^{(1)})\dx+\int_{\R}F(u_n^{(2)})\dx \nonumber \\
&\quad +\int_{R_n\leq |x-y_n|\leq 2R_n} F(u_n)-F(u_n^{(1)})-F(u_n^{(2)})\dx \nonumber.
\end{align}
By the Sobolev embedding theorem, \eqref{eq: some bound on rho} implies 
\begin{equation*}
\int_{R_n\leq |x-y_n|\leq 2R_n} |u_n|^{p+1}\dx\leq C \varepsilon^{2/(p+1)},
\end{equation*}
where $C>0$ is independent of $n$. This implies that the last line of \eqref{eq: proof of dichotomy Z} can be made less than $\varepsilon$ by taking $n$ large enough. Thus $|\U(u_n)-\U(u_n^{(1)})-\U(u_n^{(2)})| \leq \varepsilon$ and \eqref{eq: b Z} follows. To prove \eqref{eq: c Z}, note that
\begin{align*}
\J_{\kappa}& (u_n^{(1)})+\J_{\kappa}(u_n^{(2)})= \nonumber\\
= &\int_{\R}\varphi_n^2 u_nL u_n\dx+\int_{\R}\varphi_n u_n(L(\varphi_n u_n)-\varphi_nL u_n)\dx \nonumber \\
&+\int_{\R}\psi_n^2 u_nL u_n\dx+\int_{\R}\psi_n u_n(L(\psi_n u_n)-\psi_nL u_n)\dx \\
&+\int_{\R}(\varphi_n^2+\psi_n^2)\kappa u_n^2 \dx. \nonumber
\end{align*}
By Theorem \ref{thm: dichotomy convergence} and Lemma \ref{lem: lower bound on Hs norm}, 
\begin{equation*}
\left|\int_{\R}\varphi_n u_n(L(\varphi_n u_n)-\varphi_nL u_n)\dx\right|\leq \beta(R_n)\|u_n\|_{H^{s/2}}^2\leq \beta(R_n)M,
\end{equation*}
with an equivalent bound for $\left|\int_{\R}\psi_n u_n(L(\psi_n u_n)-\psi_nL u_n)\dx\right|$. As $\beta(R_n)\rightarrow 0$, $N$ can therefore be chosen sufficiently big so that
\begin{equation*}
\J_{\kappa}(u_n)-\varepsilon \leq \J_{\kappa}(u_n^{(1)})+\J_{\kappa}(u_n^{(2)})\leq \J_{\kappa}(u_n)+\varepsilon,
\end{equation*}
for all $n\geq N$.

To prove \eqref{eq: G(u^1)=mu}, we first write
\begin{align}
\label{eq: decompose tilde L}
\left(L^\frac{1}{2}u_n^{(1)}\right)^2= &\left(L^\frac{1}{2}(\varphi_nu_n)-\varphi_nL^\frac{1}{2}u_n\right)^2
 \\
&+ 2\varphi_nL^\frac{1}{2}u_n \left(L^\frac{1}{2}(\varphi_nu_n)-\varphi_nL^\frac{1}{2}u_n\right)+ \varphi_n^2\left(L^\frac{1}{2}u_n\right)^2. \nonumber
\end{align}
Theorem \ref{thm: dichotomy convergence} holds equally well for $L^\frac{1}{2}$, and so $N$ can be taken sufficiently large so that
\begin{equation*}
\int_{\R}\left(L^\frac{1}{2}u_n^{(1)}\right)^2\dx= \int_{\R}\varphi_n^2\left(L^\frac{1}{2}u_n\right)^2\dx+{\mathcal O}(\varepsilon),
\end{equation*}
for all $n\geq N$. Thus we can write
\begin{align*}
\J_{\kappa}(u_n^{(1)})& \geq \int_{\R} \kappa (u_n^{(1)})^2+\left(L^\frac{1}{2}u_n^{(1)}\right)^2\dx \nonumber \\
&=\int_{\R} \varphi_n^2\rho_n\dx+{\mathcal O}(\varepsilon) \nonumber \\
&=\int_{|x-y_n|\leq R_n}\rho_n \dx +\int_{R_n\leq |x-y_n|\leq 2R_n}\varphi_n^2\rho_n\dx +{\mathcal O}(\varepsilon) \\
&=\int_{\R}\rho_n^{(1)}+{\mathcal O}(\varepsilon) \nonumber \\
&\geq \bar{\mu}+{\mathcal O}(\varepsilon), \nonumber
\end{align*}
where the fourth line follows from \eqref{eq: dichotomy} and our assumptions on the support of $\rho_n^{(1)}$. This proves \eqref{eq: G(u^1)=mu}. To prove \eqref{eq: G(u^2)} we proceed similarly, noting that \eqref{eq: decompose tilde L} holds for $u_n^{(2)}$ and $\psi_n$, and that Theorem \ref{thm: dichotomy convergence} still applies in this case. We get
\begin{align*}
\J_{\kappa}(u_n^{(2)})& \geq \int_{\R} \kappa (u_n^{(2)})^2+\left(L^\frac{1}{2}u_n^{(2)}\right)^2\dx \nonumber \\
&=\int_{\R} \psi_n^2\rho_n\dx+{\mathcal O}(\varepsilon) \nonumber \\
&=\int_{|x-y_n|\geq 2R_n}\rho_n \dx +\int_{R_n\leq |x-y_n|\leq 2R_n}\psi_n^2\rho_n\dx +{\mathcal O}(\varepsilon) \\
&=\int_{\R}\rho_n^{(2)}+{\mathcal O}(\varepsilon) \nonumber \\
&\geq \mu- \bar{\mu}+{\mathcal O}(\varepsilon). \nonumber
\end{align*}
\end{proof}

As a consequence of Lemma \ref{lem: dichotomy Z}, we have the following result from \cite{zeng}:
\begin{lemma}
\label{lem: cons dichotomy Z}
Assume that dichotomy holds for $\rho_n$. Then there exists $\lambda_1\in (0,\lambda)$ such that
\begin{equation*}
\Gamma_\lambda\geq \Gamma_{\lambda_1}+\Gamma_{\lambda-\lambda_1}.
\end{equation*}
\end{lemma}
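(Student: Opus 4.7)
The plan is to apply Lemma \ref{lem: dichotomy Z} along a tolerance sequence $\varepsilon_j\downarrow 0$, extract a limit $\lambda_1$ of the associated parameters $\bar\lambda(\varepsilon_j)$, and then use the scaling homogeneity $\U(\sigma u)=|\sigma|^{p+1}\U(u)$ (case (B1)) or $\U(\sigma u)=\sigma|\sigma|^p\U(u)$ (case (B2)), together with $\J_{\kappa}(\sigma u)=\sigma^2\J_{\kappa}(u)$, to bound the two pieces from below by the constrained infima $\Gamma_{\lambda_1}$ and $\Gamma_{\lambda-\lambda_1}$. Concretely, for each $j$ Lemma \ref{lem: dichotomy Z} applied with $\varepsilon=\varepsilon_j$ furnishes sequences $\{u_n^{(1)}\}_n,\{u_n^{(2)}\}_n$ and a number $\bar\lambda_j$ satisfying \eqref{eq: a Z}--\eqref{eq: G(u^2)}. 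Since \eqref{eq: a Z} together with Sobolev embedding bound $|\bar\lambda_j|\leq |\U(u_n^{(1)})|+\varepsilon_j\leq C\|u_n^{(1)}\|_{H^{s/2}}^{p+1}+\varepsilon_j$ uniformly (invoking Lemma \ref{lem: lower bound on Hs norm} to bound the $H^{s/2}$ norm), a subsequence satisfies $\bar\lambda_j\to\lambda_1\in\R$.

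The core of the argument is to rule out $\lambda_1\notin(0,\lambda)$. Assume, for instance, $\lambda_1\leq 0$. Along a diagonal choice $n=n(j)\to\infty$, $\U(u_n^{(2)})$ tends to $\lambda-\lambda_1\geq\lambda>0$, and the intermediate value theorem supplies $\sigma_n>0$ with $\U(\sigma_n u_n^{(2)})=\lambda$ and $\sigma_n^2\to(\lambda/(\lambda-\lambda_1))^{2/(p+1)}\leq 1$. Combining \eqref{eq: c Z} with \eqref{eq: G(u^1)=mu} gives
\[
\J_{\kappa}(u_n^{(2)})\leq \J_{\kappa}(u_n)-\J_{\kappa}(u_n^{(1)})+\varepsilon_j\leq \Gamma_\lambda-\bar\mu+\bigO(\varepsilon_j),
\]
and, crucially, $\bar\mu>0$ is \emph{independent} of $\varepsilon_j$. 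Hence $\J_{\kappa}(\sigma_n u_n^{(2)})=\sigma_n^2\J_{\kappa}(u_n^{(2)})<\Gamma_\lambda$ in the limit, contradicting the admissibility of $\sigma_n u_n^{(2)}$ for $\Gamma_\lambda$. The case $\lambda_1\geq\lambda$ is handled symmetrically by rescaling $u_n^{(1)}$ and using \eqref{eq: G(u^2)}. Therefore $\lambda_1\in(0,\lambda)$.

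Once $\lambda_1\in(0,\lambda)$, for $n$ large the intermediate value theorem provides $\sigma_n^{(k)}\to 1$ rescaling $u_n^{(k)}$ to $\U=\lambda_1$ and $\U=\lambda-\lambda_1$, respectively, so that by the definition of $\Gamma$,
\[
(\sigma_n^{(1)})^2\J_{\kappa}(u_n^{(1)})\geq\Gamma_{\lambda_1},\qquad (\sigma_n^{(2)})^2\J_{\kappa}(u_n^{(2)})\geq\Gamma_{\lambda-\lambda_1}.
\]
Combining these with \eqref{eq: c Z} and $\J_{\kappa}(u_n)\to\Gamma_\lambda$, and letting $n\to\infty$ followed by $j\to\infty$, yields $\Gamma_\lambda\geq\Gamma_{\lambda_1}+\Gamma_{\lambda-\lambda_1}$. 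The hardest step is the boundary analysis in the previous paragraph: the strict positivity of $\bar\mu$ (which does not depend on $\varepsilon_j$) coming from the dichotomy alternative of Lemma \ref{Lions} is what produces the strict gap $\Gamma_\lambda-\bar\mu<\Gamma_\lambda$ needed to defeat the weak scaling factor $\sigma_n^2\leq 1$.
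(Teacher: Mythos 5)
Your proof is correct. The paper does not re-prove this lemma: it cites Zeng's Lemma 2.16 and asserts that the argument there carries over once Lemmas \ref{lem: dichotomy Z} and \ref{lem: Zeng 3} are in place, so there is no in-paper argument to compare against line by line. Your reconstruction is a complete, self-contained version of the standard concentration-compactness subadditivity step, following the expected mechanism: apply Lemma \ref{lem: dichotomy Z} along $\varepsilon_j\downarrow 0$, extract $\lambda_1=\lim_j\bar\lambda(\varepsilon_j)$, rescale the two pieces to constraint levels $\lambda_1$ and $\lambda-\lambda_1$ using the homogeneity $\U(\sigma u)=\sigma^{p+1}\U(u)$ for $\sigma>0$, and combine with \eqref{eq: c Z}. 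You correctly identify the crucial ingredient: $\bar\mu\in(0,\mu)$ is fixed by the dichotomy alternative in Lemma \ref{Lions} and does \emph{not} shrink with $\varepsilon_j$, so \eqref{eq: G(u^1)=mu} and \eqref{eq: G(u^2)} produce a strict drop $\Gamma_\lambda-\bar\mu$ (or $\Gamma_\lambda-(\mu-\bar\mu)$) that the weak rescaling factor $\sigma_n^2\le 1$ cannot offset, ruling out $\lambda_1\le 0$ and $\lambda_1\ge\lambda$. The one nominal deviation from what the paper advertises is that you never invoke Lemma \ref{lem: Zeng 3} (monotonicity of $\lambda\mapsto\Gamma_\lambda$), which the paper lists as a prerequisite for Zeng's proof; your $\bar\mu$-gap argument substitutes for whatever role monotonicity plays at the boundary cases in Zeng's write-up, and is a legitimate and arguably more direct way to close the case analysis. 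The small unspoken technicalities --- uniform boundedness of $\|u_n^{(1)}\|_{H^{s/2}}$ via the uniform multiplier norm of the rescaled cutoffs $\varphi_n$ on $H^{s/2}$, and the diagonal choice $n=n(j)$ requiring nested subsequences across the $\varepsilon_j$'s --- are standard and do not create gaps.
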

\begin{proof}
This is Lemma 2.16 in \cite{zeng}, and having established Lemma \ref{lem: dichotomy Z} and Lemma \ref{lem: Zeng 3}, the proof presented there holds without modification in the present case.
\end{proof}

As an immediate consequence of \ref{lem: cons dichotomy Z} we can preclude dichotomy.
\begin{corollary}
\label{cor: no dichotomy Z}
Dichotomy does not occur.
\end{corollary}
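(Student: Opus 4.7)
The plan is very short: this corollary should follow immediately by combining the two structural properties of $\Gamma_\lambda$ already established, namely the superadditive estimate forced by dichotomy (Lemma \ref{lem: cons dichotomy Z}) and the strict subadditive inequality that $\Gamma_\lambda$ enjoys in general (Lemma \ref{lem: Zeng 4}). The approach is a one-line argument by contradiction.

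First, I would assume for contradiction that dichotomy occurs for the sequence $\rho_n$ obtained from the minimizing sequence $\lbrace u_n\rbrace_n$ of $\Gamma_\lambda$. Applying Lemma \ref{lem: cons dichotomy Z}, I extract a number $\lambda_1\in (0,\lambda)$ such that
\begin{equation*}
\Gamma_\lambda \geq \Gamma_{\lambda_1}+\Gamma_{\lambda-\lambda_1}.
\end{equation*}
Next, since $\lambda_1\in (0,\lambda)$, I apply Lemma \ref{lem: Zeng 4} with $\alpha=\lambda_1$, which yields the strict inequality
\begin{equation*}
\Gamma_\lambda < \Gamma_{\lambda-\lambda_1}+\Gamma_{\lambda_1}.
\end{equation*}
The two displayed inequalities are incompatible, giving the desired contradiction and ruling out dichotomy.

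There is no real obstacle here; all the substantive work has been front-loaded into Lemma \ref{lem: dichotomy Z} (which relies crucially on the nonlocal commutator estimate of Theorem \ref{thm: dichotomy convergence}) and into the strict subadditivity Lemma \ref{lem: Zeng 4} (which exploits the scaling homogeneity of $F$ together with $p>1$). The only point worth being careful about in writing the proof is that Lemma \ref{lem: cons dichotomy Z} produces a genuine $\lambda_1$ strictly between $0$ and $\lambda$, which is exactly the range in which Lemma \ref{lem: Zeng 4} gives strict subadditivity; once this is observed the contradiction is immediate.
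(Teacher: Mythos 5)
Your argument matches the paper's proof exactly: the paper also deduces the corollary directly from Lemma \ref{lem: cons dichotomy Z} (which gives $\Gamma_\lambda\geq\Gamma_{\lambda_1}+\Gamma_{\lambda-\lambda_1}$ under dichotomy) and the strict subadditivity of Lemma \ref{lem: Zeng 4}. You have simply written out the one-line contradiction the paper leaves implicit.
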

\begin{proof}
This follows directly from Lemma \ref{lem: Zeng 4} and Lemma \ref{lem: cons dichotomy Z}.
\end{proof}
With vanishing and dichotomy excluded we are able to state the main existence result of this section:
\begin{lemma}
\label{lem: existence of minimizers Z}
The set $G_\lambda$ is non-empty for every $\lambda>0$. Moreover, for every minimizing sequence $\lbrace u_n\rbrace_n$, there exists a sequence of real numbers $\lbrace y_n\rbrace_n$ such that $\lbrace u_n(\cdot+y_n)\rbrace_n$ has a subsequence that converges in $H^{s/2}$ to an element $w\in G_\lambda$.
\end{lemma}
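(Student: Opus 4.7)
The plan is a standard concentration-compactness argument using the machinery assembled in this section. Let $\{u_n\}_n\subset H^{s/2}$ be a minimizing sequence for $\Gamma_\lambda$, and define $\rho_n=\kappa u_n^2+(L^{1/2}u_n)^2$ as above. By Lemma \ref{lem: lower bound on Hs norm} together with \eqref{eq: upper and lower bound on mu}, $\mu_n:=\int_\R\rho_n\dx$ is bounded above and below by positive constants; passing to a subsequence we may assume $\mu_n\to\mu>0$. By Remark \ref{rem: Lions}, Lemma \ref{Lions} applies to $\{\rho_n\}_n$, and by Lemma \ref{lem: no vanishing Z} and Corollary \ref{cor: no dichotomy Z} we can rule out vanishing and dichotomy. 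Hence, along a further subsequence, there exists $\{y_n\}_n\subset\R$ such that for every $\varepsilon>0$ there is $r<\infty$ with
\begin{equation*}
\int_{y_n-r}^{y_n+r}\rho_n\dx\geq\mu-\varepsilon,\qquad \text{for all }n.
\end{equation*}

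Next I would pass to the translated sequence $v_n:=u_n(\cdot+y_n)$. Since $\J_\kappa^{1/2}$ is an equivalent norm on $H^{s/2}$ and $\{\J_\kappa(v_n)\}_n$ is bounded, a subsequence (not relabelled) converges weakly in $H^{s/2}$ to some $w\in H^{s/2}$. By the Rellich–Kondrachov theorem, the embedding $H^{s/2}(-r,r)\hookrightarrow L^{p+1}(-r,r)$ is compact for $p\in(1,\tfrac{1+s}{1-s})$, so $v_n\to w$ strongly in $L^{p+1}_{\mathrm{loc}}$. Combining this local strong convergence with the compactness of $\rho_n$ around the origin (after translation), I would argue that the tails of $\int_\R F(v_n)\dx$ outside $(-r,r)$ are uniformly small (using the Sobolev bound $\int_{|x|>r}|v_n|^{p+1}\dx\lesssim \bigl(\int_{|x|>r}\rho_n\dx\bigr)^{(p+1)/2}$ coming from the Sobolev embedding applied to a smooth cutoff of $v_n$ outside $(-r,r)$, as in the proof of Lemma \ref{lem: no vanishing Z}). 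Thus $\U(v_n)\to\U(w)=\lambda$, so $w$ is admissible in \eqref{eq: Gamma}.

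It then remains to show $\J_\kappa(w)=\Gamma_\lambda$. By the admissibility of $w$, $\J_\kappa(w)\geq\Gamma_\lambda$. For the reverse, weak lower semicontinuity of $\J_\kappa$ (which is just weak lower semicontinuity of the equivalent $H^{s/2}$ norm squared) gives
\begin{equation*}
\J_\kappa(w)\leq \liminf_{n\to\infty}\J_\kappa(v_n)=\Gamma_\lambda,
\end{equation*}
so $w\in G_\lambda$. Consequently $G_\lambda\neq\emptyset$, and in fact $\J_\kappa(v_n)\to\J_\kappa(w)$. Since $\J_\kappa^{1/2}$ is an equivalent norm on the Hilbert space $H^{s/2}$, weak convergence $v_n\rightharpoonup w$ together with norm convergence upgrades to strong convergence $v_n\to w$ in $H^{s/2}$, which is precisely the claimed subsequential convergence of $\{u_n(\cdot+y_n)\}_n$.

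The main obstacle is the step showing $\U(w)=\lambda$: weak $H^{s/2}$ convergence alone does not give $L^{p+1}$ convergence globally, and for the non-sign-definite nonlinearity $F(u)=c_p u|u|^p/(p+1)$ in case (B2) there is no monotonicity to exploit. This is precisely where the compactness alternative from Lemma \ref{Lions} is indispensable, as it confines the mass of $\rho_n$ (and hence, via Sobolev embedding, of $|v_n|^{p+1}$) to a fixed ball up to arbitrarily small error, allowing the local $L^{p+1}$ compactness to conclude $\int_\R F(v_n)\dx\to\int_\R F(w)\dx$.
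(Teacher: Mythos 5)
Your overall strategy is the same as the paper's: apply Lemma~\ref{Lions} to $\rho_n$, rule out vanishing and dichotomy via Lemma~\ref{lem: no vanishing Z} and Corollary~\ref{cor: no dichotomy Z}, translate by $y_n$, extract a weakly convergent subsequence, show $\U(w)=\lambda$, and then use weak lower semicontinuity plus the Hilbert-space fact that weak convergence together with norm convergence gives strong convergence. The divergence is in how the key step $\U(v_n)\to\U(w)$ is handled, and there the specific estimate you invoke is not correct as stated.

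The paper first establishes \emph{global} strong $L^2(\R)$ convergence via a Cantor diagonalization: on each bounded interval $[-R_k,R_k]$, Rellich--Kondrachov gives a convergent subsequence in $L^2$, and the uniform-in-$n$ tail smallness $\int_{|x|>R_k}\tilde u_n^2\dx\leq 1/k$ follows \emph{pointwise} from $\kappa u_n^2\leq\rho_n$ together with the compactness alternative. Only after that does the paper interpolate, in \eqref{eq: convergence in L^(p+1)}, between $L^2$ and $H^{s/2}$ (with Sobolev exponents $\theta=(p-1)/(s(p+1))$ and $1-\theta=((p+1)s-p+1)/(s(p+1))$) to upgrade to $L^{p+1}(\R)$. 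You instead try to get $L^{p+1}$ convergence directly, via local Rellich--Kondrachov in $L^{p+1}$ plus a tail bound in $L^{p+1}$. That can work, but the tail bound you wrote,
\begin{equation*}
\int_{|x|>r}|v_n|^{p+1}\dx\lesssim\Bigl(\int_{|x|>r}\rho_n\dx\Bigr)^{(p+1)/2},
\end{equation*}
is wrong on two counts. First, the exponent: a cutoff $\zeta$ outside $(-r,r)$ followed by Sobolev embedding and the interpolation $\|\zeta v_n\|_{H^{(p-1)/(2(p+1))}}\leq\|\zeta v_n\|_{L^2}^{1-\theta}\|\zeta v_n\|_{H^{s/2}}^{\theta}$ yields
\begin{equation*}
\int_{|x|>r}|v_n|^{p+1}\dx\lesssim\Bigl(\int_{|x|>r-1}v_n^2\dx\Bigr)^{\frac{(p+1)s-p+1}{2s}}\|v_n\|_{H^{s/2}}^{\frac{p-1}{s}},
\end{equation*}
and the power $((p+1)s-p+1)/(2s)$ equals $(p+1)/2$ only when $p=1$. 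Second, you attribute the bound to ``Sobolev embedding applied to a smooth cutoff, as in the proof of Lemma~\ref{lem: no vanishing Z}'', but that inequality goes in the opposite direction and does not control the $L^{p+1}$ tail by $\rho_n$ mass; what is actually needed here is exactly the Gagliardo--Nirenberg-type interpolation the paper uses in \eqref{eq: convergence in L^(p+1)}, applied to $\zeta v_n$. Crucially, this only invokes $\kappa u_n^2\leq\rho_n$ to control the $L^2$ factor and a uniform $H^{s/2}$ bound for the other factor, so no commutator estimate on $L^{1/2}$ and the cutoff is required. Once the tail estimate is repaired along these lines, your variant is a valid alternative to the paper's ``$L^2$ first, interpolate second'' ordering; the remaining steps (weak lower semicontinuity and the norm-plus-weak upgrade) coincide with the paper's.
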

\begin{proof}
Let $\lbrace u_n\rbrace_n$ be a minimizing sequence. From Lemmas \ref{Lions}, \ref{lem: no vanishing Z} and Corollary \ref{cor: no dichotomy Z} we know that compactness occurs. That is, there exists a subsequence of $\lbrace u_n \rbrace_n$, which we denote by $\lbrace u_n \rbrace_n$, and a sequence of real numbers $\lbrace y_n\rbrace_n$ such that for any $\varepsilon>0$, one can find $R>0$ for which
\begin{equation*}
\int_{|x-y_n|\leq R}\rho_n \dx\geq \mu-\varepsilon
\end{equation*}
for all $n$, which implies
\begin{equation*}
\varepsilon\geq \int_{|x-y_n|\geq R}\rho_n\dx\geq \int_{|x-y_n|\geq R} (\kappa+\inf_{\xi\in \R}m(\xi)) u_n^2\dx.
\end{equation*}
We define $\tilde{u}_n(x)=u_n(x+y_n)$. Then
\begin{equation}
\label{eq: norm less than epsilon}
0\leq (\kappa+\inf_{\xi\in \R}m(\xi)) \int_{|x|\geq R}\tilde{u}_n^2\dx\leq \varepsilon.
\end{equation}
Thus, for every $k\in \N$, there exists an $R_k$ such that
\begin{equation*}
\int_{|x|\geq R_k}\tilde{u}_n^2\dx\leq \frac{1}{k}.
\end{equation*}
Furthermore $\lbrace \tilde{u}_n\rbrace_n$ is bounded in $H^{s/2}$, so for every $k\in \N$ there exists a $w_k\in L^2([-R_k, R_k]^c)$ and a subsequence of $\lbrace \tilde{u}_{n}\rbrace_n$, denoted $\lbrace \tilde{u}_{k,n}\rbrace_n$, such that $\tilde{u}_{k,n}\rightarrow w_k$ in $L^2([-R_k, R_k]^c)$ and
\begin{equation}
\int_{|x|\geq R_k}\tilde{u}_{k,n}^2\dx\leq \frac{1}{k}
\end{equation}
for all $n$. A Cantor diagonalization argument on the sequences $\lbrace \tilde{u}_{k,n}\rbrace_n$ yields a subsequence of $\lbrace \tilde{u}_{n}\rbrace_n$, still denoted by $\lbrace \tilde{u}_{n}\rbrace_n$, that converges strongly in $L^2$ to some function $w\in L^2$. Furthermore, $\lbrace \tilde{u}_{n}\rbrace_n$ converges weakly in $H^{s/2}$ by the Banach-Alaoglu theorem, and so $w\in H^{s/2}$ as well. The $L^2$ and weak $H^{s/2}$ convergence implies $L^{p+1}$ convergence:
\begin{align}
\label{eq: convergence in L^(p+1)}
\|\tilde{u}_n-w\|_{L^{p+1}}&\leq C\|\tilde{u}_n-w\|_{H^{(p-1)/2(p+1)}} \nonumber \\
&\leq C\|\tilde{u}_n-w\|_{H^0}^{((p+1)s-p+1)/(s(p+1))}\|\tilde{u}_n-w\|_{H^{s/2}}^{(p-1)/(s(p+1)} \\
&\leq C'\|\tilde{u}_n-w\|_{L^2}^{((p+1)s-p+1)/(s(p+1))}. \nonumber
\end{align}
Recall that we are assuming $1<p<\frac{1+s}{1-s}$, and so $\frac{p-1}{2(p+1)}<\frac{s}{2}$, which makes the above use of the Sobolev interpolation inequality valid.

Thus 
\begin{equation}
\label{eq: F(g)=q}
\U(w)=\lambda.
\end{equation}
By weak lower semi-continuity of the Hilbert norm we have
\begin{equation*}
\J_{\kappa}(w)\leq \liminf \J_{\kappa}(\tilde{u}_{n})=\Gamma_\lambda,
\end{equation*}
hence, by \eqref{eq: F(g)=q} and the definition of $\Gamma_\lambda$,
\begin{equation*}
\J_{\kappa}(w)=\Gamma_\lambda
\end{equation*}
and $w\in G_\lambda$. The above equations and remarks imply $\tilde{u}_{n}\rightarrow w$ in $H^{s/2}$.
\end{proof}
Now observe that if $f$ satisfies (B2), then $\lbrace u_n\rbrace_n$ is minimizing sequence for $\Gamma_\lambda$ if and only if $\lbrace -u_n\rbrace_n$ is a minimizing sequence for $\Gamma_{-\lambda}$. Recalling the calculations regarding the Lagrange multiplier at the beginning of the section and the scalings \eqref{eq: scale 1} and \eqref{eq: scale 2}, we have proven Theorem \ref{thm: existence} (ii).

\section{Concentration-compactness for \eqref{eq: constrained var. prob. 1}}
\label{existence2}

In this section we will prove existence of minimizers of
\begin{equation*}
I_q:=\inf\lbrace \E(w) : w\in H^{s/2} \,\, \text{and}\,\, \mathcal{Q}(w)=q\rbrace,
\end{equation*}
for $q>0$. The basic approach to finding minimizers of $I_q$ is the same as for $\Gamma_\lambda$, and many of the arguments above can be reused and/or modified to the present case. In this section we assume that $p\in (1,2s+1)$.

\begin{lemma}
\label{lem: I_q<0}
For all $q>0$, one has $-\infty<I_q$. Moreover, there exists a number $q_0\geq 0$ such that for all $q>q_0$,
\begin{equation*}
-\infty<I_q<0.
\end{equation*}
If $0\leq m(\xi)\leq A_2|\xi|^s$ for $|\xi|\leq 1$, then the statement holds for $q_0=0$.
\end{lemma}
\begin{proof}
To prove $I_q>-\infty$, we use the Sobolev embedding and interpolation theorems to obtain
\begin{equation*}
\left|\int_{\R}F(\varphi)\dx\right|\leq C\|\varphi\|_{H^{(p-1)/(2(p+1))}}^{p+1}\leq C\|\varphi\|_{H^0}^{((p+1)s-p+1)/s}\|\varphi\|_{H^{s/2}}^{(p-1)/s}.
\end{equation*}
Thus
\begin{align}
\label{eq: lower bound on E}
\E(\varphi)&=\E(\varphi)+\Q(\varphi)-\Q(\varphi) \nonumber \\
&=\frac{1}{2}\int_{\R}\varphi L\varphi+\varphi^2\dx-\int_{\R}F(\varphi)\dx-q \\
&\geq C_1\|\varphi\|_{H^{s/2}}^2-C_2q^{((p+1)s-p+1)/s}\|\varphi\|_{H^{s/2}}^{(p-1)/s}-q \nonumber
\end{align}
where $C_1,C_2>0$ depend only on the symbol $m$ and the Sobolev embedding constant, respectively. By assumption $(p-1)/s<2$, hence the growth of the term with negative sign in the last line of \eqref{eq: lower bound on E} is bounded by the growth of the positive term, and it follows that $I_q>-\infty$.

To prove that $I_q<0$ for all $q$ big enough,  choose $\varphi\in H^{s/2}$ such that $F(\varphi)$ is non-negative. This can be done by taking $\varphi$ to be non-positive if $c_p<0$, and $\varphi$ non-negative if $c_p>0$. For each $q>0$, there exists a number $a=a(q)>0$ such that $\Q(a(q)\varphi)=q$. Then
\begin{equation*}
I_q\leq \E(a(q)\varphi)=\frac{a(q)^2}{2}\int_\R \varphi L\varphi\dx -a(q)^{p+1}\int_\R F(\varphi)\dx.
\end{equation*}
Note that $a(q)\rightarrow \infty$ as $q\rightarrow \infty$ and that $p+1>2$. Hence the right-hand side in the equation above will be negative for all $q$ large enough. This proves the existence of a $q_0\geq 0$ as stated.

Assume now that, in addition to (A), $0\leq m(\xi)\leq A_2|\xi|^s$ for $|\xi|\leq 1$. Let $q>0$ and again choose $\varphi\in H^{s/2}$ such that $F(\varphi)$ is non-negative and $\Q(\varphi)=q$. For $t>0$, set $\varphi_t(x)=\sqrt{t}\varphi(tx)$. Then $\Q(\varphi_t)=q$ for all $t>0$,
\begin{equation*}
\int_\R F(\varphi_t)\dx=t^{(p-1)/2}\int_\R F(\varphi)\dx,
\end{equation*}
and
\begin{equation*}
\int_\R \varphi_t L\varphi_t\dx=\int_\R m(t\xi)|\widehat{\varphi}(\xi)|^2\dxi\leq t^sA_2\int_\R |\xi|^s|\widehat{\varphi}(\xi)|^2\dxi\leq t^s A_2 \|\varphi\|_{H^{s/2}}^2.
\end{equation*}
As $p\in(1,2s+1)$ by assumption, we get $(p-1)/2<s$, so that $t^s\rightarrow 0$ faster than $t^{(p-1)/2}\rightarrow 0$ as $t\rightarrow 0^+$. As $F(\varphi)$ is non-negative, it follows from the calculations above that for $t>0$ sufficiently small, $\int_\R F(\varphi_t)\dx>\int_\R \varphi_tL\varphi_t\dx$, which implies that $I_q\leq \E(\varphi_t)<0$. As $q>0$ was arbitrary, this proves the statement.
\end{proof}

\begin{lemma}
\label{lem: min seq bounded}
If $\lbrace u_n\rbrace_n$ is a minimizing sequence for $I_q$, then
\begin{itemize}
\item[(i)] $\|u_n\|_{H^{s/2}}\leq K$ for some constant $K>0$ and all $n$,
\item[(ii)] if $I_q<0$, then $\|u_n\|_{p+1}\geq \delta$ for some constant $\delta>0$ and all sufficiently large $n$.
\end{itemize}
In particular, (ii) holds if $q>q_0$ for $q_0$ as in Lemma \ref{lem: I_q<0}.
\end{lemma}
\begin{proof}
By Assumption (A), Pareseval's inequality and the Sobolev embedding and interpolation theorems, we have, for some constant $\theta$ depending only on $m$, that
\begin{align*}
\theta\|u_n\|_{H^{s/2}}^2&\leq \E(u_n)+Q(u_n)+\int_{\R}F(u_n)\dx \nonumber \\
&\leq \sup_{n}\E(u_n)+q+C\|u_n\|_{H^{(p-1)/(2(p+1))}}^{p+1} \\
&\leq C'+Cq^{((p+1)s-p+1)/s}\|u_n\|_{H^{s/2}}^{(p-1)/s}. \nonumber
\end{align*}
By assumption, $(p-1)/2<2$, and so we have bounded the square of $\|u_n\|_{H^{s/2}}$ by a smaller power, and the existence of a bound $K$ follows. To prove statement 2 we argue by contradiction. If no such constant $\delta$ exists, then
\begin{equation*}
\liminf_{n\rightarrow\infty}\int_{\R}F(u_n)\dx\leq 0,
\end{equation*}
which implies
\begin{align*}
I_q&=\lim_{n\rightarrow \infty}\left(\frac{1}{2}\int_{\R}u_nL u_n\dx -\int_{\R}F(u_n)\dx\right) \nonumber \\
&\geq \liminf_{n\rightarrow\infty}\left(-\int_{\R}F(u_n)\dx\right)\geq 0,
\end{align*}
contradicting the assumption that $I_q<0$.
\end{proof}

\begin{lemma}
\label{lem: I_(q1+q2)<I_q1+I_q2}
For all $q_1,q_2>0$ such that $I_{q_1+q_2}<0$, one has
\begin{equation*}
I_{(q_1+q_2)}<I_{q_1}+I_{q_2}.
\end{equation*}
\end{lemma}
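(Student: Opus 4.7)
The plan is to establish the stronger sub-homogeneity property
\begin{equation*}
I_{\sigma q}<\sigma I_q \qquad \text{for all } \sigma>1 \text{ and } q>0,
\end{equation*}
from which strict subadditivity will follow by elementary manipulation. To prove this, I would exploit the scaling $u\mapsto \theta u$ with $\theta=\sqrt{\sigma}>1$, which sends the constraint $\Q(u)=q$ to $\Q(\theta u)=\theta^2 q=\sigma q$ and acts on the energy as $\E(\theta u)=\theta^2 A(u)-\theta^{p+1}B(u)$, where $A(u):=\tfrac{1}{2}\int_{\R} uLu\dx\geq 0$ and $B(u):=\int_{\R} F(u)\dx$ (this homogeneity of $F$ under positive scalings is immediate from \eqref{def: F} in both cases (B1) and (B2)). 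Since $p>1$, the nonlinear term scales with a strictly larger exponent than the quadratic term, and this is precisely what will produce the strict inequality.

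Let $\{u_n\}$ be a minimizing sequence for $I_q$; in case (B2), replacing $u_n$ by $-u_n$ if necessary, we may assume $B(u_n)\geq 0$. Using $\E(u_n)\to I_q<0$ (Lemma \ref{lem: I_q<0}) together with $A(u_n)\geq 0$, we have $B(u_n)\geq A(u_n)-\E(u_n)\geq -\E(u_n)\to -I_q>0$, so that for $n$ sufficiently large $B(u_n)\geq \delta:=-I_q/2>0$. A direct rearrangement then gives
\begin{equation*}
\E(\theta u_n)=\theta^2\E(u_n)-(\theta^{p+1}-\theta^2)B(u_n)\leq \theta^2\E(u_n)-(\theta^{p+1}-\theta^2)\delta.
\end{equation*}
Since $\Q(\theta u_n)=\sigma q$, letting $n\to\infty$ yields $I_{\sigma q}\leq \sigma I_q-(\sigma^{(p+1)/2}-\sigma)\delta<\sigma I_q$, because $p>1$ forces $\sigma^{(p+1)/2}>\sigma$.

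To derive the strict subadditivity, I assume without loss of generality $q_1\geq q_2>0$. If $q_1=q_2$, then $I_{q_1+q_2}=I_{2q_1}<2I_{q_1}=I_{q_1}+I_{q_2}$ directly by sub-homogeneity with $\sigma=2$. If $q_1>q_2$, writing $q_1+q_2=(1+q_2/q_1)q_1$ gives $I_{q_1+q_2}<(1+q_2/q_1)I_{q_1}=I_{q_1}+(q_2/q_1)I_{q_1}$; separately $I_{q_1}=I_{(q_1/q_2)q_2}<(q_1/q_2)I_{q_2}$, which upon multiplying by $q_2/q_1>0$ becomes $(q_2/q_1)I_{q_1}<I_{q_2}$. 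Combining these two inequalities yields the desired $I_{q_1+q_2}<I_{q_1}+I_{q_2}$.

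The main point requiring care is the uniform lower bound $B(u_n)\geq \delta>0$ along a minimizing sequence; this is where the strict negativity of $I_q$ from Lemma \ref{lem: I_q<0} enters decisively, since without it the scaling argument would only produce non-strict subadditivity and hence would be useless for precluding dichotomy in Lemma \ref{Lions}.
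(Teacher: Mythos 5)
Your proof is correct and follows essentially the same route as the paper: both establish the sub-homogeneity $I_{\sigma q}<\sigma I_q$ for $\sigma>1$ via the scaling $u\mapsto\sqrt{\sigma}\,u$ and then deduce strict subadditivity by the standard rearrangement. The only difference is cosmetic: where the paper cites Lemma \ref{lem: min seq bounded} (whose proof in turn rests on Lemma \ref{lem: I_q<0}) to keep $\int_\R F(u_n)\dx$ bounded away from zero, you extract that bound directly from $I_q<0$ via $\int_\R F(u_n)\dx = \tfrac{1}{2}\int_\R u_n L u_n\dx - \E(u_n)\geq -\E(u_n)$, which is if anything slightly more streamlined (and, as you implicitly observe, makes the sign-normalization step in case (B2) unnecessary).
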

\begin{proof}
We start by claiming that if $q>0$ and $I_q<0$, then for $t>1$
\begin{equation*}
I_{tq}<tI_q.
\end{equation*}
To see this let $\lbrace u_n\rbrace$ be a minimizing sequence for $I_q$ and define $\tilde{u}_n=\sqrt{t}u_n$ for all $n$, so that $\Q(\tilde{u}_n)=tq$ and hence $\E(\tilde{u}_n)\geq I_{tq}$ for all $n$. Then for all $n$ we have
\begin{equation*}
I_{tq}\leq \frac{1}{2}\int_{\R}\tilde{u}_nL\tilde{u}_n\dx-\int_{\R}F(\tilde{u}_n)\dx=t\E(u_n)+(t-t^{(p+1)/2})\int_{\R}F(u_n)\dx.
\end{equation*}
Now taking $n\rightarrow \infty$ and using Lemma \ref{lem: min seq bounded}, we obtain
\begin{equation*}
I_{tq}\leq tI_q+\frac{1}{2}(t-t^{(p+1)/2})\delta<tI_q
\end{equation*}
since $p+1>2$ and $t>1$. If $I_{q_1},I_{q_2}\geq 0$ the statement is trivial. Assume therefore that, say, $I_{q_1}<0$. Then the claim above holds for $I_{q_1}$ and we can argue as in the proof of Lemma \ref{lem: Zeng 4}.
\end{proof}

We note that for any minimizing sequence $\lbrace u_n\rbrace_n$ of $I_q$, the sequence $\lbrace \frac{1}{2}u_n^2\rbrace_n$ satisfies the conditions for Lemma \ref{Lions} with $\mu=q$. We will now preclude vanishing and dichotomy, and thereby prove that compactness occurs.

\begin{lemma}
\label{lem: vanishing}
If $I_q<0$, vanishing does not occur.
\end{lemma}
\begin{proof}
From Lemmas \ref{lem: min seq bounded} and \ref{lem: lower bound on p+1 norm} we conclude that there exists a number $\eta>0$ and an integer $N\in \N$ such that
\begin{equation*}
\sup_{y\in \R}\int_{y-2}^{y+2}|u_n|^{p+1}\dx\geq \eta
\end{equation*}
for all $n>N$. The result now follows from standard embedding and interpolation arguments.
\end{proof}

\begin{lemma}
\label{lem:dichotomy}
Assume dichotomy occurs. Then for each $\varepsilon>0$ there is a subsequence of $\lbrace u_n\rbrace_n$, still denoted $\lbrace u_n\rbrace_n$, a real number $\bar{q}\in (0, q)$, $N\in \N$ and two sequences $\lbrace u_n^{(1)}\rbrace_n, \lbrace u_n^{(2)}\rbrace_n\subset H^{s/2}$ satisfying for all $n\geq N$:
\begin{subequations}
\begin{align}
& |\Q(u_n^{(1)})-\bar{q}|<\varepsilon, \label{eq: a A} \\
& |\Q(u_n^{(2)})-(q-\bar{q})|<\varepsilon, \label{eq: b A} \\
& \E(u_n)\geq \E(u_n^{(1)})+\E(u_n^{(2)})+\varepsilon. \label{eq: c A}
\end{align}
\end{subequations}
\end{lemma}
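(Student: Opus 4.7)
The plan is to mirror Lemma \ref{lem: dichotomy Z}, with $(\J_\kappa, \U)$ replaced by $(\E, \Q)$. Let $\{u_n\}_n$ be a minimizing sequence for $I_q$, which is bounded in $H^{s/2}$ by Lemma \ref{lem: min seq bounded}(i). Set $\rho_n := \tfrac12 u_n^2$, so $\int_\R \rho_n\,\dx = q$. Under the dichotomy alternative from Lemma \ref{Lions}, extract $\bar\mu \in (0,q)$, a subsequence (still denoted $\{u_n\}$), centres $y_n \in \R$ and radii $R_n\to\infty$, together with positive functions $\rho_n^{(1)}, \rho_n^{(2)}$ satisfying
\[
\supp \rho_n^{(1)} \subset (y_n-R_n,\,y_n+R_n), \qquad \supp \rho_n^{(2)} \subset \R \setminus (y_n-2R_n,\,y_n+2R_n),
\]
$\|\rho_n - \rho_n^{(1)} - \rho_n^{(2)}\|_{L^1}\le \varepsilon$, and, as in \eqref{eq: some bound on rho}, the annular mass bound $\int_{R_n\le|x-y_n|\le 2R_n} u_n^2\,\dx \le C\varepsilon$.

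Fix $\varphi,\psi \in C^\infty$ as in Theorem \ref{thm: dichotomy convergence}, chosen additionally so that $\varphi^2 + \psi^2 \equiv 1$, and define $\varphi_n(x) := \varphi((x-y_n)/R_n)$, $\psi_n(x):= \psi((x-y_n)/R_n)$, $u_n^{(1)}:=\varphi_n u_n$, $u_n^{(2)}:= \psi_n u_n$. Item \eqref{eq: a A} follows by extracting a further subsequence along which the bounded sequence $\{\Q(u_n^{(1)})\}$ converges to some $\bar q = \bar q(\varepsilon)$; combining the support properties of the $\rho_n^{(j)}$ with the $L^1$-approximation and the annular bound then identifies $\bar q$ (up to tolerance) with $\tfrac12\bar\mu$ and yields \eqref{eq: b A}, with strictness $\bar q \in (0,q)$ inherited from $\bar\mu\in (0,q)$.

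For \eqref{eq: c A}, the identity $\varphi_n^2 + \psi_n^2 \equiv 1$ gives
\[
\int_\R u_n L u_n\, \dx = \int_\R u_n^{(1)} L u_n^{(1)}\, \dx + \int_\R u_n^{(2)} L u_n^{(2)}\, \dx + \mathcal{R}_n^L,
\]
where $\mathcal{R}_n^L = -\int_\R u_n^{(1)}\bigl(L u_n^{(1)} - \varphi_n L u_n\bigr)\dx - \int_\R u_n^{(2)}\bigl(L u_n^{(2)} - \psi_n L u_n\bigr)\dx$. Theorem \ref{thm: dichotomy convergence}, applied to the translated sequence $\tilde u_n(\cdot) := u_n(\cdot + y_n)$ with $r = R_n \to \infty$ (using translation invariance of $L$), forces $\mathcal{R}_n^L \to 0$. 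For the nonlinear part, the localisation calculation \eqref{eq: proof of dichotomy Z} gives
\[
\int_\R\bigl[F(u_n)-F(u_n^{(1)})-F(u_n^{(2)})\bigr]\dx = \int_{R_n\le |x-y_n|\le 2R_n}\bigl[F(u_n)-F(u_n^{(1)})-F(u_n^{(2)})\bigr]\dx,
\]
whose modulus is bounded by $C\|u_n\|_{L^{p+1}(R_n\le|x-y_n|\le 2R_n)}^{p+1}$; the annular $L^2$-bound, the uniform $H^{s/2}$-bound from Lemma \ref{lem: min seq bounded}(i), and the Sobolev interpolation $\|v\|_{L^{p+1}}\le C\|v\|_{L^2}^\alpha\|v\|_{H^{s/2}}^{1-\alpha}$ (valid for some $\alpha>0$ since $p\in(1,2s+1)$) render this $O(\varepsilon^{\alpha(p+1)})$. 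Collecting terms yields $|\E(u_n) - \E(u_n^{(1)}) - \E(u_n^{(2)})| \to 0$ along the subsequence; running the argument with $\varepsilon$ replaced by a sufficiently small $\varepsilon' = \varepsilon'(\varepsilon)$ in Lemma \ref{Lions} and on the annular bound, and relabelling, delivers \eqref{eq: c A}.

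The main obstacle is transferring Theorem \ref{thm: dichotomy convergence}, nominally a statement for a single $u \in H^{s/2}$ with $r\to\infty$, to the sequence $\{\tilde u_n\}$ with $r = R_n$. Inspection of that proof shows the controlling constants depend on $\|u\|_{H^{s/2}}$ (through the tail cutoff $R$ chosen to trim $\widehat{\varphi_r u}$ on $|\xi|>R$) and on the fixed Schwartz data of $\widehat\varphi$; exploiting the uniform $H^{s/2}$-bound from Lemma \ref{lem: min seq bounded}(i) and, if needed, a Cantor diagonal extraction ensuring $R_n$ grows quickly enough, one obtains the required uniformity in $n$. This parallels the (implicit) handling of the same issue in Lemma \ref{lem: dichotomy Z}.
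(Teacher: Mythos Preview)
Your argument is correct and follows the paper's own proof essentially step for step: cut with $\varphi_n,\psi_n$ satisfying $\varphi_n^2+\psi_n^2\equiv 1$, handle the dispersive commutator via Theorem~\ref{thm: dichotomy convergence}, and bound the $F$-remainder on the annulus by Sobolev interpolation against the annular $L^2$-mass. Two cosmetic points: the identification should be $\bar q\approx\bar\mu$, not $\tfrac12\bar\mu$ (your $\rho_n=\tfrac12 u_n^2$ already integrates to $q$, so $\bar\mu$ lives on the $\Q$-scale); and since here $\Q$ \emph{is} the integral of $\rho_n$, the paper obtains \eqref{eq: a A}--\eqref{eq: b A} directly from the dichotomy data without your extra subsequence extraction---your detour through a limit $\bar q(\varepsilon)$ mirrors Lemma~\ref{lem: dichotomy Z} but is superfluous in this setting.
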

\begin{proof}
This proof follows along the lines of the proof of Lemma \ref{lem: dichotomy Z}, but we present it in detail for the sake of clarity as some different argumentation are needed.

As noted in Lemma \ref{lem: dichotomy Z}, we can, by assumption, for $\varepsilon>0$ find a number $N\in \N$ and sequences $\lbrace \rho_n^{(1)}\rbrace_n$ and $\lbrace \rho_n^{(2)}\rbrace_n$ of positive functions satisfying the properties \eqref{eq: dichotomy}, where $\rho_n=\frac{1}{2}u_n^2$, $\mu=q$ and $\bar{q}=\bar{\mu}$. We may assume that $\lbrace \rho_n^{(1)}\rbrace_n$ and $\lbrace \rho_n^{(2)}\rbrace_n$ satisfy
\begin{align*}
& \mathrm{supp} \, \rho_n^{(1)}\subset (y_n-R_n,y_n+R_n), \\
&\mathrm{supp} \, \rho_n^{(2)}\subset (-\infty, y_n-2R_n)\cup (y_n+2R_n, \infty),
\end{align*}
where $y_n\in \R$ and $R_n\rightarrow \infty$. Then
\begin{equation}
\label{eq: eq 1 dichotomy lemma}
\frac{1}{2}\int_{R_n\leq |x-y_n|\leq 2R_n} u_n^2\dx\leq \varepsilon,
\end{equation}
for all $n\geq N$. Now choose $\varphi,\psi$ as in Theorem \ref{thm: dichotomy convergence}, satisfying $\varphi^2+\psi^2=1$ in addition, and define $\varphi_n(x)=\varphi((x-y_n)/R_n)$, $\psi_n(x)=\psi((x-y_n)/R_n)$, $u_n^{(1)}=\varphi_nu_n$ and $u_n^{(2)}=\psi_n u_n$. By the definitions of $u_n^{(1)}$ and $\rho_n^{(1)}$,
\begin{align*}
\left|\Q\left(u_n^{(1)}\right) - \int_{\R}\rho_n^{(1)} \dx \right|  = & \int_{|x-y_n|\leq R_n} |\frac{1}{2}u_n^2-\rho_n^{(1)}|\dx \\
& +\frac{1}{2}\int_{R_n\leq |x-y_n|\leq 2R_n}\varphi_n^2 u_n^2\dx \\
 \leq & \varepsilon + \frac{1}{2}\int_{R_n\leq |x-y_n|\leq 2R_n} u_n^2\dx\leq 2\varepsilon,
\end{align*}
for all $n\geq N$, where the last inequality follows from \eqref{eq: eq 1 dichotomy lemma}. By definition $\left| \int_{\R}\rho_n^{(1)} \dx -\bar{q}\right| \leq \varepsilon$ and we get \eqref{eq: a A} by repeating the procedure for $\varepsilon/3$. Comparing $\Q\left(u_n^{(2)}\right)$ to $\int_{\R}\rho_n^{(2)} \dx$, we obtain \eqref{eq: b A} by exactly the same arguments. To prove \eqref{eq: c A}, we write
\begin{align*}
\E(u_n^{(1)}) &+\E(u_n^{(2)})=\nonumber \\
= &\frac{1}{2}\left[\int_{\R}\varphi_n^2 u_nL u_n\dx+\int_{\R}\varphi_n u_n(L(\varphi_n u_n)-\varphi_nL u_n)\dx\right] \nonumber \\
&+\frac{1}{2}\left[\int_{\R}\psi_n^2 u_nL u_n\dx+\int_{\R}\psi_n u_n(L(\psi_n u_n)-\psi_nL u_n)\dx\right] \\
&-\int_{\R}\left(\varphi_n^2+\psi_n^2\right)F(u_n)\dx \nonumber \\
&+\int_{\R}\left[(\varphi_n^2-\varphi_n^{p+1})+(\psi_n^2-\psi_n^{p+1})\right]F(u_n)\dx. \nonumber
\end{align*}
It follows from Theorem \ref{thm: dichotomy convergence} and Lemma \ref{lem: min seq bounded} that by taking $n$ sufficiently large (so that $R_n$ is large enough), we get
\begin{equation*}
\E(u_n^{(1)})+\E(u_n^{(2)})\leq \E(u_n)+\varepsilon+\int_{\R}\left[(\varphi_n^2-\varphi_n^{p+1})+(\psi_n^2-\psi_n^{p+1})\right]F(u_n)\dx.
\end{equation*}
For $|x-y_n|\not\in (R_n,2R_n)$ we have, by our choice of $\varphi$ and $\psi$, that $\varphi_n^2=\varphi_r^{p+1}$ and $\psi_r^2=\psi_r^{p+1}$. Thus
\begin{align*}
\left|\int_{\R}\left[(\varphi_n^2-\varphi_n^{p+1})  +(\psi_n^2-\psi_n^{p+1})\right]F(u_n)\dx\right|&\leq \int_{R_n\leq |x-y_n|\leq 2R_n}2 \left| F(u_n)\right|\dx \nonumber \\
&\leq CK^{(p-1)/s}\varepsilon^{((p+1)s-p+1)/s},
\end{align*}
where we used the usual Sobolev embedding and interpolation theorems, combined with the boundedness of $u_n$ in $H^{s/2}$ and \eqref{eq: eq 1 dichotomy lemma}. As $((p+1)s-p+1)/s>0$, this proves that there is an $N$ such that \eqref{eq: c A} holds for all $n\geq N$.
\end{proof}

\begin{corollary}
\label{cor: no dichotomy A}
Dichotomy does not occur.
\end{corollary}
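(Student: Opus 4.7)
The plan is to argue by contradiction using the strict subadditivity of Lemma \ref{lem: I_(q1+q2)<I_q1+I_q2}. Assume dichotomy occurs. Lemma \ref{lem:dichotomy} then produces, for every small $\varepsilon>0$, sequences $\{u_n^{(1)}\},\{u_n^{(2)}\}\subset H^{s/2}$ and a number $\bar q\in(0,q)$ (fixed, as it equals Lions' threshold associated to the concentration function of $\{\rho_n\}$) such that $\Q(u_n^{(i)})$ is $\varepsilon$-close to $\bar q$ and $q-\bar q$ respectively, together with the energy estimate $\E(u_n^{(1)})+\E(u_n^{(2)})\leq \E(u_n)+\varepsilon$ coming from \eqref{eq: c A}. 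The idea is to renormalize $u_n^{(i)}$ so it becomes admissible for the variational problem $I_{\bar q}$ (resp.\ $I_{q-\bar q}$), add the resulting lower bounds, and let $\varepsilon\to 0$.

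Since $u_n^{(i)}=\eta_n u_n$ with $\eta_n\in\{\varphi_n,\psi_n\}$ a smooth cutoff bounded by $1$, Lemma \ref{lem: min seq bounded} yields uniform bounds $\|u_n^{(i)}\|_{H^{s/2}}\leq C$ and hence $\|u_n^{(i)}\|_{L^{p+1}}\leq C$ by Sobolev embedding. I rescale by setting
\begin{equation*}
\tilde u_n^{(1)}=a_n^{(1)}u_n^{(1)},\qquad a_n^{(1)}=\sqrt{\bar q/\Q(u_n^{(1)})},
\end{equation*}
and analogously $\tilde u_n^{(2)}=a_n^{(2)}u_n^{(2)}$ with $a_n^{(2)}=\sqrt{(q-\bar q)/\Q(u_n^{(2)})}$. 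Since $\bar q$ and $q-\bar q$ are strictly positive, the denominators are bounded away from $0$ for sufficiently small $\varepsilon$, so the $a_n^{(i)}$ are well defined and satisfy $|a_n^{(i)}-1|=O(\sqrt\varepsilon)$. Using the homogeneity $F(au)=a^{p+1}F(u)$ for $a>0$ which follows from Assumption (B),
\begin{equation*}
\E(\tilde u_n^{(i)})=(a_n^{(i)})^2\cdot\tfrac{1}{2}\int_{\R}u_n^{(i)}Lu_n^{(i)}\dx-(a_n^{(i)})^{p+1}\int_{\R}F(u_n^{(i)})\dx,
\end{equation*}
and the uniform bounds on both integrals (via Assumption (A) and Sobolev embedding) give $|\E(\tilde u_n^{(i)})-\E(u_n^{(i)})|\leq C'\sqrt\varepsilon$.

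By construction $\Q(\tilde u_n^{(1)})=\bar q$ and $\Q(\tilde u_n^{(2)})=q-\bar q$ exactly, so $\E(\tilde u_n^{(1)})\geq I_{\bar q}$ and $\E(\tilde u_n^{(2)})\geq I_{q-\bar q}$. Adding these and combining with \eqref{eq: c A},
\begin{equation*}
I_{\bar q}+I_{q-\bar q}\leq \E(u_n)+\varepsilon+2C'\sqrt\varepsilon.
\end{equation*}
Letting $n\to\infty$ along the minimizing sequence yields $I_{\bar q}+I_{q-\bar q}\leq I_q+\varepsilon+2C'\sqrt\varepsilon$, and sending $\varepsilon\to 0$ gives $I_{\bar q}+I_{q-\bar q}\leq I_q$, contradicting Lemma \ref{lem: I_(q1+q2)<I_q1+I_q2}. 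The main obstacle is the rescaling step: it relies on the uniform $H^{s/2}$-bound on $\{u_n^{(i)}\}$ (to control both integrals defining $\E$) and on the positivity of the split masses $\bar q$ and $q-\bar q$ (keeping $a_n^{(i)}$ close to $1$ as $\varepsilon\to 0$), both of which are already at our disposal.
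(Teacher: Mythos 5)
Your proof is correct and follows the same basic strategy as the paper: assume dichotomy, invoke Lemma \ref{lem:dichotomy} to produce the split sequences with approximately the right masses and approximately additive energy, and then derive $I_{\bar q}+I_{q-\bar q}\leq I_q$, contradicting the strict subadditivity of Lemma \ref{lem: I_(q1+q2)<I_q1+I_q2}. The one place you differ is in handling the fact that $\Q(u_n^{(i)})$ is only $\varepsilon$-close to $\bar q$ (resp.\ $q-\bar q$) rather than equal: the paper passes directly to quantities $I_{\bar q\pm\varepsilon}$ and sends $\varepsilon\to 0^+$, implicitly relying on continuity of $q\mapsto I_q$, whereas you rescale by $a_n^{(i)}$ so the constraint holds exactly and control the resulting change in $\E$ via the uniform $H^{s/2}$ bounds. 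Your rescaling is a more self-contained route to the same conclusion, at the cost of a small additional computation; it avoids having to justify that $I_q$ depends continuously on $q$. (Two small remarks: you correctly read \eqref{eq: c A} as $\E(u_n^{(1)})+\E(u_n^{(2)})\leq\E(u_n)+\varepsilon$, which is what the proof of Lemma \ref{lem:dichotomy} actually delivers — the sign on $\varepsilon$ in the displayed inequality in the paper appears to be a typo; and your $|a_n^{(i)}-1|=O(\sqrt\varepsilon)$ could in fact be improved to $O(\varepsilon)$ since $\bar q$ and $q-\bar q$ are fixed positive numbers, though the weaker bound is more than enough.)
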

\begin{proof}
Assume to the contrary that dichotomy occurs for a minimizing sequence $\lbrace u_n\rbrace_n$. Then by Lemma \ref{lem:dichotomy} there is for every $\varepsilon>0$, a subsequence of $\lbrace u_n\rbrace_n$, still denoted $\lbrace u_n\rbrace_n$, a number $\bar{q}\in (0,q)$, $N\in \N$ and two sequences $\lbrace u_n^{(1)}\rbrace_n, \lbrace u_n^{(2)}\rbrace_n\subset H^{s/2}$ such that \eqref{eq: a A}-\eqref{eq: c A} are satisfied for all $n\geq N$. Then
\begin{align*}
I_q & =\liminf_{n\rightarrow \infty} \E(u_n) \\
& \geq \liminf_{n\rightarrow \infty} \E(u_n^{(1)})+\E(u_n^{(2)})+\varepsilon \\
& \geq I_{\bar{q}\pm \varepsilon}+I_{(q-\bar{q})\pm \varepsilon} +\varepsilon.
\end{align*}
Taking $\varepsilon\rightarrow 0^+$, we get a contradiction with Lemma \ref{lem: I_(q1+q2)<I_q1+I_q2}.
\end{proof}
Now we are able to present the main existence result of this section.
\begin{lemma}
\label{lem: convergence of minimizing sequences}
Let $\lbrace u_n\rbrace_n$ be a minimizing sequence for $I_q$. If $I_q<0$, then there exists a sequence $\lbrace y_n\rbrace_n\subset\R$ such that the sequence $\lbrace \tilde{u}_n\rbrace_n$ defined by $\tilde{u}_n(x)=u_n(x+y_n)$ has a subsequence that converges in $H^{s/2}$ to a minimizer of $I_q$. In particular, there is a $q_0\geq 0$ such that for all $q>q_0$ the set of minimizers is non-empty.
\end{lemma}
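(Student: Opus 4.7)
The plan is to follow the same compactness-to-convergence template used for Lemma \ref{lem: existence of minimizers Z}. First I would observe that for any minimizing sequence $\lbrace u_n\rbrace_n$ of $I_q$, the sequence $\rho_n=\tfrac{1}{2}u_n^2$ satisfies the hypotheses of Lemma \ref{Lions} with $\mu=q$. By Lemma \ref{lem: vanishing} vanishing is ruled out, and by Corollary \ref{cor: no dichotomy A} dichotomy is ruled out, so along a subsequence compactness holds: there exist $\lbrace y_n\rbrace_n\subset\R$ and, for every $\varepsilon>0$, an $R=R(\varepsilon)$ with
\begin{equation*}
\int_{|x-y_n|\leq R}\tfrac{1}{2}u_n^2\dx \geq q-\varepsilon \quad \text{for all } n.
\end{equation*}
Set $\tilde{u}_n(x)=u_n(x+y_n)$, which is still a minimizing sequence, and observe that the above yields the tightness estimate $\int_{|x|\geq R}\tilde u_n^2\dx\leq 2\varepsilon$.

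Next I would extract a weak limit in $H^{s/2}$. By Lemma \ref{lem: min seq bounded} the sequence $\lbrace\tilde u_n\rbrace_n$ is bounded in $H^{s/2}$, so the Banach-Alaoglu theorem gives a weakly convergent subsequence with some limit $w\in H^{s/2}$. To upgrade this to strong $L^2$ convergence, I would use Rellich's theorem on each interval $[-R_k,R_k]$ with $R_k\to\infty$ to produce, via Cantor diagonalization, a subsequence that converges in $L^2_{\mathrm{loc}}$; combined with tightness this yields $\tilde u_n\to w$ in $L^2(\R)$, and hence $\mathcal{Q}(w)=q$. The Sobolev interpolation inequality as in \eqref{eq: convergence in L^(p+1)} then upgrades this to $L^{p+1}$ convergence (using $p<2s+1<\tfrac{1+s}{1-s}$ to justify the interpolation), which gives $\int_\R F(\tilde u_n)\dx\to \int_\R F(w)\dx$.

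It remains to show that $w$ is a minimizer and that the convergence is strong in $H^{s/2}$. By weak lower semi-continuity of the quadratic form $u\mapsto \int_\R u L u\dx$ (which, by Assumption (A), defines a Hilbert seminorm comparable to the $\dot H^{s/2}$ seminorm), we have
\begin{equation*}
\E(w)=\tfrac{1}{2}\int_\R wL w\dx -\int_\R F(w)\dx \leq \liminf_{n\to\infty}\E(\tilde u_n)=I_q,
\end{equation*}
and since $\mathcal{Q}(w)=q$ the reverse inequality holds by definition of $I_q$. Hence $w\in D_q$. Finally, equality $\E(\tilde u_n)\to \E(w)$ together with $\int F(\tilde u_n)\dx\to \int F(w)\dx$ forces $\int \tilde u_n L\tilde u_n\dx\to \int wL w\dx$; combined with $\|\tilde u_n\|_{L^2}\to\|w\|_{L^2}$, this gives convergence of the equivalent norm $\mathcal{J}(\tilde u_n)^{1/2}\to\mathcal{J}(w)^{1/2}$ on $H^{s/2}$. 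A Hilbert space argument (weak convergence plus norm convergence implies strong convergence) then yields $\tilde u_n\to w$ in $H^{s/2}$.

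The main obstacle I anticipate is the passage from $L^2_{\mathrm{loc}}$ convergence (available from Rellich) to global strong $L^2$ convergence: this step genuinely uses the tightness of $\lbrace\tilde u_n^2\rbrace_n$ delivered by the compactness alternative of Lemma \ref{Lions}, and one must be careful to balance the diagonal choice of subsequences with the choice of radii $R_k$. Everything else is either weak lower semi-continuity, interpolation, or the standard identification of strong convergence from weak convergence plus norm convergence.
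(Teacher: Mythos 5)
Your proof is correct and follows the same compactness-to-convergence template the paper uses (which itself defers the final steps to the proof of Lemma \ref{lem: existence of minimizers Z}): apply Lemma \ref{Lions} to $\tfrac12 u_n^2$, rule out vanishing and dichotomy via Lemma \ref{lem: vanishing} and Corollary \ref{cor: no dichotomy A}, obtain tightness, extract a weak $H^{s/2}$ limit and upgrade to strong $L^2$ via Rellich plus Cantor diagonalization, pass to $L^{p+1}$ convergence by interpolation to get $\mathcal{Q}(w)=q$ and convergence of the nonlinear term, use weak lower semi-continuity of the quadratic form to conclude $w\in D_q$, and finally upgrade to strong $H^{s/2}$ convergence by the norm-plus-weak-convergence argument. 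Your write-up is somewhat more explicit than the paper's about the last step (equality of the liminf forcing $\int \tilde u_n L\tilde u_n\dx\to\int wLw\dx$), but the substance is the same.
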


\begin{proof}
Let $\lbrace u_n\rbrace_n$ be a minimizing sequence for $I_q$. By Lemma \ref{lem: vanishing} and Corollary \ref{cor: no dichotomy A} we know that compactness occurs. That is, there is a subsequence of $\lbrace u_n\rbrace_n$, denoted $\lbrace u_n\rbrace_n$, and a sequence $\lbrace y_n\rbrace \subset \R$ such that for every $\varepsilon>0$, there exists $0<r<\infty$ satisfying for all $n\in \N$:
\begin{equation*}
\frac{1}{2}\int_{|x-y_n|\leq r} u_n^2\dx \geq q-\varepsilon.
\end{equation*}
This implies that for every $k\in \N$ we can find $r_k\in \R_+$ so that
\begin{equation*}
\frac{1}{2}\int_{|x|\leq r_k} \tilde{u}_n^2\dx \geq q-\frac{1}{k}.
\end{equation*}
Furthermore, by Lemma \ref{lem: min seq bounded} and the Rellich-Kondrachov theorem, for every $k\in \N$, there is a subsequence of $\lbrace \tilde{u}_n\rbrace_n$, denoted $\lbrace \tilde{u}_{k,n} \rbrace_n$, and a function $w_k\in L^2\left( [-r_k,r_k]\right)$ such that $\tilde{u}_{k,n}\rightarrow w_k$ in $ L^2\left( [-r_k,r_k]\right)$. From the inequalities above, we deduce that $\Q(w_k)\geq q-\frac{1}{k}$. Now the arguments in the proof of Lemma \ref{lem: existence of minimizers Z}, involving a Cantor diagonalization argument, can be straightforwardly be applied to the present case. 

Lemma \ref{lem: I_q<0} guarantees the existence of a $q_0\geq 0$ such that the assumption $I_q<0$ is satisfied for all $q>q_0$.
\end{proof}
It only remains to prove that minimizers of $I_q$ solve \eqref{eq: main equation2} with positive wave speed $c$.
\begin{lemma}
If $I_q<0$, any minimizer of $I_q$ is a solution to \eqref{eq: main equation2} with the wave speed $c>0$ being the Lagrange multiplier.
\end{lemma}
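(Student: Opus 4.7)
The plan is to apply the Lagrange multiplier principle to derive the Euler--Lagrange equation for $I_q$, and then to pair it against the minimizer itself in order to extract the sign of the multiplier. First I would verify that $\Q$ and $\E$ are $C^1$ functionals on $H^{s/2}$: $\Q$ is a continuous quadratic form with $\Q'(u)\varphi=\int_\R u\varphi\dx$, while $\E$ is $C^1$ because the quadratic part $\tfrac{1}{2}\int_\R uLu\dx$ is smooth by Assumption (A) and the map $u\mapsto\int_\R F(u)\dx$ is $C^1$ by the Sobolev embedding $H^{s/2}\hookrightarrow L^{p+1}$, valid for every $p\in(1,2s+1)$. Since $\Q(u)=q>0$ forces $u\not\equiv 0$, the derivative $\Q'(u)$ is nontrivial, so the standard Lagrange multiplier principle (cf.~\cite{zeidler}) yields $\lambda\in\R$ such that $Lu-f(u)=\lambda u$ in the weak sense. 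Setting $c:=-\lambda$ turns this identity into \eqref{eq: main equation2}.

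To prove $c>0$, I would pair \eqref{eq: main equation2} with $u$ to obtain
\begin{equation*}
\int_\R uLu\dx+2cq-\int_\R uf(u)\dx=0.
\end{equation*}
A direct computation from the definition \eqref{def: F} shows that $uf(u)=(p+1)F(u)$ holds pointwise in both cases (B1) and (B2). Combining this with the identity $\tfrac{1}{2}\int_\R uLu\dx=\E(u)+\int_\R F(u)\dx=I_q+\int_\R F(u)\dx$, the test-function relation rearranges to
\begin{equation*}
2cq=-2I_q+(p-1)\int_\R F(u)\dx.
\end{equation*}

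Lemma \ref{lem: I_q<0} gives $-I_q>0$, while Assumption (A) forces $m\geq 0$ and hence $\int_\R uLu\dx\geq 0$, from which one reads off $\int_\R F(u)\dx=\tfrac{1}{2}\int_\R uLu\dx-I_q\geq -I_q>0$. Together with $p>1$, these two facts force $2cq>0$, and since $q>0$ we conclude $c>0$. The whole argument is essentially mechanical once the Lagrange multiplier is in hand; the main point requiring attention is the sign of $c$, but this follows almost automatically from the strict negativity of $I_q$ (Lemma \ref{lem: I_q<0}) combined with the non-negativity of the symbol $m$, without any separate argument on the sign of $\int_\R F(u)\dx$.
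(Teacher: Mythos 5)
Your proposal is correct and is essentially the paper's argument: the paper also invokes the Lagrange multiplier principle to obtain $Lw+\gamma w-f(w)=0$ and then determines the sign of $\gamma$ from the identity $\langle\E'(w),w\rangle=\int wLw-(p+1)\int F(w)=2\E(w)-(p-1)\int F(w)$, which is exactly your pairing against $u$ written as the scaling derivative $\frac{\mathrm{d}}{\mathrm{d}\theta}\E(\theta w)\vert_{\theta=1}$. Both arguments reduce to $2cq=-2I_q+(p-1)\int F(u)\,\mathrm{d}x>0$ using $I_q<0$ and $\int F(u)\,\mathrm{d}x\geq -I_q>0$ (the latter from $m\geq 0$); your closing remark that no separate argument on the sign of $\int F(u)\,\mathrm{d}x$ is needed is slightly overstated, since that positivity is used, but you do derive it correctly in passing.
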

\begin{proof}
Let $w\in H^{s/2}$ be a minimizer of $I_q$. Then by the Lagrange multiplier principle there exists $\gamma \in \R$ such that
\begin{equation*}
\E'(w)+\gamma\Q'(w)=0,
\end{equation*}
where $\E'(w)$ and $\Q'(w)$ denote the Fr\'echet derivatives of $\E$ and $\Q$ at $w$. The the Fr\'echet derivatives $\E'(w)$ and $\Q'(w)$ are given by
\begin{align*}
&\E'(w)=L w-f(w) \\
&\Q'(w)=w. \nonumber
\end{align*}
Thus $w$ solves \eqref{eq: main equation2} with $c=\gamma$. Now it remains to prove $c=\gamma>0$. Note first that
\begin{align*}
\frac{\mathrm{d}}{\mathrm{d}\theta}\E(w\theta)\vert_{\theta=1} & =\int_{\R}wL w\dx-(p+1)\int_{\R}F(w)\dx \\
&=2\E(w)-(p-1)\int_{\R}F(w)\dx.
\end{align*}
But $\E(w)=I_q<0$ and $\int_{\R}F(w)\dx>0$, so that
\begin{equation*}
\frac{\mathrm{d}}{\mathrm{d}\theta}\E(w\theta)\vert_{\theta=1}<0.
\end{equation*}
By the definition of Fr\'echet derivative,
\begin{align*}
\frac{\mathrm{d}}{\mathrm{d}\theta}\E(w\theta)\vert_{\theta=1} & =\int_{\R}\E'(w)\cdot \frac{\mathrm{d}}{\mathrm{d}\theta} [w\theta]\vert_{\theta=1}\dx \\
& =-\gamma\int_{\R} \Q'(w)w\dx \\
&=-\gamma\int_{\R}w^2\dx,
\end{align*}
and thus $\gamma>0$.
\end{proof}
 
This concludes the proof of Theorem \ref{thm: existence} (i).

\section{Concentration-compactness for inhomogeneous nonlinearities}
\label{different nonlinearities}

In this section we will prove Theorem \ref{thm: existence} (iii). That is, we will consider nonlinearities of the form $g(u)=u+f(u)$, where $f$ is as in (B). Recall that we have set $\tilde{\U}(u)=\int_{\R} \frac{1}{2}u^2+F(u)\dx$ and 
\begin{equation*}
\tilde{\Gamma}_{\lambda}=\inf \lbrace \J(w) : w\in H^{s/2} \,\, \text{and} \,\, \tilde{\U}(w)=\lambda\rbrace.
\end{equation*}
For a minimizing sequence $\lbrace u_n\rbrace_n$ of $\tilde{\Gamma}_\lambda$ we will define the sequence $\lbrace \rho_n \rbrace_n$ as in Section \ref{existence}. As before, we will show existence of minimizers by precluding vanishing and dichotomy and appeal to Lemma \ref{Lions}. To do so, we need the following Lemma from \cite{zeng}:
\begin{lemma}
\label{lem: lower bound on p+1 norm Z}
There exists a real number $\lambda_0\geq 0$ such that for all $\lambda>\lambda_0$, any minimizing sequence $\lbrace u_n\rbrace_n$ of $\tilde{\Gamma}_\lambda$ satisfies for sufficiently large $n$:
\begin{equation*}
\int_{\R} |u_n|^{p+1}\dx\geq \delta,
\end{equation*}
for some $\delta>0$. If $0\leq m(\xi)\leq A_2|\xi|^s$ for $|\xi|\leq 1$ and $p\in (1,2s+1)$, then the statement holds for $\lambda_0=0$.
\end{lemma}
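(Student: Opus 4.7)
I would argue by contradiction: if $\int_\R |u_n|^{p+1}\dx\to 0$ along some subsequence, then $\int_\R F(u_n)\dx\to 0$ since $|F(u)|\leq C|u|^{p+1}$, and the constraint $\tilde{\U}(u_n)=\lambda$ forces $\tfrac{1}{2}\|u_n\|_{L^2}^2\to \lambda$. Combined with $\J(u)\geq \tfrac{1}{2}\|u\|_{L^2}^2$ (which holds because $L$ is nonnegative under Assumption (A)), this gives $\tilde{\Gamma}_\lambda=\lim_n \J(u_n)\geq \lambda$. Hence it suffices to exhibit, for each $\lambda>\lambda_0$, a competitor $v\in H^{s/2}$ with $\tilde{\U}(v)=\lambda$ and $\J(v)<\lambda$; by the identity $\J(v)=\E(v)+\tilde{\U}(v)$, this amounts to producing $v$ with $\tilde{\U}(v)=\lambda$ and $\E(v)<0$.

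For generic $p\in(1,\tfrac{1+s}{1-s})$ the plan is amplitude scaling. Fix $\phi\in H^{s/2}$ with $\int_\R F(\phi)\dx>0$, always attainable under Assumption (B) by choice of sign. The map $a\mapsto \tilde{\U}(a\phi)=\tfrac{a^2}{2}\|\phi\|_{L^2}^2+a^{p+1}\int_\R F(\phi)\dx$ is a continuous bijection $(0,\infty)\to(0,\infty)$, and for large $\lambda$ the nonlinear term dominates, giving $a(\lambda)\sim \lambda^{1/(p+1)}$. Since $\J(a\phi)=a^2\J(\phi)=O(\lambda^{2/(p+1)})$ and $p>1$ forces $2/(p+1)<1$, we obtain $\J(a\phi)<\lambda$ for every $\lambda$ beyond some threshold $\lambda_0<\infty$.

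For the subcritical case $p\in(1,2s+1)$, I would reuse the minimizers $u_q\in D_q$ of $I_q$ provided by Theorem \ref{thm: existence}(i) (proved in Section \ref{existence2}). Since $\E(u_q)=I_q<0$ by Lemma \ref{lem: I_q<0}, the identity $\J=\E+\tilde{\U}$ gives $\J(u_q)<\tilde{\U}(u_q)=:\lambda_q$, so $\tilde{\Gamma}_{\lambda_q}<\lambda_q$. The scaled competitor $v=au_q$ satisfies $\E(au_q)=\tfrac{a^2}{2}\int_\R u_q Lu_q\dx-a^{p+1}\int_\R F(u_q)\dx<0$ whenever $a^{p-1}$ exceeds the ratio $\int_\R u_q Lu_q\dx/(2\int_\R F(u_q)\dx)$; since $\E(u_q)<0$ forces this ratio below $1$, every $a\geq 1$ works. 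As $a$ varies in $[1,\infty)$, $\tilde{\U}(au_q)=a^2 q+a^{p+1}\int_\R F(u_q)\dx$ increases continuously from $\lambda_q$ to $\infty$, so every $\lambda\geq \lambda_q$ is covered. Coupled with the claim $\lambda_q\to 0^+$ as $q\to 0^+$, this yields $\tilde{\Gamma}_\lambda<\lambda$ for every $\lambda>0$, admitting $\lambda_0=0$.

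The main step to control will be verifying $\lambda_q\to 0^+$. The test function $\sqrt{q/\Q(\phi_0)}\phi_0$ applied in $\E$ yields $|I_q|=O(q)$, and Lemma \ref{lem: min seq bounded} gives a uniform bound on $\|u_q\|_{H^{s/2}}$ for $q\in(0,1]$. Subcriticality $p<2s+1$ ensures $(p-1)/(2(p+1))<s/2$, allowing a Sobolev interpolation bound $\|u_q\|_{L^{p+1}}^{p+1}\lesssim \|u_q\|_{L^2}^{\alpha(p+1)}\|u_q\|_{H^{s/2}}^{(1-\alpha)(p+1)}$ with $\alpha>0$; since $\|u_q\|_{L^2}^2=2q\to 0$, this forces $\int_\R F(u_q)\dx\to 0$ and therefore $\lambda_q=q+\int_\R F(u_q)\dx\to 0$.
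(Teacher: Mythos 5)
Your proof is correct and takes essentially the same reduction as the paper: since $\J=\E+\tilde{\U}$, showing that a minimizing sequence has $\liminf\|u_n\|_{L^{p+1}}^{p+1}>0$ reduces to showing $\bar{\Gamma}_\lambda:=\tilde{\Gamma}_\lambda-\lambda<0$, and for $p\geq 2s+1$ your amplitude scaling of a fixed $\phi$ with $\int F(\phi)>0$ is exactly the paper's argument (which in this regime produces a finite, possibly nonzero, $\lambda_0$).

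For the subcritical case $p\in(1,2s+1)$ you take a genuinely different route. The paper reuses the two-parameter concentration scaling $\varphi_t(x)=\sqrt{t}\varphi(tx)$ from Lemma~\ref{lem: I_q<0}, which for fixed $\Q(\varphi_t)=q$ makes $\int F(\varphi_t)\to 0$ while keeping $\E(\varphi_t)<0$; since $\tilde{\U}(\varphi_t)=q+\int F(\varphi_t)$ can be made to hit any $\lambda>0$ by adjusting $q$ and $t$, this gives $\bar\Gamma_\lambda<0$ for all $\lambda>0$ directly from the test family, independently of the existence theory for $I_q$. You instead invoke the minimizers $u_q\in D_q$ of $I_q$ already constructed in Section~\ref{existence2}, set $\lambda_q=\tilde{\U}(u_q)$, and use amplitude scaling $a\geq 1$ together with the limit $\lambda_q\to 0^+$ to cover every $\lambda>0$. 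Your key supporting claim, $\lambda_q\to 0^+$, is correct: the uniform $H^{s/2}$ bound on $u_q$ for $q\in(0,1]$ follows from the coercivity estimate in the proof of Lemma~\ref{lem: min seq bounded} (since $I_q<0$), and Sobolev interpolation then gives $\int F(u_q)\lesssim q^{\alpha(p+1)/2}\to 0$ because $p<2s+1$ ensures $\alpha>0$. One minor imprecision: the remark that the test function $\sqrt{q/\Q(\phi_0)}\phi_0$ yields $|I_q|=O(q)$ is not quite what that test function gives you---it provides only an upper bound $I_q\leq O(q)$, and the matching lower bound $I_q\geq -Cq$ comes from the coercivity estimate, not the competitor---but this claim is not actually used in your argument and does not affect its validity. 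Your route is logically sound since Section~\ref{existence2} precedes the lemma, but it is somewhat heavier machinery than the paper's self-contained scaling family; the trade-off is that your version makes the mechanism producing competitors with $\E<0$ and small $\tilde{\U}$ more transparent, since you exhibit them as scalings of genuine minimizers rather than of explicit profile rescalings.
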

\begin{proof}
Observe that $\J(u)-\tilde{\U}(u)=\E(u)$ and any minimizing sequence for $\tilde{\Gamma}_\lambda$ is also a minimizing sequence for $\bar{\Gamma}_\lambda=\inf\lbrace \E(u) : u\in H^{s/2} \,\, \text{and} \,\, \tilde{\U}(u)=\lambda\rbrace$. If $\bar{\Gamma}_\lambda<0$ then the result follows from the proof of Lemma \ref{lem: min seq bounded} (ii). That there exists $\lambda_0\geq 0$ such that $\bar{\Gamma}_\lambda<0$ for all $\lambda>\lambda_0$ can be proved exactly in the same way as $I_q<0$ for $q>q_0$ was proved in Lemma \ref{lem: I_q<0}: Choose $\varphi\in H^{s/2}$ such that $F(\varphi)$ is non-negative and let $a(\lambda)>0$ be such that $\tilde{\U}(a(\lambda)\varphi)=\lambda$ for $\lambda>0$. Then $a(\lambda)\rightarrow \infty$ as $\lambda\rightarrow \infty$. This proves the existence of a $\lambda_0\geq 0$ as in the statement.

Assume now that $0\leq m(\xi)\leq A_2|\xi|^s$ and let $1<p<2s+1$. Let $q>0$. From the proof of Lemma \ref{lem: I_q<0}, we know that for any $\varepsilon>0$, we can find $\varphi\in H^{s/2}$ such that $\Q(\varphi)=q$, $F(\varphi)$ is non-negative, $\E(\varphi)<0$ and $\int_\R F(\varphi)\dx<\varepsilon$. The latter inequality implies that $|\tilde{\U}(\varphi)-q|<\varepsilon$. As $\varepsilon>0$ and $q>0$ were arbitrary, this shows that $\bar{\Gamma}_\lambda<0$ for every $\lambda>0$ when $1<p<2s+1$.
\end{proof}
Lemma \ref{lem: lower bound on p+1 norm Z} illustrates the difference between homogeneous and inhomogeneous nonlinearities in \eqref{eq: main equation 2}, and that there is a change in behaviour for the inhomogeneous case at the critical exponent $p=2s+1$.
\begin{lemma}
Vanishing does not occur.
\end{lemma}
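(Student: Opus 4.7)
The plan is to mirror the argument used in Lemma \ref{lem: no vanishing Z}, adapting it to the inhomogeneous setting where the lower bound on $\|u_n\|_{L^{p+1}}$ now comes from Lemma \ref{lem: lower bound on p+1 norm Z} rather than directly from the constraint.

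First, I would observe that any minimizing sequence $\{u_n\}_n$ for $\tilde{\Gamma}_\lambda$ is bounded in $H^{s/2}$. This is immediate because Assumption (A) guarantees that $\J(\cdot)^{1/2}$ is equivalent to the standard $H^{s/2}$-norm, and $\J(u_n) \to \tilde{\Gamma}_\lambda < \infty$. So there exists $K > 0$ with $\|u_n\|_{H^{s/2}} \leq K$ for all $n$.

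Next, invoking Lemma \ref{lem: lower bound on p+1 norm Z} (whose hypothesis $\lambda > \lambda_0$ we are assuming), there exists $\delta > 0$ such that $\|u_n\|_{L^{p+1}} \geq \delta^{1/(p+1)}$ for all sufficiently large $n$. With these two bounds in hand, Lemma \ref{lem: lower bound on p+1 norm} applies and produces some $\eta = \eta(K,\delta) > 0$ such that
\begin{equation*}
\sup_{y \in \R} \int_{y-2}^{y+2} |u_n(x)|^{p+1} \dx \geq \eta
\end{equation*}
for all sufficiently large $n$.

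Finally, I would convert this local lower bound on the $L^{p+1}$-mass into a lower bound on the local mass of $\rho_n = u_n^2 + (L^{1/2}u_n)^2$. Applying the Sobolev embedding $H^{s/2}(y-2,y+2) \hookrightarrow L^{p+1}(y-2,y+2)$ on each window (the embedding constant being independent of $y$ by translation), we obtain a constant $C > 0$ such that
\begin{equation*}
C\eta^{2/(p+1)} \leq \sup_{y \in \R} C\left(\int_{y-2}^{y+2}|u_n|^{p+1}\dx\right)^{2/(p+1)} \leq \sup_{y \in \R} \int_{y-2}^{y+2} \rho_n \dx
\end{equation*}
for all large $n$, which directly contradicts the vanishing alternative in Lemma \ref{Lions}. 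The only real subtlety, compared to Lemma \ref{lem: no vanishing Z}, is the need to replace the direct constraint $\U(u_n) = \lambda$ (which was used there to produce a lower bound on $\|u_n\|_{L^{p+1}}$) with Lemma \ref{lem: lower bound on p+1 norm Z}; once that input is in place the rest is a verbatim reproduction of the earlier argument.
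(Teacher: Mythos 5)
Your proof is correct and follows exactly the paper's approach: the paper's one-line proof also invokes Lemma \ref{lem: lower bound on p+1 norm Z} to supply the $L^{p+1}$ lower bound, then appeals to Lemma \ref{lem: lower bound on p+1 norm} and the embedding argument of Lemma \ref{lem: no vanishing Z}. You have merely spelled out the intermediate steps (the $H^{s/2}$-bound from $\J(u_n)\to\tilde{\Gamma}_\lambda$, the local Sobolev embedding converting $L^{p+1}$-mass into $\rho_n$-mass), all of which are the correct ingredients.
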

\begin{proof}
Having established Lemma \ref{lem: lower bound on p+1 norm Z} the conditions in Lemma \ref{lem: lower bound on p+1 norm} are satisfied and the result follows from the proof of Lemma \ref{lem: no vanishing Z}.
\end{proof}

\begin{lemma}
\label{lem: T_tq<tT_q Z}
If $\lambda>\lambda_0$ and $\theta>1$ then $\tilde{\Gamma}_{\theta \lambda}<\theta \tilde{\Gamma}_\lambda$
\end{lemma}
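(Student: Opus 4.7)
The plan is to follow the scaling strategy used in Lemma \ref{lem: Zeng 4} and Lemma \ref{lem: I_(q1+q2)<I_q1+I_q2}, but with the twist that the constraint $\tilde{\U}$ is inhomogeneous. The core observation is that when we dilate $u \mapsto au$ with $a > 0$, we get $\J(au) = a^{2}\J(u)$ and $\tilde{\U}(au) = \tfrac{a^{2}}{2}\int_{\R}u^{2}\dx + a^{p+1}\int_{\R}F(u)\dx$, exploiting $F(au) = a^{p+1}F(u)$ for $a > 0$ under Assumption (B). Since $p+1 > 2$, the nonlinear term grows faster than the $L^{2}$-term as $a$ increases, so the scaling factor required to push $\tilde{\U}$ from $\lambda$ up to $\theta\lambda$ with $\theta > 1$ is strictly less than $\sqrt{\theta}$, provided $\int_{\R}F(u)\dx > 0$.

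To turn this into a proof, I would take a minimizing sequence $\{u_{n}\}$ for $\tilde{\Gamma}_{\lambda}$ and first establish that $P_{n} := \int_{\R}F(u_{n})\dx$ has a strictly positive lower bound. By the proof of Lemma \ref{lem: lower bound on p+1 norm Z}, $\bar{\Gamma}_{\lambda} = \tilde{\Gamma}_{\lambda} - \lambda < 0$ for $\lambda > \lambda_{0}$, so $\tilde{\Gamma}_{\lambda} < \lambda$; combining this with the non-negativity of $\int_{\R}u L u\dx$ gives $\tfrac{1}{2}\int_{\R}u_{n}^{2}\dx \leq \J(u_{n}) \to \tilde{\Gamma}_{\lambda}$, whence $P_{n} = \lambda - \tfrac{1}{2}\int_{\R}u_{n}^{2}\dx$ satisfies $\liminf_{n} P_{n} \geq \lambda - \tilde{\Gamma}_{\lambda} > 0$.

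Next, for each sufficiently large $n$, I would define $a_{n}$ as the unique positive solution of $\tilde{\U}(au_{n}) = \theta\lambda$; existence and uniqueness follow because $a \mapsto \tfrac{a^{2}}{2}\int_{\R}u_{n}^{2}\dx + a^{p+1}P_{n}$ is continuous and strictly increasing on $(0,\infty)$ once $P_{n} > 0$. Evaluating at $a = \sqrt{\theta}$ yields $\tilde{\U}(\sqrt{\theta}\,u_{n}) - \theta\lambda = (\theta^{(p+1)/2} - \theta)P_{n} > 0$, so $a_{n} \in (1,\sqrt{\theta})$ and the function $a_{n}u_{n}$ is admissible for $\tilde{\Gamma}_{\theta\lambda}$, giving $\tilde{\Gamma}_{\theta\lambda} \leq \J(a_{n}u_{n}) = a_{n}^{2}\J(u_{n})$.

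Finally, to upgrade the inequality to a strict one, I would extract a subsequence along which $\int_{\R}u_{n}^{2}\dx \to M_{\ast}$ and $P_{n} \to P_{\ast}$ (both sequences are bounded). The defining relation forces $a_{n} \to a_{\ast}$ solving $\tfrac{a_{\ast}^{2}}{2}M_{\ast} + a_{\ast}^{p+1}P_{\ast} = \theta\lambda$, and the same comparison as above shows $a_{\ast} < \sqrt{\theta}$, i.e. $a_{\ast}^{2} < \theta$. Since $\tilde{\Gamma}_{\lambda} > 0$ (otherwise a minimizing sequence would tend to zero in $H^{s/2}$ and violate $\tilde{\U}(u_{n}) = \lambda > 0$ via Sobolev embedding), we obtain $\tilde{\Gamma}_{\theta\lambda} \leq a_{\ast}^{2}\tilde{\Gamma}_{\lambda} < \theta\tilde{\Gamma}_{\lambda}$. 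The main obstacle here is precisely the first step: securing a strictly positive limit for $P_{n}$, because the bound $\int_{\R}|u_{n}|^{p+1}\dx \geq \delta$ from Lemma \ref{lem: lower bound on p+1 norm Z} controls only $|P_{n}|$ in case (B2); this is why routing the argument through $\tilde{\Gamma}_{\lambda} < \lambda$, rather than through the $L^{p+1}$-bound directly, is essential.
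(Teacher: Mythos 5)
Your proposal is correct and follows essentially the same scaling strategy as the paper: choose $a_n$ with $\tilde{\U}(a_n u_n)=\theta\lambda$, show $a_n^2$ stays strictly below $\theta$, and conclude from $\tilde{\Gamma}_{\theta\lambda}\leq a_n^2\J(u_n)$. The one place you diverge is in securing $\liminf_n\int_\R F(u_n)\dx>0$: the paper appeals to the \emph{proof} (not the statement) of Lemma~\ref{lem: lower bound on p+1 norm Z}, noting that the contradiction argument there actually bounds $\int_\R F(u_n)\dx$ from below, whereas you extract it directly from the constraint identity $P_n=\lambda-\tfrac12\int u_n^2\dx\geq\lambda-\J(u_n)\to\lambda-\tilde{\Gamma}_\lambda>0$ combined with $\bar{\Gamma}_\lambda<0$. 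Your route is cleaner and avoids the issue you correctly flag, namely that the \emph{stated} bound $\int|u_n|^{p+1}\geq\delta$ only controls $|P_n|$ in case (B2); your passage to a convergent subsequence in the last step is also a tidier way to justify the strict inequality than the paper's ``$\alpha_n\geq 1+\varepsilon$ for $n$ large''.
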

\begin{proof}
Let $\lbrace u_n\rbrace_n$ be a minimizing sequence for $\tilde{\Gamma}_\lambda$. Choose $\alpha_n>0$ such that $\tilde{\U}(\alpha_n u_n)=\theta \lambda$. Since $\tilde{\U}(u_n)=\lambda$ we find
\begin{equation*}
\alpha_n^2=\theta-\frac{\alpha_n^2(\alpha_n^{p-1}-1)}{\lambda}\int_{\R}F(u_n)\dx.
\end{equation*}
Thus
\begin{align*}
\tilde{\Gamma}_{\theta \lambda}&\leq \J(\alpha_n u_n)=\alpha_n^2\J(u_n) \nonumber \\
&=\left(\theta-\frac{\alpha_n^2(\alpha_n^{p-1}-1)}{\lambda}\int_{\R}F(u_n)\dx\right)\J(u_n).
\end{align*}
Considering the proof of Lemma \ref{lem: lower bound on p+1 norm Z} we see that the statement of that lemma is true with $F(u_n)$ replacing $|u|^{p+1}$. Thus for there is a $\delta>0$ such that $\int_{\R}F(u_n)\dx\geq\delta$ for all sufficiently large $n$. Furthermore, since $\theta>1$ and $\tilde{\U}(\alpha_n u_n)=\theta \lambda$ it is clear that $\alpha_n\geq 1+\varepsilon$ for some $\varepsilon>0$ for all sufficiently large $n$. It follows that
\begin{equation*}
\theta-\frac{\alpha_n^2(\alpha_n^{p-1}-1)}{\lambda}\int_{\R}F(u_n)\dx<\theta,
\end{equation*}
and so
\begin{equation*}
\tilde{\Gamma}_{\theta \lambda}\leq \alpha_n^2\J(u_n)<\theta \tilde{\Gamma}_\lambda.
\end{equation*}
\end{proof}

\begin{lemma}
\label{lem: T_q<T_q1+T_q2}
If $\lambda>\lambda_0$, $\lambda_1>0$, $\lambda_2>0$ and $\lambda_1+\lambda_2=\lambda$, then $\tilde{\Gamma}_\lambda<\tilde{\Gamma}_{\lambda_1}+\tilde{\Gamma}_{\lambda_2}$.
\end{lemma}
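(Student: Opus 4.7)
The plan is to mirror the proof of Lemma \ref{lem: Zeng 4}, but with the exact homogeneity $\Gamma_{\theta\alpha}=\theta^{2/(p+1)}\Gamma_\alpha$ replaced by the scaling inequality $\tilde{\Gamma}_{\theta\lambda}<\theta\tilde{\Gamma}_\lambda$ just established in Lemma \ref{lem: T_tq<tT_q Z}. The technical price is that Lemma \ref{lem: T_tq<tT_q Z} requires its base argument to lie above the threshold $\lambda_0$, so the main obstacle is making sure every application of it that we need is legitimate.

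Without loss of generality, assume $\lambda_1\geq \lambda_2>0$, so that $\lambda_1\geq \lambda/2$. Fixing the threshold $\lambda_0$ (from Lemma \ref{lem: lower bound on p+1 norm Z}) large enough that $\lambda>\lambda_0$ forces $\lambda_1>\lambda_0$ (e.g.\ by enlarging it by a factor of two if necessary), set
\begin{equation*}
\theta=\frac{\lambda}{\lambda_1}=1+\frac{\lambda_2}{\lambda_1}\in (1,2].
\end{equation*}
Lemma \ref{lem: T_tq<tT_q Z} applied with base $\lambda_1>\lambda_0$ and this $\theta$ then yields
\begin{equation*}
\tilde{\Gamma}_\lambda=\tilde{\Gamma}_{\theta\lambda_1}<\theta\tilde{\Gamma}_{\lambda_1}=\tilde{\Gamma}_{\lambda_1}+\frac{\lambda_2}{\lambda_1}\tilde{\Gamma}_{\lambda_1}.
\end{equation*}

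To close the argument, it is enough to show $(\lambda_2/\lambda_1)\tilde{\Gamma}_{\lambda_1}\leq \tilde{\Gamma}_{\lambda_2}$, since plugging this into the previous line immediately gives the desired strict inequality $\tilde{\Gamma}_\lambda<\tilde{\Gamma}_{\lambda_1}+\tilde{\Gamma}_{\lambda_2}$. The case $\lambda_1=\lambda_2$ is trivial. When $\lambda_1>\lambda_2$, set $\theta'=\lambda_1/\lambda_2>1$; applying Lemma \ref{lem: T_tq<tT_q Z} with base $\lambda_2$ (also arranged to exceed $\lambda_0$ by the same enlargement) gives
\begin{equation*}
\tilde{\Gamma}_{\lambda_1}=\tilde{\Gamma}_{\theta'\lambda_2}<\theta'\tilde{\Gamma}_{\lambda_2}=\frac{\lambda_1}{\lambda_2}\tilde{\Gamma}_{\lambda_2},
\end{equation*}
which rearranges to $(\lambda_2/\lambda_1)\tilde{\Gamma}_{\lambda_1}<\tilde{\Gamma}_{\lambda_2}$, and the conclusion follows.

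The conceptual difficulty, and what distinguishes this from the homogeneous case Lemma \ref{lem: Zeng 4}, is entirely the threshold $\lambda_0$: the scaling $u\mapsto \alpha u$ no longer preserves the constraint set in a scale-invariant way because $\tilde{\U}$ mixes the quadratic term $u^2/2$ with the $(p+1)$-homogeneous term $F(u)$. One must therefore take $\lambda_0$ in the statement of this lemma to be (at least) the threshold forcing every piece of any splitting $\lambda=\lambda_1+\lambda_2$ to lie above the threshold of Lemma \ref{lem: T_tq<tT_q Z}; once this calibration is made, the rest of the proof is the direct analogue of Lemma \ref{lem: Zeng 4}.
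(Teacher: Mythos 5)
Your proof has a genuine gap, and it is precisely where you flag the "technical price." You need to apply Lemma~\ref{lem: T_tq<tT_q Z} with base $\lambda_2$ (the smaller piece), and that lemma requires its base to exceed $\lambda_0$. But $\lambda_2$ can be arbitrarily small even when $\lambda>\lambda_0$ is very large, so no enlargement of $\lambda_0$ can force $\lambda_2>\lambda_0$ for all splittings $\lambda=\lambda_1+\lambda_2$. Your suggested fix ("take $\lambda_0$ large enough that every piece of any splitting lies above the threshold") is simply impossible. Moreover, the lemma really does need to hold for small $\lambda_2$, because it is used to preclude dichotomy, where the dichotomy parameter $\bar\mu$ produces splittings with no lower bound on the smaller piece.

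The paper sidesteps this entirely by observing that $\tilde\Gamma_\lambda<\tilde\Gamma_{\lambda_1}+\tilde\Gamma_{\lambda_2}$ is equivalent (because $\lambda=\lambda_1+\lambda_2$) to $\bar\Gamma_\lambda<\bar\Gamma_{\lambda_1}+\bar\Gamma_{\lambda_2}$, where $\bar\Gamma_\lambda=\tilde\Gamma_\lambda-\lambda$. Since $\lambda>\lambda_0$ gives $\bar\Gamma_\lambda<0$, the case $\bar\Gamma_{\lambda_1},\bar\Gamma_{\lambda_2}\ge 0$ is trivial. When, say, $\bar\Gamma_{\lambda_1}<0$, the sign condition $\bar\Gamma_{\lambda_1}<0$ (not the inequality $\lambda_1>\lambda_0$) is exactly what drives the $L^{p+1}$ lower bound in Lemma~\ref{lem: lower bound on p+1 norm Z} and hence the scaling inequality of Lemma~\ref{lem: T_tq<tT_q Z} at base $\lambda_1$, even if $\lambda_1\le\lambda_0$. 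With that in hand the argument of Lemma~\ref{lem: Zeng 4} goes through: if $\bar\Gamma_{\lambda_2}\ge 0$ one has $(\lambda_2/\lambda_1)\bar\Gamma_{\lambda_1}<0\le\bar\Gamma_{\lambda_2}$ for free, and if $\bar\Gamma_{\lambda_2}<0$ the scaling is available at base $\lambda_2$ as well. The point you missed is that the correct hypothesis to propagate to the pieces is negativity of $\bar\Gamma$, not a lower bound on the argument $\lambda_i$.
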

\begin{proof}
Observe that $\bar{\Gamma}_\lambda=\tilde{\Gamma}_\lambda-\lambda$ and it is sufficient to prove that $\bar{\Gamma}_\lambda<\bar{\Gamma}_{\lambda_1}+\bar{\Gamma}_{\lambda_2}$. 

From the proof of Lemma \ref{lem: lower bound on p+1 norm Z} we conclude that $\bar{\Gamma}_\lambda<0$. Hence if $\bar{\Gamma}_{\lambda_1}, \bar{\Gamma}_{\lambda_2}\geq 0$ the claim is trivial. Assume therefore that, say, $\bar{\Gamma}_{\lambda_1}<0$. Then Lemma \ref{lem: T_tq<tT_q Z} holds for $\bar{\Gamma}_{\lambda_1}$. From here the result can be proven in the same fashion as in Lemma \ref{lem: I_(q1+q2)<I_q1+I_q2} or equation \eqref{eq: Gamma_a<Gamma_b+Gamma_(a-b)}.
\end{proof}

Lemma \ref{lem: dichotomy Z} and Lemma \ref{lem: cons dichotomy Z} carry over to the present case straightforwardly, and from Lemma \ref{lem: T_q<T_q1+T_q2} we then conclude that dichotomy does not occur. The arguments in Lemma \ref{lem: existence of minimizers Z} then gives the existence of minimizers. This concludes the proof of Theorem \ref{thm: existence} (iii).

\section{Stability and Regularity}
\label{stable regular}
In this section we will prove Theorem \ref{thm: stability} and discuss the regularity of solutions to equations \eqref{eq: main equation2}, \eqref{eq: main equationZ} and \eqref{eq: secondary eq 2}.
\begin{proof}[Proof of Theorem \ref{thm: stability}]
We prove it only for $D_q$; the proofs for $G_\lambda$ and $\tilde{G}_\lambda$ are equivalent. Assume the statement is false. That is, there exist a number $\varepsilon>0$, a sequence $\lbrace \varphi_n\rbrace_n\subset H^{s/2}$, and a sequence of times $\lbrace t_n\rbrace_n\subset \R$ such that
\begin{equation*}
\inf_{w\in D_q}\|\varphi_n - w\|_{H^{s/2}}<\frac{1}{n}
\end{equation*}
and
\begin{equation*}
\inf_{w\in D_q} \|u_n(\cdot, t_n)-w\|_{H^{s/2}}>\varepsilon
\end{equation*}
for all $n$, where $u_n(x,t)$ solves \eqref{eq: main equation} with $u_n(x,0)=\varphi_n(x)$. Since $\varphi_n\rightarrow D_q$ in $H^{s/2}$, $\E(w)=I_q$ and $\Q(w)=q$, we have $\E(\varphi_n)\rightarrow I_q$ and $\Q(\varphi_n)\rightarrow q$.

Choose a sequence $\lbrace \alpha_n\rbrace_n \subset \R$ such that $\Q(\alpha_n \varphi_n)=q$ for all $n\in \N$. Then $\alpha_n\rightarrow 1$. As $\E(u)$ and $\Q(u)$ are independent of $t$ if $u$ solves \eqref{eq: main equation}, the sequence $f_n:=\alpha_n u_n(\cdot, t_n)$ satisfies $\Q(f_n)=q$ for all $n$ and
\begin{equation*}
\lim_{n\rightarrow \infty} \E(f_n)=\lim_{n\rightarrow \infty} \E(u_n(\cdot, t_n))= \lim_{n\rightarrow \infty} \E(\varphi_n)=I_q.
\end{equation*}
The sequence $\lbrace f_n\rbrace_n$ is therefore a minimizing sequence for $\E$ and from Theorem \ref{thm: existence} and the translation invariance of the functionals $\E$ and $\Q$ we deduce that there is an $N\in \N$ such that for $n\geq N$, there exists $w_n\in D_q$ satisfying
\begin{equation*}
\|f_n-w_n\|_{H^{s/2}}\leq \frac{\varepsilon}{2}.
\end{equation*}
So for $n\geq N$ we have
\begin{align*}
\varepsilon &\leq \|u_n(\cdot,t_n)-w_n\|_{H^{s/2}} \\
&\leq \|u_n(\cdot,t_n)-f_n\|_{H^{s/2}}+\|f_n-w_n\|_{H^{s/2}} \\
&\leq |1-\alpha_n|\|u_n(\cdot,t_n)\|_{H^{s/2}}+\frac{\varepsilon}{2}.
\end{align*}
Taking $n\rightarrow \infty$ we get $\varepsilon\leq \varepsilon/2$, a contradiction.
\end{proof}
Next we consider regularity of the solutions. Since $L : H^{r}\rightarrow H^{r-s}$ for all $r\in \R$, $L u\in H^{-s/2}$ for $u\in H^{s/2}$ and the solutions we have found may be only distributional solutions of \eqref{eq: main equation2} and \eqref{eq: main equationZ}. However, the solutions inherit regularity from the equations themselves.
\begin{lemma}
\label{lem: regularity}
If $f$ satisfies (B) with $p\in (1, \frac{1+s}{1-s})$, then any solution $u\in H^{s/2}$ of \eqref{eq: main equation2}, \eqref{eq: main equationZ} or \eqref{eq: secondary eq 2} is in $L^\infty$.
\end{lemma}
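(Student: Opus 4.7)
The plan is to rewrite each of the three equations in resolvent form. In each case one has $(L+a)u=bf(u)$ for suitable constants: $a=c$, $b=1$ for \eqref{eq: main equation2}; $a=1$, $b=c^{-1}$ for \eqref{eq: main equationZ}; and $a=(c-1)/c$, $b=c^{-1}$ for \eqref{eq: secondary eq 2} (all with $a>0$ for the wave speeds $c$ supplied by Theorem \ref{thm: existence}; if $a\leq 0$ one replaces $(L+a)$ with $(L+M)$ for large $M>0$ and absorbs $(M-a)u$ on the right-hand side, with no material change to the argument). Writing $K:=(L+a)^{-1}$, the equation becomes
\begin{equation*}
u \;=\; b\,K f(u).
\end{equation*}
By Assumption (A) the symbol $(m(\xi)+a)^{-1}$ is bounded and satisfies $(m(\xi)+a)^{-1}\leq C(1+|\xi|^s)^{-1}$, so $K:H^r\to H^{r+s}$ is bounded for every $r\in\R$; this is the smoothing property that drives the argument.

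I would then bootstrap in the Sobolev scale. The case $s>1$ is trivial, since in one dimension $H^{s/2}\hookrightarrow L^\infty$ as soon as $s/2>1/2$; assume therefore $s\leq 1$. Set $\sigma_0=s/2$ and suppose $u\in H^{\sigma_n}$ with $\sigma_n\leq 1/2$. Sobolev embedding gives $u\in L^{q_n}$ with $q_n=2/(1-2\sigma_n)$, whence $f(u)\in L^{q_n/p}$ since $|f(u)|\lesssim |u|^p$. If $q_n/p\geq 2$, then $f(u)\in L^2$ and one step of $K$ directly yields $u\in H^s$. Otherwise the dual Sobolev embedding $L^{q_n/p}\hookrightarrow H^{-\tau_n}$ with $\tau_n=p/q_n-1/2$ gives $u=bKf(u)\in H^{\sigma_{n+1}}$ for $\sigma_{n+1}:=s-\tau_n$, and a direct calculation shows
\begin{equation*}
\sigma_{n+1}-\sigma_n \;=\; \frac{(1+s)-p(1-s)}{2} + (p-1)\left(\sigma_n-\frac{s}{2}\right).
\end{equation*}
Under Assumption (B) the first term is strictly positive because $p<(1+s)/(1-s)$, and the second term is nonnegative and grows linearly in $\sigma_n$. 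Hence the iteration advances $\sigma_n$ by at least a fixed positive amount per step, reaches $\sigma_n>1/2$ in finitely many steps, and then the one-dimensional embedding $H^{\sigma_n}\hookrightarrow L^\infty$ concludes.

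The main obstacle I anticipate is the bookkeeping around the two endpoints $\sigma_n=1/2$ and $q_n/p=2$, where Sobolev exponents degenerate or the case split flips. These are absorbed routinely using $H^{1/2}\hookrightarrow L^q$ for every finite $q$ and shrinking $q_n$ slightly to remain in the open range of the relevant embedding; the quantitative lower bound on $\sigma_{n+1}-\sigma_n$ is robust under such perturbations. With this in hand the three equations are handled uniformly through the common resolvent identity, the wave speed entering only through the constants $a$ and $b$, and Lemma \ref{lem: regularity} follows.
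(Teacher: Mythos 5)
Your strategy---rewriting the equation as a resolvent fixed point $u=bKf(u)$ with $K=(L+a)^{-1}$ and bootstrapping in the Sobolev scale---is a genuinely different route from the paper's, which iterates in the Fourier variable, using Hausdorff--Young and H\"older to push $\widehat u$ into $L^q$ spaces with $q$ decreasing toward $1$, and then concludes $u\in L^\infty$ from $\widehat u\in L^1$. Unfortunately, your version does not close. The operator $K$ gains exactly $s$ derivatives, so the Sobolev index obtainable from $u=bKf(u)$ is hard-capped at $s$. Concretely, your recursion $\sigma_{n+1}=s-\tau_n$ is only in force while $\tau_n>0$, i.e.\ while $\sigma_n<\tfrac{p-1}{2p}$; once $q_n/p\geq 2$ the best input you can feed $K$ is $f(u)\in L^2=H^0$, and the output is $u\in H^s$, full stop. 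Since $\tfrac{p-1}{2p}<s$ (for $0<s<1/2$ this follows from $p<\tfrac{1+s}{1-s}<\tfrac{1}{1-2s}$, and for $s\geq 1/2$ it is immediate from $\tfrac{p-1}{2p}<\tfrac12$), this ceiling is always reached before $\sigma_n$ can exceed $s$, and once there the loop $u\in H^s\Rightarrow f(u)\in L^2\Rightarrow Kf(u)\in H^s$ is stationary. Hence for $0<s\leq 1/2$ the assertion that the iteration ``reaches $\sigma_n>1/2$ in finitely many steps'' is false: $\sigma_n\leq s\leq 1/2$ for all $n$, and the closing embedding $H^{\sigma_n}\hookrightarrow L^\infty$ never becomes available. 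This is exactly the regime of low-order symbols that is the point of the paper, so the gap is not at a peripheral endpoint.

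What your bootstrap \emph{does} prove is $u\in H^s$, which is the paper's Theorem~\ref{thm: regularity}; but in the paper that theorem is a \emph{consequence} of $u\in L^\infty$, not a route to it. The missing ingredient must be something that is not of the form ``$u$ gains Sobolev derivatives.'' The paper supplies it by exploiting, via H\"older in the Fourier variable, the $L^{q}$-integrability of $(1+m(\xi))^{-1}$ for every $q>1/s$; this lets one place $\widehat u$ in $L^{q}$ with $q$ strictly below $2$, a kind of gain that the dual Sobolev embedding $L^{q_n/p}\hookrightarrow H^{-\tau_n}$ simply cannot express once $q_n/p\geq 2$.
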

\begin{proof}
Let $p\in (1, \frac{1+s}{1-s})$ be given. Applying the Fourier transform on both sides of \eqref{eq: main equation2}, \eqref{eq: main equationZ} or \eqref{eq: secondary eq 2} yields
\begin{equation*}
(k+m(\xi))\widehat{u}=\widehat{f(u)},
\end{equation*}
for some constant $k>0$ depending on the wave speed $c$. Without loss of generality, we may assume $k=1$, and so
\begin{equation*}
\widehat{u}=\frac{\widehat{f(u)}}{1+m(\xi)}.
\end{equation*}
Since $u\in H^{s/2}$ we have $u\in L^q$ and $f(u)\in L^{\frac{q}{p}}$ for all $q\in [2,\frac{2}{1-s}]$. It follows that $\widehat{f(u)}\in L^q$ for all $\frac{2}{2-p(1-s)}\leq q\leq \infty$. By assumption (A), $(1+m(\cdot))^{-1}\in L^q$ for all $q>1/s$. Let $\varepsilon>0$ be a number such that $\frac{2}{1+s+\varepsilon}\geq 1$. We have
\begin{equation}
\label{eq: Lp iterative inequalities}
\|\widehat{u}\|_{L^{2/(1+s+\varepsilon)}}^{2/(1+s+\varepsilon)}=\|\widehat{u}^{\frac{2}{1+s+\varepsilon}}\|_{L^1}\leq \|\widehat{f(u)}^{\frac{2}{1+s+\varepsilon}}\|_{L^r}\|(1+m(\cdot))^{-\frac{2}{1+s+\varepsilon}}\|_{L^{r'}},
\end{equation}
where $r,r'>0$ are such that $1/r+1/{r'}=1$. We choose the smallest $r$ for which we can guarantee the first term on the right-hand side is finite. That is $\frac{2r}{1+s+\varepsilon}=\frac{2}{2-p(1-s)}$, which gives $r=\frac{1+s+\varepsilon}{2-p(1-s)}$. Then $r'=\frac{1+s+\varepsilon}{(1-s)(p-1)+\varepsilon}$. In order for $\|(1+m(\cdot))^{-\frac{2}{1+s+\varepsilon}}\|_{L^{r'}}$ to be finite we need
\begin{equation*}
\frac{2}{(1-s)(p-1)+\varepsilon}>\frac{1}{s},
\end{equation*}
which gives the inequality $(1-s)(p-1)+\varepsilon<2s$ to be satisfied. We claim that $\varepsilon>0$ can always be chosen such that this holds. For $p$ minimal, i.e. close to $1$, $\varepsilon\leq s$ is sufficient. If $p$ is large, let $\delta$ be the distance from $p$ to $\frac{1+s}{1-s}$. Then we get $2s+\varepsilon-(1-s)\delta<2s$, which is satisfied when $\varepsilon<(1-s)\delta$. Assuming $\varepsilon>0$ is appropriately chosen we get $\widehat{u}\in L^{2/(1+s+\varepsilon)}$ which implies $u\in L^{2/(1-s-\varepsilon)}$, and so $f(u)\in L^q$ for $1\leq q\leq \frac{2}{p(1-s-\varepsilon)}$ and $\widehat{f(u)}\in L^q$ for $\frac{2}{2-p(1-s-\varepsilon)}\leq q\leq \infty$. Inserting $2\varepsilon$ instead of $\varepsilon$ in \eqref{eq: Lp iterative inequalities} and repeating the procedure by choosing $r$ minimal with respect to the new lower bound on $q$ for the $\widehat{f(u)}\in L^q$ we get the inequality $(1-s)(p-1)+\varepsilon(2-p)<2s$ to be satisfied, which is already guaranteed by the first step since $2-p<1$. In general we get the inequality $(1-s)(p-1)+\varepsilon(n+1-np)<2s$ to be satisfied after $n$ iterations. By iterating the procedure enough times, we get $\widehat{u}\in L^1$ and thus $u\in L^\infty$.
\end{proof}

\begin{theorem}
\label{thm: regularity}
If $f$ satisfies (B) with $p\in (1, \frac{1+s}{1-s})$, then any solution $u\in H^{s/2}$ of \eqref{eq: main equation2}, \eqref{eq: main equationZ} or \eqref{eq: secondary eq 2} is in $H^{s}$.
\end{theorem}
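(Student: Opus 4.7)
The plan is to bootstrap from Lemma \ref{lem: regularity}, which already provides $u \in L^\infty$. The only new ingredient needed is a uniform Fourier-multiplier bound, supplied directly by Assumption (A).

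First I would observe that since $u \in H^{s/2} \subset L^2$ and $u \in L^\infty$ by Lemma \ref{lem: regularity}, the nonlinearity satisfies the pointwise bound
\begin{equation*}
|f(u)| \leq |c_p|\,|u|^p \leq |c_p|\,\|u\|_{L^\infty}^{p-1}|u|,
\end{equation*}
so that $f(u) \in L^2$ and consequently $\widehat{f(u)} \in L^2$. Taking the Fourier transform of any of \eqref{eq: main equation2}, \eqref{eq: main equationZ}, \eqref{eq: secondary eq 2} gives, after normalising the constant depending on $c$ to $1$ (this is harmless since $c>0$), the identity
\begin{equation*}
\widehat{u}(\xi) = \frac{\widehat{f(u)}(\xi)}{1+m(\xi)}.
\end{equation*}

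Next I would estimate the multiplier $M(\xi) := (1+|\xi|^2)^{s/2}/(1+m(\xi))$. For $|\xi|\geq 1$, Assumption (A) gives $1+m(\xi) \geq A_1|\xi|^s$, while $(1+|\xi|^2)^{s/2} \leq 2^{s/2}|\xi|^s$, hence $M(\xi) \leq 2^{s/2}/A_1$. For $|\xi|\leq 1$, $M(\xi) \leq 2^{s/2}$. Thus $M \in L^\infty(\R)$ with a bound depending only on $s$ and $A_1$.

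Combining the two observations,
\begin{equation*}
\|u\|_{H^s}^2 = \int_\R (1+|\xi|^2)^s |\widehat{u}(\xi)|^2\dxi = \int_\R M(\xi)^2 |\widehat{f(u)}(\xi)|^2\dxi \leq \|M\|_{L^\infty}^2 \|f(u)\|_{L^2}^2 < \infty,
\end{equation*}
which gives $u \in H^s$. The only delicate point is ensuring that $f(u) \in L^2$, which is precisely what Lemma \ref{lem: regularity} is designed to supply, so no real obstacle remains; everything else reduces to the elementary multiplier estimate coming from \eqref{assumption on m}.
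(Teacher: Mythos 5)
Your argument is correct and follows the same route as the paper: Fourier-transform the travelling-wave equation, use Lemma \ref{lem: regularity} together with Plancherel to get $\widehat{f(u)}\in L^2$, and then invoke the lower bound $m(\xi)\geq A_1|\xi|^s$ from Assumption (A) to conclude $(1+|\xi|^2)^{s/2}\widehat{u}\in L^2$. You merely spell out the elementary multiplier bound that the paper leaves implicit in the step ``$(1+m(\xi))\widehat{u}\in L^2$ which implies $u\in H^{s}$''.
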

\begin{proof}
We apply the Fourier transform on both sides of \eqref{eq: main equation2}, \eqref{eq: main equationZ} or \eqref{eq: secondary eq 2} and obtain
\begin{equation}
\label{eq: regularity}
(k+m(\xi))\widehat{u}=\widehat{f(u)},
\end{equation}
for some $k>0$. As in Lemma \ref{lem: regularity} we assume, without loss of generality, that $k=1$ and furthermore that $c_p=1$. By Plancherel's Theorem and Lemma \ref{lem: regularity} we have
\begin{equation*}
\|\widehat{f(u)}\|_{L^2}=\|f(u)\|_{L^2}\leq \|u\|_{L^\infty}^{p-1}\|u\|_{L^2}<\infty,
\end{equation*}
and so, by \eqref{eq: regularity}, $(1+m(\xi))\widehat{u}\in L^2$ which implies $u\in H^{s}$.
\end{proof}

\appendix
\section*{Appendix: The necessity of a continuity assumption on the symbol $m$}
\label{appendix}

In this appendix we prove that assuming only that $m$ satisfies \eqref{assumption on m}, the existence of solitary wave solutions of either \eqref{eq: main equation} or \eqref{eq: main equation 2} cannot in general be proved using the method of concentration compactness as in the previous sections. This will be done by providing a counter example, where Dichotomy cannot be precluded for minimizing sequences.

Let $E\subset \R_+$ be a closed nowhere dense set of non-zero measure such that every point of $E$ is a limit point of the set. Such a set can be made by constructing a fat Cantor set $C\subset [0,1]$ with measure $1-\alpha\in (0,1)$ and let $E$ be a union of disjoint translates of $C$. To be precise, we first remove from $[0,1]$ the open interval with centre $1/2$ and length $\alpha/2$. In the next step, we remove from each of the two remaining closed intervals the middle open interval of length $\alpha/8$. At the n-th step one removes from each remaining closed interval the middle open intervals of length $\alpha/(2^{2n-1})$. What remains in the limit is called the (fat) Cantor set of measure $1-\alpha$. If we define $m$ by
\begin{equation*}
m(\xi):=\left\{
	\begin{array}{l l}
		A_2|\xi|^s, & \quad \text{if}\,\, \xi\in \R\setminus E,\\
		A_1|\xi|^s, & \quad \text{if}\,\, \xi\in E
	\end{array} \right.
\end{equation*}
for any $s>0$, then $m$ satisfies \eqref{assumption on m}. Assume that $u\in \mathcal{S}(\R)$, the space of smooth, rapidly decreasing functions. Let $\varphi_r$ be as in Theorem \ref{thm: dichotomy convergence}. Then $L(\varphi_ru)-\varphi_rL u\in L^2$ and thus
\begin{align*}
\int_{\R}\varphi_r u(L(\varphi_ru) & -\varphi_rL u)\dx\\
& =\int_{\R}\left(m(\xi)(\widehat{\varphi_r}\ast \widehat{u})(\xi)-(\widehat{\varphi_r}\ast m\widehat{u})(\xi)\right)\overline{\widehat{\varphi_r u}}(\xi)\dxi.
\end{align*}
Using standard properties of the Fourier transform and convolutions, the right hand side can be written as
\begin{equation*}
\int_{\R}\int_{\R}\widehat{\varphi_r}(t)\widehat{u}(\xi-t)(m(\xi)-m(\xi-t))\, \mathrm{d}t\overline{\widehat{\varphi_r u}}(\xi)\dxi.
\end{equation*}
Consider now $\xi\in E$. For every $r>0$, the set $E^c\cap [\xi-1/r,\xi+1/r]$ has non-zero measure, where $E^c=\R\setminus E$. This can be seen by taking any point $x\in E^c\cap[\xi-1/r,\xi+1/r]$ (such a point must exist since the closure of $E$ has empty interior); if there exists no interval 
\begin{equation*}
(x-\varepsilon,x+\varepsilon)\subset E^c\cap[\xi-1/r,\xi+1/r]
\end{equation*}
then $x$ is in the closure of $E$ and, moreover, $(\xi-1/r,\xi+1/r)\subset \overline{E}$, which is a contradiction since $E$ is nowhere dense. Let $\mathcal{L}$ be the Lebesgue measure on the real line. We claim that there exists a constant $k=k(\alpha)\in (0,2)$ independent of $\xi$ and $r$ such that
\begin{equation*}
\mathcal{L}\left( E^c\cap [\xi-1/r,\xi+1/r]\right) \geq k(\alpha)/r \,\, \text{for all} \,\, \xi\in E \,\, \text{and all} \,\, r>1.
\end{equation*}
This means the ratio
\begin{equation*}
\frac{\mathcal{L}\left( E^c\cap [\xi-1/r,\xi+1/r]\right)}{\mathcal{L}\left( [\xi-1/r, \xi+1/r\right)}
\end{equation*}
is bounded from below; i.e. a minimum portion of the interval in the sense of measure always belongs to $E^c$. To see that this claim is true, let $\mathcal{P}$ be the collection of all open intervals deleted from $[0,1]$ in the construction of the Cantor set and their translates according the definition of $E$. Let $\mathcal{P'}$ denote the corresponding collection of closed sets remaining at each step of the construction. Let $\xi$ be given. If there is a $P\in \mathcal{P}$ such that, say, $\mathcal{L}\left( P\cap [\xi-1/r,\xi+1/r]\right)>1/(2r)$ the result is satisfied by $k(\alpha)=1/2$. Assume therefore that this is not the case. By assumption there exists a point $\tau_1\in [\xi+1/(2r),\xi+1/r]$ that is a left endpoint of some $P'\in \mathcal{P'}$. Similarly, there is a point $\tau_2\in [\xi-1/r,\xi-1/(2r)]$ that is the right endpoint of some $P'\in \mathcal{P'}$. Now $[\tau_1,\tau_2]\subseteq [\xi-1/r,\xi+1/r]$ and $\mathcal{L}\left( [\tau_1,\tau_2]\right)>1/r$. Furthermore, $[\tau_1,\tau_2]$ can by construction be written as a union of disjoint elements of $\mathcal{P'}$ and $\mathcal{P}$. Clearly, $\mathcal{L}\left( E^c\cap P\right)=\mathcal{L}(P)$ and by the construction of the Cantor set, $\mathcal{L}\left(E^c\cap P'\right)=\alpha \mathcal{L}(P')$. Thus $\mathcal{L}\left( E^c\cap [\tau_1,\tau_2]\right)>\alpha/r$ and therefore our claim holds with $k(\alpha)=\alpha$.

Observe next that $m(\xi)-m(\xi-t)\leq -(A_2-A_1)|\xi|^s+A_2|1/r|^s$ when $t\in (E^c-\xi)\cap[-1/r,1/r]$. Furthermore, we may assume that $\widehat{u}\geq M$ for some $M>0$ in all intervals $[\xi-1/r,\xi+1/r]$ where $\xi\in E$ for $r$ sufficiently large. Thus
\begin{align*}
\int_{(E^c-\xi)\cap[-1/r,1/r]}& \widehat{\varphi_r}(t)\widehat{u}(\xi-t)(m(\xi-t)-m(\xi))\, \mathrm{d}t \nonumber \\
&\geq M(A_2-A_1)|\xi|^s\int_{(E^c-\xi)\cap[-1/r,1/r]}\widehat{\varphi_r}(t)\, \mathrm{d}t \nonumber \\
&=M(A_2-A_1)|\xi|^s\int_{r\left((E^c-\xi)\cap[-1/r,1/r]\right)}\widehat{\varphi}(x)\dx \nonumber \\
&\geq N(A_2-A_1)|\xi|^s,
\end{align*}
for some constant $N>0$ that does not depend on $r$. This follows since the measure of the area of integration in line three is greater than $k(\alpha)$ and so the integral of $\widehat{\varphi}$ over the set is non-zero. Hence
\begin{align*}
\liminf_{r\rightarrow\infty} &\left|\int_{E}\int_{(E^c-\xi)\cap[-1/r,1/r]} \widehat{\varphi_r}(t)\widehat{u}(\xi-t)(m(\xi)-m(\xi-t))\, \mathrm{d}t\overline{\widehat{\varphi_r u}}(\xi)\dxi\right| \nonumber \\
& \geq N(A_2-A_1)\left|\int_{E}|\xi|^s\overline{\widehat{\varphi_r u}}(\xi)\dxi\right|>0.
\end{align*}
Assume now that dichotomy occurs for a minimizing sequence $\lbrace u_n\rbrace_n$. In Lions' \cite{lions} construction of the sequences in the case of dichotomy $u_n^{(1)}=\varphi_{R_1}u_n$ and $u_n^{(2)}=\psi_{r_n}u_n$ for suitably large $R_1$ and $r_n\rightarrow \infty$, where $\varphi_r$, $\psi_r$ are defined as in Theorem \ref{thm: dichotomy convergence}. We have
\begin{align*}
\J(u_n)= & \J(u_n^{(1)})+\J(u_n^{(2)})+\frac{1}{2}\int_{\R}\eta_{r_n}\left(uL u+\kappa u^2\right)\dx \nonumber \\
&-\int_{\R}\varphi_{R_1} u_n(L(\varphi_{R_1}u_n)-\varphi_{R_1}L u_n)\dx \nonumber \\
&-\int_{\R}\psi_{r_n} u_n(L(\psi_{r_n}u_n)-\psi_{r_n}L u_n)\dx,
\end{align*}
where $\eta_{r_n}=\chi_{\R}-\psi_{r_n}^2-\varphi_{R_1}^2$. For any $\varepsilon>0$ the last term can be taken to be smaller than $\varepsilon$ in absolute value by choosing $r_n$ big enough according to Theorem \ref{thm: dichotomy convergence}, as continuity of $m$ was not used in that part of the proof. By assumption, $\int_{\R} \eta_{r_n}|u|^{p+1}\dx\leq \varepsilon$ for $n$ sufficiently large. If $u_n$ is, say, a rapidly decreasing function so that $L u\in L^{(p+1)/p}$ then H\"older's inequality implies that $\frac{1}{2}\int_{\R}\eta_{r_n}\left( c_1 uL u+c_2 u^2\right)\dx$ goes to zero as $\varepsilon$ goes to zero. However, as our calculations above show, the term  $\int_{\R}\varphi_{R_1} u_n(L(\varphi_{R_1}u_n)-\varphi_{R_1}L u_n)\dx$ can in general be bounded away from zero for all choices of $R_1$ for elements $u_n\in H^{s/2}$.


\bibliographystyle{plain}
\bibliography{references}

\end{document}